\documentclass{amsart}
\usepackage{oldgerm}
\usepackage{latexsym}
\usepackage{amsmath}
\usepackage{amscd}
\usepackage{amssymb} 
\usepackage{amsmath}
\usepackage{amsthm}
\usepackage{latexsym}


%
%


\def\(#1;#2){\left\{\begin{array} {l} #1 \\ #2 \end{array}\right.}

\def\listtwo(#1;#2;#3;#4){\left\{\begin{array}{ll} #1 & #2 \\ #3 &#4
			       \end{array}\right.}
			       
\def\listthree(#1;#2;#3;#4;#5;#6){\left\{\begin{array}{ll} #1 & #2 \\ #3 &#4
			        \\ #5 & #6 \end{array}\right.}
			       
\def\smallmattwo(#1;#2;#3;#4){\left(\begin{smallmatrix} #1 & #2 \\ #3 &#4
			       \end{smallmatrix}\right)}

\def\ssmallmattwo(#1;#2;#3;#4){\left.\begin{smallmatrix} #1 & #2 \\ #3 &#4
			       \end{smallmatrix}\right.}
			       
\def\mattwo(#1;#2;#3;#4){\left(\begin{array}{cc} #1 & #2 \\ #3 &#4
			       \end{array}\right)}

\def\(#1;#2){\left\{\begin{array} {l} #1 \\ #2 \end{array}\right.}

\def\vecttwo(#1;#2){\left(\begin{array}{c} #1 \\ #2
\end{array}\right)} 

\def\matthree(#1;#2;#3;#4;#5;#6;#7;#8;#9){\left(\begin{array}{ccc} 
#1 & #2 & #3 \\
 #4 & #5 &#6 \\
 #7 & #8 &#9
\end{array}\right)}
			       
\def\smallmatthree(#1;#2;#3;#4;#5;#6;#7;#8;#9){\left(\begin{smallmatrix}{ccc} 
#1 & #2 & #3 \\
 #4 & #5 &#6 \\
 #7 & #8 &#9
\end{smallmatrix}\right)}
				       
\def\vecttwo(#1;#2){\left(\begin{array}{c} #1 \\ #2
\end{array}\right)} 

\def\vectthree(#1;#2;#3){\left(\begin{array}{c} #1 \\ #2 \\ #3
\end{array}\right)}


\theoremstyle{plain} 

\newtheorem{thm}{Theorem}[section] 
\newtheorem{lem}[thm]{Lemma}
\newtheorem{prop}[thm]{Proposition}

\newtheorem{thms}{Theorem}[subsection] 
\newtheorem{lems}[thms]{Lemma}
\newtheorem{props}[thms]{Proposition}

{Corollary}[section]
\newtheorem*{xcor}{Corollary} 








\theoremstyle{definition}





\theoremstyle{remark} 


\newtheorem*{acknow}{Acknowledgments} 


\begin{document}
  
\title [Koecher-Maa{\ss} series  of the Ikeda lift for $U(m,m)$]
{Koecher-Maa{\ss} series  of the Ikeda lift for $U(m,m)$ }

\author [H. Katsurada] {Hidenori KATSURADA \\ 
In  memory of Professor Hiroshi Saito}

\date{}

\maketitle

\begin{abstract}
Let $K={\bf Q}(\sqrt{-D})$ be an imaginary quadratic field with discriminant $-D,$  and $\chi$ the Dirichlet character corresponding to the extension $K/{\bf Q}.$ 
Let $m=2n$ or $2n+1$ with $n$ a positive integer. Let $f$ be a primitive form of weight $2k+1$ and character $\chi$ for $\varGamma_0(D),$ 
  or a primitive form of weight $2k$ for $SL_2({\bf Z})$ according as $m=2n,$ or $m=2n+1.$ For such an $f$ let $I_m(f)$ be the lift of $f$ to the space of Hermitian modular forms constructed by Ikeda.
  We then give an explicit formula of the Koecher-Maass series $L(s,I_m(f))$ of $I_m(f).$ 
  This is a generalization of Mizuno \cite{Mi}.

\end{abstract}

\footnote[0]{2010 {\it{Mathematics Subject Classification}} Primary 11F55, 11F67,
.}

\section{Introduction} 
In \cite{Mi}, Mizuno gave  explicit formulas  of the Koecher-Maass series  of the Hermitian Eisenstein series of degree two and of the Hermitian Maass lift. In this paper, we give an explicit formula of the Koecher-Maass series of the Hermitian Ikeda lift.  
Let $K={\bf Q}(\sqrt{-D})$ be an imaginary quadratic field with discriminant $-D.$ Let $\mathcal O$ be the ring of integers in $K,$ and $\chi$ the Kronecker character corresponding to the extension $K/{\bf Q}.$ For a non-degenerate Hermitian matrix or alternating matrix $T$ with entries in $K,$ let ${\mathcal U}_T$ be the unitary group defined over ${\bf Q},$ whose group ${\mathcal U}_T(R)$ of $R$-valued points is given by
$${\mathcal U}_T(R)=\{ g \in GL_{m}(R \otimes K) \ | \ {}^t\overline{g}Tg = T \}$$
for any ${\bf Q}$-algebra $R,$ where $\overline {g}$ denotes the automorphism of $M_n(R \otimes K)$ induced by the non-trivial automorphism of $K$ over ${\bf Q}.$ 
We also define the special unitary group $\mathcal {SU}_T$ over ${\bf Q}_p$ by $\mathcal {SU}_T={\mathcal U}_T \cap R_{K/{\bf Q}}(SL_m),$ where $R_{K/{\bf Q}}$ is the Weil restriction. In particular we write ${\mathcal U}_T$ as ${\mathcal U}^{(m)}$ or $U(m,m)$ if $T=\smallmattwo(O;-1_m;1_m;O).$ For a more precise description of ${\mathcal U}^{(m)}$ see Section 2. Put $\varGamma_K^{(m)}=U(m,m)({\bf Q}) \cap GL_{2m}(\mathcal O).$ 
For a modular form $F$ of weight $2l$ and character $\psi$ for $\varGamma_K^{(m)}$ we define the Koecher-Maass series
$L(s,F)$ of $F$ by
$$L(s,F)=\sum_{T} {c_F(T) \over e^*(T) (\det T)^s},$$
where $T$ runs over all $SL_m({\mathcal O})$-equivalence classes of positive definite semi-integral Hermitian matrices of 
degree $m,$ $c_F(T)$ denotes the $T$-th Fourier coefficient of $F,$ and $e^*(T)=\#(\mathcal {SU}_T({\bf Q}) \cap SL_m({\mathcal O})).$

Let $k$ be a non-negative integer. Then for a primitive form $f \in {\textfrak S}_{2k+1}(\varGamma_0(D),\chi)$  Ikeda \cite{Ik3} constructed a lift $I_{2n}(f)$ of $f$ to the space of modular forms of weight $2k+2n$ and a character $\det^{-k-n}$ for $\varGamma_K^{(2n)}.$ This is a generalization of the Maass lift considered by Kojima \cite{Koj}, Gritsenko \cite{G}, Krieg \cite{Kri} and Sugano \cite{Su}. Similarly for a primitive form $f \in {\textfrak S}_{2k}(SL_2({\bf Z}))$  he  constructed a lift $I_{2n+1}(f)$ of $f$ to the space of modular forms of weight $2k+2n$ and a character $\det^{-k-n}$ for $\varGamma_K^{(2n+1)}.$ For the rest of this section, let $m=2n$ or $m=2n+1.$ We then call $I_{m}(f)$  the Ikeda lift of $f$ for $U(m,m)$ or the Hermitian Ikeda lift of degree $m.$ Ikeda also showed that the automorphic form $Lift^{(m)}(f)$  on the adele group ${\mathcal U}^{(m)}({\bf A})$ associated with $I_{m}(f)$ is a cuspidal Hecke eigenform whose standard $L$-function coincides with  
$$\prod_{i=1}^{m} L(s+k+n-i+1/2,f)L(s+k+n-i+1/2,f,\chi),$$ where $L(s+k+n-i+1/2,f)$ is the Hecke $L$-function of $f$ and $L(s+k+n-i+1/2,f,\chi)$ is its "modified  twist" by $\chi.$ For the precise definition of $L(s+k+n-i+1/2,f,\chi)$ see Section 2. We also call $Lift^{(m)}(f)$ the adelic Ikeda lift of $f$ for $U(m,m).$ Then we express the Kocher-Maass series of $I_m(f)$ in terms of the $L$-functions related to $f.$ 
 This  result was already obtained in the case $m=2$ by Mizuno \cite{Mi}.  

 The method we use is similar to that in the proof of the main result of \cite{I-K2} or \cite{I-K3}. We explain it more precisely. In Section 3, we reduce our computation to  a computation of certain formal power series $\hat   P_{m,p}(d;X,t)$ in  $t$ associated with local Siegel series similarly to \cite{I-K2}  (cf. Theorem 3.4 and Section 5). 

Section 4 is devoted to the computation of them. This computation is similar to that in \cite{I-K2}, but we should be careful in dealing with the case where $p$ is ramified in $K$. After such an elaborate computation, we can get explicit formulas of $\hat P_{m,p}(d;X,t)$ for all prime numbers $p$ (cf. Theorems 4.3.1, 4.3.2, and 4.3.6). In Section 5, by using explicit formulas for $\hat P_{m,p}(d;X,t)$, we immediately get an explicit formula of $L(s,I_m(f)).$ 

Using the same argument as in the proof our main result,  we can give an explicit formula of the Koecher-Maass series of the Hermitian Eisenstein series of any degree, which can be regarded as a zeta function of a certain prehomogeneous vector space. We also note that  the method used in this paper is useful for giving an explicit formula for the Rankin-Selberg series of the Hermitian Ikeda lift, and as a result we can prove  the period relation of the Hermitian Ikeda lift,  which was conjectured by Ikeda \cite {Ik3}. We will discuss these topics in  subsequent papers \cite{Kat4} and \cite{Kat5}.

\begin{acknow}
The author thanks Professor T. Watanabe and Professor R. Schulze-Pillot for giving him many crucial comments on  the mass formula for the unitary group.
He also thanks Professor Y. Mizuno, Professor T. Ikeda, and A. Yukie for useful discussions.
The author was partly supported by JSPS KAKENHI Grant Number 24540005.  \end{acknow}

\bigskip
  
\qquad {\bf Notation.}  Let $R$ be a commutative ring. We denote by $R^{\times}$ and $R^*$  the semigroup of non-zero elements of $R$ and the unit group of $R,$  respectively. For a subset $S$ of $R$ we denote by $M_{mn}(S)$ the set of
$(m,n)$-matrices with entries in $S.$ In particular put $M_n(S)=M_{nn}(S).$ 
  Put $GL_m(R) = \{A \in M_m(R) \ | \ \det A \in R^* \},$ where $\det
A$ denotes the determinant of a square matrix $A$. Let $K_0$ be a field, and  $K$ a quadratic extension of $K_0,$ or $K=K_0 \oplus K_0.$ In the latter case, we regard $K_0$ as a subring of $K$ via the diagonal embedding.  We also identify $M_{mn}(K)$ with $M_{mn}(K_0) \oplus M_{mn}(K_0)$ in this case.
If $K$ is a quadratic extension of $K_0,$ let  $\rho$ be the non-trivial automorphism of $K$ over $K_0,$ and if $K=K_0 \oplus K_0,$ let $\rho$ be the automorphism of $K$ defined by $\rho(a,b)=(b,a)$ for $(a,b) \in K.$ We sometimes write $\overline {x}$ instead of $\rho(x)$ for $x \in K$ in both cases. Let $R$ be a subring of $K.$ For an $(m,n)$-matrix $X=(x_{ij})_{m \times n}$ write $X^*=(\overline{x_{ji}})_{n \times m},$ and for an $(m,m)$-matrix
$A$, we write $A[X] = X^* A X.$ Let ${\rm Her}_n(R)$ denote
the set of Hermitian  matrices of degree $n$ with entries in
$R$, that is the subset of $M_n(R)$ consisting of matrices $X$ such that $X^*=X.$   Then a Hermitian matrix $A$ of degree $n$ with entries in $K$ is said to be semi-integral over $R$  if ${\rm tr}(AB) \in K_0 \cap R$ for any $B \in {\rm Her}_n(R),$ where ${\rm tr}$ denotes the trace of a matrix. We denote by $\widehat{{\rm Her}}_n(R)$ the set of semi-integral matrices of degree $n$ over $R.$ 

  For a subset $S$ of
$M_n(R)$ we denote by $S^{\times}$ the subset of $S$
consisting of non-degenerate matrices. If $S$ is a subset of ${\rm Her}_n({\bf C})$ with ${\bf C}$ the field of complex  numbers, we denote by $S^+$ the subset of $S$ consisting of positive definite matrices. The group 
  $GL_n(R)$ acts on the set ${\rm Her}_n(R)$ in the following way:
  $$ GL_n(R) \times {\rm Her}_n(R) \ni (g,A) \longrightarrow g^* Ag \in {\rm Her}_n(R).$$
  Let $G$ be a subgroup of $GL_n(R).$ For a $G$-stable subset ${\mathcal B}$ of ${\rm Her}_n(R)$ we denote by ${\mathcal B}/G$ the set of equivalence classes of ${\mathcal B}$ under the action of $G.$ We sometimes identify ${\mathcal B}/G$ with a complete set of representatives of ${\mathcal B}/G.$ We abbreviate ${\mathcal B}/GL_n(R)$ as ${\mathcal B}/\sim$ if there is no fear of confusion. Two Hermitian matrices $A$ and $A'$ with
entries in $R$ are said to be $G$-equivalent and write $A \sim_{G} A'$ if there is
an element $X$ of $G$ such that $A'=A[X].$ 
For square matrices $X$ and $Y$ we write $X \bot Y = \mattwo(X;O;O;Y).$

We put ${\bf e}(x)=\exp(2 \pi \sqrt{-1} x)$ for $x \in {\bf C},$ and for a prime number $p$ we denote by ${\bf e}_p(*)$ the continuous additive character of ${\bf Q}_p$ such that ${\bf e}_p(x)= {\bf e}(x)$ for $x \in {\bf Z}[p^{-1}].$ 

For a prime number $p$ we denote by ${\rm ord}_p(*)$ the additive valuation of ${\bf Q}_p$ normalized so that ${\rm ord}_p(p)=1,$ and put  $|x|_p=p^{-{\rm ord}_p(x)}.$ Moreover we denote by $|x|_{\infty}$ the absolute value of $x \in {\bf C}.$
Let $K$ be an imaginary quadratic field, and $\mathcal O$ the ring of integers in $K.$  For a prime number $p$ put $K_p=K \otimes {\bf Q}_p,$ and ${\mathcal O}_p={\mathcal O} \otimes {\bf Z}_p.$ Then $K_p$ is a quadratic extension of ${\bf Q}_p$ or $K_p \cong {\bf Q}_p \oplus {\bf Q}_p.$  In the former case, for  $x \in K_p,$ we denote by $\overline {x}$ the conjugate of $x$ over ${\bf Q}_p.$ In the latter case, we identify $K_p$ with ${\bf Q}_p \oplus {\bf Q}_p,$ and for $x=(x_1,x_2)$ with $x_i \in {\bf Q}_p,$ we put $\overline {x}=(x_2,x_1).$ For $x \in K_p$ we define the norm $N_{K_p/{\bf Q}_p}(x)$ by $N_{K_p/{\bf Q}_p}(x)=x\overline {x},$ and 
 put $\nu_{K_p}(x)={\rm ord}_p(N_{K_p/{\bf Q}_p}(x)),$ and $|x|_{K_p}=|N_{K_p/{\bf Q}_p}(x)|_p.$ Moreover put $|x|_{K_{\infty}}=|x \overline{x}|_{\infty}$ for $x \in {\bf C}.$ 

\section{Main results }
For a positive integer $N$ let 
$$\varGamma_0(N)=\{ \mattwo(a;b;c;d) \in SL_2({\bf Z}) \ | \ c \equiv 0 \ {\rm mod} \ N  \},$$
and for a Dirichlet character $\psi$ mod $N,$ we denote by ${\textfrak M}_{l}(\varGamma_0(N),\psi)$ the space of modular forms of weight $l$ for 
$\varGamma_0(N)$ and nebentype $\psi,$ and by ${\textfrak S}_{l}(\varGamma_0(N),\psi)$ its subspace consisting of cusp forms. We simply write
 ${\textfrak M}_{l}(\varGamma_0(N),\psi)$ (resp. ${\textfrak S}_{l}(\varGamma_0(N),\psi)$)  as ${\textfrak M}_{l}(\varGamma_0(N))$ (resp. as ${\textfrak S}_{l}(\varGamma_0(N))$) if $\psi$ is the trivial character.   

Throughout the paper, we fix an imaginary quadratic extension $K$ of ${\bf Q}$ with discriminant $-D,$ and denote by ${\mathcal O}$ the ring of integers in $K.$  For such a $K$ let ${\mathcal U}^{(m)}=U(m,m)$ be the unitary group defined in Section 1. Put $J_m=\mattwo(O_m;-1_m;1_m;O_m),$ where $1_m$ denotes the unit matrix of degree $m.$ Then   
$${\mathcal U}^{(m)}({\bf Q})=\{M \in GL_{2m}(K)  \ | \  J_m[M]= J_m  \}.$$
 Put 
$$\varGamma^{(m)}=\varGamma_K^{(m)}={\mathcal U}^{(m)}({\bf Q}) \cap GL_{2m}({\mathcal O}). $$
  Let ${\textfrak H}_m$ be the Hermitian upper half-space defined by 
$${\textfrak H}_m=\{Z \in M_m({\bf C}) \ | \ {1 \over 2\sqrt{-1}} (Z-Z^*) \ {\rm is \ positive \ definite} \}.$$
The group ${\mathcal U}^{(m)}({\bf R})$ acts on ${\textfrak H}_m$ by
$$ g \langle Z \rangle =(AZ+B)(CZ+D)^{-1} \ {\rm for} \ g=\smallmattwo(A;B;C;D) \in {\mathcal U}^{(m)}({\bf R}), Z \in {\textfrak H}_m.$$
We also put $j(g,Z)=\det (CZ+D)$ for such $Z$ and $g.$
  Let $l$ be an integer. For a subgroup $\varGamma$ of ${\mathcal U}^{(m)}({\bf Q})$ commensurable with $\varGamma^{(m)}$ and a character $\psi$ of $\varGamma,$ we denote by ${\textfrak M}_{l}(\varGamma,\psi)$  the space of holomorphic modular forms of weight $l$  with character $\psi$ for $\varGamma.$   We denote by ${\textfrak S}_{l}(\varGamma,\psi)$ the subspace of ${\textfrak M}_{l}(\varGamma,\psi)$ consisting of cusp forms. In particular, if $\psi$ is the character of $\varGamma$ defined  by $\psi(\gamma)=(\det \gamma)^{-l}$ for $\gamma \in \varGamma,$ we write ${\textfrak M}_{2l}(\varGamma,\psi)$ as  ${\textfrak M}_{2l}(\varGamma,\det^{-l}),$ and so on.  
  Let $F(z)$ be an element of ${\textfrak M}_{2l}(\varGamma^{(m)},\det^{-l}).$ We then define the Koecher-Maass series $L(s,F)$ for $F$ by
  $$L(s,F)=\sum_{T \in {\widehat {\rm Her}_m({\mathcal O})}^+/SL_n(\mathcal O)} {c_{F}(T)  \over (\det T)^{s} e^*(T)},$$
 where  $c_F(T)$ denotes the $T$-th Fourier coefficient of $F,$ and $e^*(T)=\#(\mathcal {SU}_T({\bf Q}) \cap SL_m(\mathcal O)).$   
 
Now we consider the adelic modular form.  
Let ${\bf A}$ be the adele ring of ${\bf Q},$ and  ${\bf A}_{f}$ the non-archimedian factor of ${\bf A}.$ 
  Let $h=h_K$ be a class number of $K.$ 
Let $G^{(m)}={\rm Res}_{K/{\bf Q}}(GL_m),$ and $G^{(m)}({\bf A})$ be the adelization of $G^{(m)}.$ Moreover  put ${\mathcal C}^{(m)}=\prod_p GL_m({\mathcal O}_p).$ Let ${\mathcal U}^{(m)}({\bf A})$ be the adelization of ${\mathcal U}^{(m)}.$ We define the  compact subgroup $\mathcal K_0^{(m)}$ of ${\mathcal U}^{(m)}({\bf A}_f)$ by ${\mathcal U}^{(m)}({\bf A}) \cap \prod_p GL_{2m}({\mathcal O}_p),$ where $p$ runs over all rational primes. 
 Then we have
$${\mathcal U}^{(m)}({\bf A})=\bigsqcup_{i=1}^h {\mathcal U}^{(m)}({\bf Q}) \gamma_i \mathcal K_0 ^{(m)}{\mathcal U}^{(m)}({\bf R})$$
with some subset $\{\gamma_1,...,\gamma_h \}$ of ${\mathcal U}^{(m)}({\bf A}_f).$ We can take $\gamma_i $ as
$$\gamma_i =\mattwo(t_i;0;0;t_i^{* -1}),$$ 
where $\{ t_i \}_{i=1}^h =\{(t_{i,p}) \}_{i=1}^h $ is a certain subset of   $G^{(m)}({\bf A}_f)$ such that $t_1=1,$ and
 $$G^{(m)}({\bf A})=\bigsqcup_{i=1}^h G^{(m)}({\bf Q}) t_i G^{(m)}({\bf R}){\mathcal C}^{(m)}.$$
  Put $\varGamma_i={\mathcal U}^{(m)}({\bf Q}) \cap \gamma_i{\mathcal K}_0 \gamma_i^{-1}{\mathcal U}^{(m)}({\bf R}).$ Then for an element $(F_1,...,F_h) \in \bigoplus_{i=1}^h  {\textfrak M}_{2l}(\varGamma_i,\det^{-l}),$ we define $(F_1,...,F_h)^\sharp$ by
$$(F_1,...,F_h)^{\sharp}(g)=F_i(x\langle {\bf i} \rangle)j(x,{\bf i})^{-2l} (\det x)^l$$
for $g=u\gamma_ix \kappa$ with $u \in {\mathcal U}^{(m)}({\bf Q}),x \in {\mathcal U}^{(m)}({\bf R}),\kappa \in {\mathcal K}_0.$  We denote by ${\mathcal M}_{2l}({\mathcal U}^{(m)}({\bf Q}) \backslash {\mathcal U}^{(m)}({\bf A}), \det^{-l})$ the space of automorphic forms obtained in this way. We also put
$${\mathcal S}_{2l}({\mathcal U}^{(m)}({\bf Q}) \backslash {\mathcal U}^{(m)}({\bf A}),{\det}^{-l})=\{ (F_1,...,F_h)^{\sharp} \ | \ F_i \in {\textfrak S}_{2l}(\varGamma_i,{\det}^{-l}) \}.$$
We can define the Hecke operators which act on the space \\
${\mathcal M}_{2l}({\mathcal U}^{(m)}({\bf Q}) \backslash {\mathcal U}^{(m)}({\bf A}), \det^{-l}).$ For the precise definition of them, see \cite{Ik3}. 

Let $\widehat {\rm Her}_m({\mathcal O})$ be the set of semi-integral Hermitian matrices over ${\mathcal O}$ of degree $m$ as in the Notation. We note that $A$ belongs to  $\widehat {\rm Her}_m({\mathcal O})$ if and only if its diagonal components are rational integers and $\sqrt{-D} A \in {\rm Her}_m({\mathcal O}).$   For a non-degenerate Hermitian  matrix $B$ with entries in  $K_p$ of degree $m,$ put
 $\gamma(B)=(-D)^{[m/2]}\det B.$ 

 Let  
$\widehat {\rm Her}_m({\mathcal O}_p)$ be the set of semi-integral matrices over ${\mathcal O}_p$ of degree $m$ as in the Notation.@We put $\xi_p=1,-1,$ or
 $0$ according as $K_p = {\bf Q}_p \oplus {\bf Q}_p, K_p$ is
 an unramified quadratic extension of ${\bf Q}_p,$ or $K_p$
 is a  ramified quadratic extension of ${\bf Q}_p.$ 
For $T \in \widehat {\rm Her}_{m}({\mathcal O}_p)^{\times}$ we define the local Siegel series $b_p(T,s)$ by   $$b_p(T,s)=\sum_{R \in {\rm Her}_n(K_p)/{\rm Her}_n({\mathcal O}_p)} {\bf e}_p({\rm tr}(TR))p^{-{\rm ord}_p(\mu_p(R))s},$$ where $\mu_p(R)=[R{\mathcal O}_p^m+{\mathcal O}_p^m:{\mathcal O}_p^m].$  We remark that there exists a unique polynomial 
 $F_p(T,X)$ in $X$ such that 
 $$b_p(T,s)=F_p(T,p^{-s})\prod_{i=0}^{[(m-1)/2]}(1-p^{2i-s})\prod_{i=1}^{[m/2]} (1-\xi_p p^{2i-1-s}) $$ 
(cf. Shimura \cite{Sh1}). 
We then define a Laurent polynomial $\widetilde F_p(T,X)$ as
$$\widetilde F_p(T,X)=X^{-{\rm ord}_p(\gamma(T))}F_p(T,p^{-m}X^{2}).$$
We remark that we have 
$$\widetilde F_p(T,X^{-1})=(-D,\gamma(T))_p \widetilde F_p(T,X)  \qquad {\rm if } \ m \ {\rm is \ even},$$ 
$$\widetilde F_p(T,\xi_pX^{-1})=\widetilde F_p(T,X)  \qquad {\rm if } \ m \ {\rm is \ even \ and} \ p \nmid D,$$ 
and
$$\widetilde F_p(T,X^{-1})=\widetilde F_p(T,X)  \qquad {\rm if } \ m \ {\rm is \ odd}$$ (cf. \cite{Ik3}). Here $(a,b)_p$ is the Hilbert symbol of $a,b \in {\bf Q}_p^{\times}.$ Hence we have
$$\widetilde F_p(T,X)=(-D,\gamma(B))_p^{m-1} X^{{\rm ord}_p(\gamma(T))}F_p(T,p^{-m}X^{-2}).$$
Now we put
 $${\widehat {\rm Her}}_{m}({\mathcal O})_i^+=\{T \in {\rm Her}_m(K)^+ \ | \  t_{i,p}^*T t_{i,p} \in \widehat {\rm Her}_m({\mathcal O}_p) \ {\rm for \ any } \ p \}.$$

First let $k$ be a non-negative integer, and  $m=2n$  a positive even integer. Let 
$$f(z)=\sum_{N=1}^{\infty}a(N){\bf e}(Nz)$$
 be a  primitive form in ${\textfrak S}_{2k+1}(\varGamma_0(D),\chi).$  For a prime number $p$ not dividing $D$ let $\alpha_p \in {\bf C}$ such that $\alpha_p+\chi(p)\alpha_p^{-1}=p^{-k}a(p),$ and for $p \mid D$ put $\alpha_p=p^{-k}a(p).$  We note that $\alpha_p \not=0$ even if $p | D.$ Then for the Kronecker character $\chi$ we  define Hecke's $L$-function $L(s,f,\chi^i)$ twisted by $\chi^i$ as 
  $$L(s,f,\chi^i)=\prod_{p \nmid D}\{(1-\alpha_p p^{-s+k}\chi(p)^i)(1-\alpha_p^{-1} p^{-s+k} \chi(p)^{i+1})\}^{-1}$$
$$\times 
\left\{\begin{array}{ll}
\prod_{p \mid D} (1-\alpha_p p^{-s+k})^{-1} & {\rm if} \ i \ {\rm is \ even} \\
\prod_{p \mid D} (1-\alpha_p^{-1} p^{-s+k})^{-1} & {\rm if} \ i \ {\rm is \ odd}. \end{array} 
\right. $$
 In particular, if $i$ is even,  we sometimes write $L(s,f,\chi^i)$ as $L(s,f)$ as usual. Moreover for $i=1,...,h$ we define a Fourier series
 $$I_m(f)_i(Z)= \sum_{T \in \widehat {\rm Her}_{m}({\mathcal O})_i^+}a_{I_{m}(f)_i}(T){\bf e}({\rm tr}(TZ)),$$
 where
 $$a_{I_{2n}(f)_i}(T)= |\gamma (T)|^{k} \prod_p |\det (t_{i,p}) \det(\overline{t_{i,p}})|_p^{n}\widetilde F_p(t_{i,p}^*T t_{i,p},\alpha_p^{-1}).$$
Next let $k$ be a positive integer and $m=2n+1$  a positive odd integer. Let 
$$f(z)=\sum_{N=1}^{\infty}a(N){\bf e}(Nz)$$
 be a  primitive form in ${\textfrak S}_{2k}(SL_2({\bf Z})).$  For a prime number $p$ let $\alpha_p \in {\bf C}$ such that $\alpha_p+\alpha_p^{-1}=p^{-k+1/2}a(p).$ Then we  define Hecke's $L$-function $L(s,f,\chi^i)$ twisted by $\chi^i$ as   $$L(s,f,\chi^i)$$
  $$=\prod_{p }\{(1-\alpha_p p^{-s+k-1/2}\chi(p)^{i})(1-\alpha_p^{-1} p^{-s+k-1/2} \chi(p)^{i})\}^{-1}.$$
 In particular, if $i$ is even we write $L(s,f,\chi^i)$ as $L(s,f)$ as usual. Moreover  for $i=1,...,h$ we define a Fourier series
 $$I_{2n+1}(f)_i(Z)= \sum_{T \in \widehat {\rm Her}_{2n+1}({\mathcal O})_i^+} a_{I_{2n+1}(f)_i}(T){\bf e}({\rm tr}(TZ)),$$
 where
 $$a_{I_{2n+1}(f)_i}(T)= {|\gamma(T)|}^{k-1/2} \prod_p |\det(t_{i,p}) \det(\overline{t_{i,p}})|_p^{n+1/2}\widetilde F_p(t_{i,p}^*T t_{i,p},\alpha_p^{-1}).$$
 
 {\bf Remark.}  In \cite{Ik3}, Ikeda defined $\widetilde F_p(T,X)$ as 
$$\widetilde F_p(T,X)=X^{{\rm ord}_p(\gamma(T))}F_p(T,p^{-m}X^{-2}),$$
and we define it by replacing $X$ with $X^{-1}$ in this paper. This change does not affect the results.

 Then Ikeda \cite{Ik3} showed  the following: 
 
 \bigskip

\begin{thm}
Let $m=2n$ or $2n+1.$ Let  $f$ be a primitive form in 
${\textfrak S}_{2k+1}(\varGamma_0(D),\chi)$ or in ${\textfrak S}_{2k}(SL_2({\bf Z}))$ according as $m=2n$ or $m=2n+1.$ Moreover let $\varGamma_i$ be the subgroup of ${\mathcal U}^{(m)}$ defined as above. Then $I_m(f)_i(Z)$ is an element of ${\textfrak S}_{2k+2n}(\varGamma_i,\det^{-k-n})$ for any $i.$ In particular, $I_m(f):=I_m(f)_1$ is an element of ${\textfrak S}_{2k+2n}(\varGamma^{(m)},\det^{-k-n}).$
 \end{thm}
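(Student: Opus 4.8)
The plan is to verify the three defining properties of a Hermitian cusp form in turn: well-definedness of the Fourier coefficients as functions on equivalence classes, absolute convergence of the series, and the modular transformation law under $\varGamma_i$. The first two are essentially formal, while the third is where the real work lies.

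First I would check that $a_{I_m(f)_i}(T)$ depends only on the $GL_m({\mathcal O})$-equivalence class of $T$, up to the determinant factor that produces the character $\det^{-k-n}$. Since the local Siegel series $b_p(T,s)$ --- and hence $F_p(T,X)$ and $\widetilde F_p(T,X)$ --- is an invariant of the $GL_m({\mathcal O}_p)$-class of $T$, and since $\gamma(T)=(-D)^{[m/2]}\det T$ changes only by a unit norm factor under $T \mapsto T[U]$ with $U$ unimodular, the product $\prod_p |\det(t_{i,p})\det(\overline{t_{i,p}})|_p^{\bullet}\,\widetilde F_p(t_{i,p}^{*}Tt_{i,p},\alpha_p^{-1})$ transforms in exactly the way required for compatibility with the action of the Levi part $GL_m$ of the Siegel parabolic. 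This is what makes the Fourier expansion consistent with the action of the triangular generators of $\varGamma_i$.

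Next, for convergence and holomorphy I would estimate the Fourier coefficients. The factor $|\gamma(T)|^{k}$ (resp.\ $|\gamma(T)|^{k-1/2}$) supplies the expected polynomial growth in $\det T$, while the Deligne bound $|\alpha_p|=1$ at $p \nmid D$ (together with the explicit value of $\alpha_p$ at ramified primes) and standard Kitaoka-type estimates for $\widetilde F_p$ control the remaining product. Combining these shows that $I_m(f)_i(Z)$ converges absolutely and locally uniformly on ${\textfrak H}_m$, hence defines a holomorphic function; because only positive definite $T$ occur in the sum, once modularity is known $I_m(f)_i$ will automatically be a cusp form.

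The hard part is the modular transformation law, i.e.\ invariance of $I_m(f)_i$ under all of $\varGamma_i$ rather than just its parabolic subgroup. Here I would follow Ikeda's adelic strategy: transfer the problem to the assertion that the adelization $Lift^{(m)}(f)$ is a genuine automorphic form on ${\mathcal U}^{(m)}({\bf A})$, constructed so that its local components realize the prescribed Satake parameters $\alpha_p$ and its global Fourier coefficients are the products $\prod_p \widetilde F_p(\,\cdot\,,\alpha_p^{-1})$. Modularity is then obtained by exhibiting $Lift^{(m)}(f)$ through the theta/doubling correspondence (or, equivalently, by an inductive Fourier--Jacobi argument establishing the Maass-type relations among the Fourier coefficients), and by checking that its standard $L$-function equals $\prod_{i=1}^{m} L(s+k+n-i+1/2,f)L(s+k+n-i+1/2,f,\chi)$. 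The functional equations $\widetilde F_p(T,X^{-1})=(-D,\gamma(T))_p\,\widetilde F_p(T,X)$ for $m$ even and $\widetilde F_p(T,X^{-1})=\widetilde F_p(T,X)$ for $m$ odd, recorded above, are precisely the local inputs needed to match these $L$-factors and to see that the character is $\det^{-k-n}$. This step is the genuine obstacle: it is the substance of Ikeda's construction and rests on the full strength of the theta correspondence rather than on any short computation.
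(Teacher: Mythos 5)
This theorem is stated in the paper without proof: it is Ikeda's result, quoted verbatim from \cite{Ik3}, so there is no ``paper's own proof'' to match your argument against. Your first two steps (invariance of $\widetilde F_p(T,X)$ under $GL_m({\mathcal O}_p)$-equivalence of $T$, hence consistency of $a_{I_m(f)_i}(T)$ with the parabolic subgroup, and convergence via the Deligne bound plus polynomial estimates on the Siegel series) are correct but routine, and they only establish that $I_m(f)_i$ is a holomorphic function invariant under the Siegel parabolic of $\varGamma_i$. The decisive step --- invariance under the full group $\varGamma_i$ --- is exactly the point at which your proposal stops being a proof and becomes a citation: you write that it ``rests on the full strength of the theta correspondence rather than on any short computation,'' which is an honest admission that the core of the theorem is not being proved. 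Since the paper itself treats the statement the same way (as an external input from \cite{Ik3}), this is not a defect relative to the paper, but it should be recognized that no new proof has been supplied.

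Two smaller points on your sketch of Ikeda's method. First, the modularity of the Hermitian Ikeda lift in \cite{Ik3} is not obtained from the theta/doubling correspondence; it is proved by comparing the Fourier--Jacobi coefficients of the candidate series with those of Hermitian Siegel--Eisenstein series, using the factorization and functional equations of the local Siegel series $F_p(T,X)$ together with the modularity of the Jacobi--Eisenstein series and the Kohnen-type plus space for the input form. The functional equations $\widetilde F_p(T,X^{-1})=(-D,\gamma(T))_p\widetilde F_p(T,X)$ ($m$ even) and $\widetilde F_p(T,X^{-1})=\widetilde F_p(T,X)$ ($m$ odd) enter there, as you correctly anticipate, but as inputs to the Fourier--Jacobi induction, not to an $L$-function comparison. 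Second, the identification of the standard $L$-function of $Lift^{(m)}(f)$ is the separate Theorem~2.2 of the paper and is a consequence of the construction, not a mechanism for proving that $I_m(f)_i$ transforms correctly under $\varGamma_i$; conflating the two inverts the logical order of Ikeda's argument.
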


 \bigskip
 This is a Hermitian analogue of the lifting constructed in \cite{Ik1}. We call $I_m(f)$  the Ikeda lift of $f$ for ${\mathcal U}^{(m)}.$
 
 It follows from Theorem 2.1 that  we can define an element $(I_m(f)_1,...,I_m(f)_h)^{\sharp}$ of ${\mathcal S}_{2k+2n}({\mathcal U}^{(m)}({\bf Q}) \backslash {\mathcal U}^{(m)}({\bf A}),\det^{-k-n}),$ which we write $Lift^{(m)}(f).$ 
  
 \bigskip
 
\begin{thm}
 Let $m=2n$ or $2n+1.$ Suppose that $Lift^{(m)}(f)$ is not identically zero. Then $Lift^{(m)}(f)$ is a Hecke eigenform in ${\mathcal S}_{2k+2n}({\mathcal U}^{(m)}({\bf Q}) \backslash {\mathcal U}^{(m)}({\bf A}) ,\det^{-k-n})$ and its standard $L$-function $L(s,Lift^{(m)}(f),{\rm st})$ coincides with  
  $$\prod_{i=1}^m L(s+k+n-i+1/2,f)L(s+k+n-i+1/2,f,\chi)$$
  up to bad Euler factors.
 \end{thm}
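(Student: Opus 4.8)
The plan is to reduce the assertion to a purely local computation of the Satake parameters of the unramified components of the automorphic representation $\pi=\otimes_v\pi_v$ generated by $Lift^{(m)}(f)$. Since the standard $L$-function is, up to the finitely many bad Euler factors we are permitted to ignore, the Euler product $\prod_p L_p(s,\pi_p,{\rm st})$ over the good primes $p\nmid D$, where each factor is determined by the Satake parameter of the spherical representation $\pi_p$, it suffices to (i) show that $Lift^{(m)}(f)$ is a Hecke eigenform, and (ii) identify the Satake parameter of $\pi_p$ at each $p\nmid D$ (such $p$ being automatically unramified in $K$).

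For (i) I would exploit the explicit shape of the Fourier coefficients. Up to the archimedean factor $|\gamma(T)|^{k}$ (resp. $|\gamma(T)|^{k-1/2}$) and the normalizing factors $|\det t_{i,p}\det\overline{t_{i,p}}|_p^{\,n}$, the coefficient $a_{I_m(f)_i}(T)$ factors as a product over $p$ of the local polynomials $\widetilde F_p(t_{i,p}^*Tt_{i,p},\alpha_p^{-1})$. This factorization reflects that $\pi=\otimes_v\pi_v$ and that at each good $p$ the component $\pi_p$ is unramified. Being the spherical vector in an irreducible unramified principal series whose inducing character is read off from $\alpha_p$, the generating vector is automatically an eigenvector for the local Hecke algebra, which yields the Hecke eigenform statement.

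The crux is step (ii). Here the essential input is that the local Siegel series $b_p(T,s)$, equivalently its numerator $F_p(T,X)$, governs the way the $T$-th Fourier coefficient of a spherical Hermitian modular form transforms under the local Hecke algebra. The renormalization $\widetilde F_p(T,X)=X^{-{\rm ord}_p(\gamma(T))}F_p(T,p^{-m}X^{2})$ is arranged precisely so that $\widetilde F_p(T,X)$ becomes a symmetric function of the spherical parameter $X$; the functional equations $\widetilde F_p(T,X^{-1})=(-D,\gamma(T))_p\,\widetilde F_p(T,X)$ for $m$ even and $\widetilde F_p(T,X^{-1})=\widetilde F_p(T,X)$ for $m$ odd, recorded before the statement, pin down the symmetric arrangement of that parameter. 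Evaluating at $X=\alpha_p^{-1}$ and comparing with the known relation between the local density and the degree-two local factor of the Hecke $L$-function of $f$, I expect to find the Satake parameter of $\pi_p$ to be a ladder $\{\alpha_p^{\pm1}p^{\,e_i}\}_{i=1}^{m}$, whose exponents $e_i$ are symmetric about $0$ and dictated by the shifts $k+n-i+1/2$ in the argument, twisted by $\chi(p)$ according to whether $p$ splits or is inert in $K$. Substituting this into the standard representation of ${}^L U(m,m)=GL_{2m}({\bf C})\rtimes{\rm Gal}(K/{\bf Q})$ and matching with the good Euler factors in the definitions of $L(s,f)$ and $L(s,f,\chi)$ should give
\[
L_p(s,\pi_p,{\rm st})=\prod_{i=1}^{m}L_p(s+k+n-i+1/2,f)\,L_p(s+k+n-i+1/2,f,\chi),
\]
and taking the product over good $p$ then yields the theorem.

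The main obstacle is exactly this local identification of the Satake parameter. Extracting the parameter from the eigen-data encoded in $\widetilde F_p(T,\alpha_p^{-1})$, and in particular tracking the precise shifts $e_i$ and the twisting by $\chi$, requires careful bookkeeping with the local densities; and the trichotomy between split, inert, and ramified $p$ — reflected in the symbol $\xi_p$ and in the two flavours of functional equation above — is where the unitary setting is genuinely more delicate than its Siegel counterpart. Once the parameters are in hand, assembling the Euler product and checking the degree is routine.
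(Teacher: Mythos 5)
This statement is Theorem 2.2 of the paper, which is not proved there at all: it is quoted verbatim from Ikeda \cite{Ik3} (``Then Ikeda \cite{Ik3} showed the following''), so there is no internal proof to compare against. Judged on its own terms, your proposal correctly identifies the overall shape of the argument (local--global decomposition, identification of Satake parameters at good primes, matching against the Euler factors of $L(s,f)$ and $L(s,f,\chi)$), but it has a genuine gap at its foundation. Your justification of step (i) is circular: you argue that because the Fourier coefficients factor as $\prod_p \widetilde F_p(t_{i,p}^*Tt_{i,p},\alpha_p^{-1})$, the representation generated is a restricted tensor product with unramified local components, and hence the spherical vector is ``automatically'' a Hecke eigenvector. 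But you can only speak of ``the'' local component $\pi_p$ of an irreducible unramified principal series after you know that the automorphic representation generated by $Lift^{(m)}(f)$ is irreducible --- and establishing that (or, equivalently at this stage, the eigenform property) is precisely the content of the theorem. A mere product structure of Fourier coefficients over primes does not imply the eigenform property; what is actually required is to compute the action of the generators of the local Hecke algebra at $p$ on the Fourier expansion and verify that the family $T\mapsto\widetilde F_p(T,\alpha_p^{-1})$ transforms by the predicted scalar. That amounts to nontrivial identities for the Siegel series $F_p(T,X)$ under $T\mapsto T[g]$ over the coset decompositions of the Hecke operators, and it is the technical heart of Ikeda's proof (carried out there via the relation of $F_p$ to degenerate principal series and Whittaker/spherical functions, not by inspection of the product formula).

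Step (ii) has the same character: the passage from ``evaluating at $X=\alpha_p^{-1}$ and comparing with the known relation between the local density and the degree-two local factor'' to the explicit ladder of Satake parameters $\{\alpha_p^{\pm1}p^{e_i}\}$ is asserted (``I expect to find\dots'', ``should give\dots'') rather than derived; the functional equations of $\widetilde F_p$ constrain the answer but do not determine it. So while the road map is the standard and correct one, the two steps you defer --- the eigenform property and the local identification of $\pi_p$ --- are exactly the theorem itself, and neither is actually established by the proposal.
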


\bigskip

We call  $Lift^{(m)}(f)$ the adelic Ikeda lift of $f$ for ${\mathcal U}^{(m)}.$


 Let
 $Q_D$ be the set of prime divisors of $D.$ For each prime $q \in Q_D,$ put $D_q=q^{{\rm ord}_q(D)}.$ We define a Dirichlet character $\chi_q$ by
 $$\chi_q(a)= \listtwo(\chi(a'); \ {\rm if} \ (a,q)=1 ; 0 ; \ {\rm if} \ q|a) ,$$
 where $a'$ is an integer such that
 $$a' \equiv a \ {\rm mod} \ D_q \quad \ {\rm and} \ a' \equiv 1 \ {\rm mod} \ DD_q^{-1}.$$
  For a subset $Q$ of $Q_D$ put
  $\chi_Q=\prod_{q \in Q} \chi_q$ and $\chi'_Q=\prod_{q \in Q_D, q \not\in Q} \chi_q.$ Here we make the convention that $\chi_Q=1$ and $\chi'_Q=\chi$ if $Q$ is the empty set. Let 
  $$f(z)=\sum_{N=1}^{\infty} c_f(N){\bf e}(Nz)$$
   be a primitive form in ${\textfrak S}_{2k+1}(\varGamma_0(D),\chi).$ Then there exists a primitive form 
   $$f_Q(z)=\sum_{N=1}^{\infty} c_{f_Q}(N){\bf e}(Nz)$$
   such that 
   $$c_{f_Q}(p)=\chi_Q(p)c_f(p) \ {\rm for} \ p \not\in Q$$
   and
   $$c_{f_Q}(p)=\chi'_Q(p)\overline{c_f(p)} \ {\rm for} \ p \in Q.$$
 Let $L(s,\chi^i)=\zeta(s)$ or $L(s,\chi)$ according as $i$ is even or odd, where $\zeta(s)$ and $L(s,\chi)$ are Rimann's zeta function, and the Dirichlet $L$-function for $\chi$, respectively. Moreover we define $\widetilde {\Lambda}(s,\chi^i)$ by
$$\widetilde {\Lambda}(s,\chi^i)=2(2\pi )^{-s}\Gamma(s)L(s,\chi^i)$$
with $\Gamma(s)$ the Gamma function.

 Then our main results in this paper are as follows:
 
 \bigskip

\begin{thm}
Let $k$ be a nonnegative integer and $n$ a positive integer. Let $f$ be a primitive form in ${\textfrak S}_{2k+1}(\varGamma_0(D),\chi).$ Then, we  have
$$L(s,I_{2n}(f))=D^{ns+n^2-n/2-1/2}2^{-2n+1} $$
$$ \times \prod_{i=2}^{2n} \widetilde {\Lambda}(i,\chi^i) \sum_{Q \subset Q_D } \chi_Q((-1)^n)\prod_{j=1}^{2n}L(s-2n+j,f_Q,\chi^{j-1}) .$$

\end{thm}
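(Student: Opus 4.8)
The plan is to substitute the explicit Fourier coefficients of $I_{2n}(f)=I_{2n}(f)_1$ into the definition of $L(s,F)$ and then to ``Eulerize'' the resulting weighted sum over Hermitian classes. First I would specialize the coefficient formula to $i=1$. Since $t_1=1$ we have $t_{1,p}=1$ for every $p$, so $a_{I_{2n}(f)_1}(T)=|\gamma(T)|^{k}\prod_p\widetilde F_p(T,\alpha_p^{-1})$; and as $\gamma(T)=(-D)^{n}\det T$ with $T$ positive definite, $|\gamma(T)|^{k}=D^{nk}(\det T)^{k}$. Hence
$$L(s,I_{2n}(f))=D^{nk}\sum_{T\in\widehat{\rm Her}_m({\mathcal O})^+/SL_m({\mathcal O})}\frac{\prod_p\widetilde F_p(T,\alpha_p^{-1})}{(\det T)^{s-k}\,e^*(T)}.$$
The summand is a product of purely local data $\widetilde F_p(T,\alpha_p^{-1})$ together with $(\det T)^{-(s-k)}$, while the weight $1/e^*(T)$ is the reciprocal of the order of $\mathcal{SU}_T({\bf Q})\cap SL_m({\mathcal O})$; this is exactly the shape for which Siegel's mass formula for the unitary group is available.

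Next I would perform the Eulerization. Passing to the adelic coset decomposition recorded in Section 2 and applying the mass formula genus by genus (this is the content of the reduction in Theorem 3.4), the weighted global sum factors, up to an explicit constant, into a product over $p$ of a local generating series in the variable $t=p^{-(s-k)}$ that packages $\widetilde F_p(T,\alpha_p^{-1})$ against the local Siegel-series denominators $\prod_i(1-p^{2i-s})\prod_i(1-\xi_p p^{2i-1-s})$ and the local masses. This local series is precisely $\hat P_{m,p}(d;X,t)$ with $X=\alpha_p^{-1}$ and $d$ the local discriminant invariant attached to $\gamma(T)$. The archimedean factor and the denominators combine with the values of $\zeta$ and $L(\cdot,\chi)$ coming from the global mass to produce the completed special values $\prod_{i=2}^{2n}\widetilde\Lambda(i,\chi^i)$, and the residual powers of $2$ and of $D$ assemble into the constants $2^{-2n+1}$ and $D^{ns+n^2-n/2-1/2}$ in the statement.

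The heart of the matter, and the step I expect to be the main obstacle, is the explicit evaluation of $\hat P_{m,p}(d;X,t)$ for every $p$. I would compute it by induction on the degree, exploiting the functional equations of $\widetilde F_p$ displayed above --- in particular $\widetilde F_p(T,X^{-1})=(-D,\gamma(T))_p^{\,m-1}\widetilde F_p(T,X)$ --- to control the symmetry of the local series, and reducing the degree-$m$ Siegel series to lower degree by the standard recursion. The split and inert primes $p\nmid D$ are routine, but the case $p\mid D$ ramified in $K$, where $\xi_p=0$, is delicate: there the Hilbert symbol $(-D,\gamma(T))_p$ genuinely separates the two local types, and tracking it correctly is the crux of Section 4. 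The outcome should be that $\hat P_{m,p}$ is a ratio whose numerator is a product of factors of the form $(1-\alpha_p p^{a}\chi(p)^{b}t)^{-1}(1-\alpha_p^{-1}p^{a}\chi(p)^{b}t)^{-1}$.

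Finally I would assemble the Euler product. Matching the local factors of $\hat P_{m,p}$ with the Euler factors of the twisted Hecke $L$-functions identifies $\prod_p\hat P_{m,p}$, for a fixed choice of the ramified-prime invariants, with $\prod_{j=1}^{2n}L(s-2n+j,f_Q,\chi^{j-1})$. Summing over these invariants is the same as summing over subsets $Q\subset Q_D$, since at each ramified prime the local computation selects between the Euler factor attached to $f$ and the one attached to its twist $f_Q$; the accompanying sign $\chi_Q((-1)^n)$ is produced by the values $(-D,\gamma(T))_p$ of the functional equation at the primes $q\in Q$ together with the factor $(-1)^n$ in $\gamma(T)=(-D)^n\det T$. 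Collecting constants then yields the asserted closed form.
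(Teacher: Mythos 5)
Your proposal follows essentially the same route as the paper: reduce the Koecher--Maa{\ss} series via the mass formula for $\mathcal{SU}_T$ to an Euler-type product of the local series $\hat P_{m,p}(d;\alpha_p^{-1},t)$ (the content of Theorem 3.4 together with Proposition 4.3.7), evaluate these explicitly by the Siegel-series recursion and the functional equation of $\widetilde F_p$ with the ramified primes splitting into two local types, and recover the sum over $Q\subset Q_D$ with the sign $\chi_Q((-1)^n)$ exactly as in Section 5. Apart from minor imprecisions in bookkeeping (e.g.\ the local variable is $t=p^{-s+k+2n}$ after the $(\det T)^{m}$ from the mass formula is absorbed, not $p^{-(s-k)}$), this is the paper's argument.
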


\begin{thm}
Let $k$ be a positive integer and $n$ a non-negative integer. Let  $f$ be  a primitive form in ${\textfrak S}_{2k}(SL_2({\bf Z})).$ Then, we  have
$$L(s,I_{2n+1}(f))=D^{ns+n^2+3n/2}2^{-2n} \prod_{i=2}^{2n+1} \widetilde {\Lambda}(i,\chi^i)\prod_{j=1}^{2n+1}L(s-2n-1+j,f,\chi^{j-1}).$$
\end{thm}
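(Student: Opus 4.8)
The plan is to evaluate $L(s,I_{2n+1}(f))$ directly from its definition, inserting the explicit Fourier coefficients of the lift and then decomposing the resulting sum over global $SL_m(\mathcal O)$-classes into purely local data. Since $I_{2n+1}(f)=I_{2n+1}(f)_1$ and $t_1=1$, the coefficient reduces to $a_{I_{2n+1}(f)}(T)=|\gamma(T)|^{k-1/2}\prod_p\widetilde F_p(T,\alpha_p^{-1})$, which I would insert into $L(s,I_{2n+1}(f))=\sum_{T}a_{I_{2n+1}(f)}(T)\big((\det T)^{s}e^{*}(T)\big)^{-1}$, the sum being over $T\in\widehat{\rm Her}_m(\mathcal O)^{+}/SL_m(\mathcal O)$ with $m=2n+1$. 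Since $m$ is odd we have $\gamma(T)=(-D)^{n}\det T$, hence $|\gamma(T)|=D^{n}\det T$, so that $|\gamma(T)|^{k-1/2}(\det T)^{-s}$ is a single power of $\det T$; distributed over the primes this shifts the spectral variable by $k-1/2$ and meshes with the normalization $\alpha_p+\alpha_p^{-1}=p^{-k+1/2}a(p)$, so that the weight of $f$ is absorbed into the Hecke $L$-functions and leaves no $k$ in the final constant. The outcome is that each summand factors as an archimedean weight times a product over $p$ of the Laurent polynomials $\widetilde F_p$ evaluated at $\alpha_p^{-1}$.

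The central step is to turn the weighted sum over global classes into an Euler product. Following the method of \cite{I-K2} and \cite{I-K3}, I would invoke the mass formula for the special unitary groups $\mathcal{SU}_T$ --- this is precisely where the weight $1/e^{*}(T)$ is needed --- to rewrite the sum over $\widehat{\rm Her}_m(\mathcal O)^{+}/SL_m(\mathcal O)$ as an adelic expression, separating it into an archimedean contribution and, at each finite prime $p$, a formal power series $\hat P_{m,p}(d;X,t)$ in the variable $t$ recording $\mathrm{ord}_p(\det T)$, specialized at $X=\alpha_p$. The archimedean factor supplies the Gamma functions, while the local densities of the ambient space of Hermitian forms supply the Dirichlet $L$-values $L(i,\chi^{i})$; together these assemble into the product $\prod_{i=2}^{2n+1}\widetilde\Lambda(i,\chi^{i})$. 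This reduction is exactly the auxiliary statement (Theorem 3.4) that I would establish in Section 3.

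I would then compute $\hat P_{m,p}(d;X,t)$ explicitly for every prime $p$, treating separately the three cases $\xi_p=1,-1,0$ according as $p$ is split, inert, or ramified in $K$. The symmetry $\widetilde F_p(T,X^{-1})=\widetilde F_p(T,X)$, valid for odd $m$, is the key algebraic input: it makes the local sum invariant under $\alpha_p\mapsto\alpha_p^{-1}$ and lets it reorganize into a product of geometric series whose factors are exactly the local Euler factors of $L(s-2n-1+j,f,\chi^{j-1})$ for $j=1,\dots,2n+1$. Multiplying these local contributions over all $p$ produces the product of twisted Hecke $L$-functions in the statement, and collecting the leftover powers of $D$ and of $2$ gives the constant $D^{ns+n^{2}+3n/2}2^{-2n}$.

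The main obstacle will be the explicit evaluation of $\hat P_{m,p}(d;X,t)$ at the ramified primes $p\mid D$, where the lattice description of semi-integral Hermitian matrices and the corresponding local Siegel series are more delicate than in the split or inert cases. Here I expect the level-one hypothesis $f\in{\textfrak S}_{2k}(SL_2({\bf Z}))$ to be decisive: because $f$ carries no nebentypus, there is no ambiguity in the choice of Satake parameter at $p\mid D$, so that --- in contrast with the even case of Theorem 2.3 --- no sum over subsets $Q\subset Q_D$ or twisted forms $f_Q$ arises and the ramified local factor collapses to a single clean expression. Verifying this collapse, and keeping exact track of the resulting powers of $D$, is the delicate part of the argument.
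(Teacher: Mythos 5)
Your plan follows the paper's proof essentially verbatim: Theorem 3.4 (obtained via the mass formula for $\mathcal{SU}_T$ and the local--global bijection of Proposition 3.3) reduces $L(s,I_{2n+1}(f))$ to a product of the local series $\hat P_{2n+1,p}(d_0,\alpha_p^{-1},t)$, and Theorem 4.3.2 together with Theorem 4.3.6 supplies their explicit values, from which the stated formula drops out. One small correction to your heuristic for the ramified primes: the absence of a sum over $Q\subset Q_D$ in the odd case is not due to the level-one hypothesis on $f$ or to any ambiguity of Satake parameters at $p\mid D$, but to the fact that for odd $m$ the functional equation $\widetilde F_p(T,X^{-1})=\widetilde F_p(T,X)$ carries no Hilbert-symbol factor, so the ramified local series $P_{2n+1,p}(d_0,X,t)$ is independent of the unit class $d_0$ (Theorem 4.3.2(3)) and no character of $d$ needs to be unfolded into twisted forms $f_Q$.
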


\bigskip
{\bf Remark.}  We note that $L(s,I_{2n+1}(f))$ has an Euler product.

\section{Reduction to local computations}
To prove our main result, we reduce the problem to local computations. 
Let $K_p=K \otimes {\bf Q}_p$ and ${\mathcal O}_p={\mathcal O} \otimes {\bf Z}_p$ as in  Notation. Then $K_p$ is  a quadratic extension of ${\bf Q}_p$ or $K_p={\bf Q}_p \oplus {\bf Q}_p.$ 
In the former case let  $f_p$ the exponent of the conductor of $K_p/{\bf Q}_p.$ If $K_p$ is ramified over ${\bf Q}_p,$ put $e_p=f_p -\delta_{2,p},$ where $\delta_{2,p}$ is Kronecker's delta. 
If  $K_p$ is unramified over ${\bf Q}_p,$ put $e_p=f_p=0.$ 
In the latter case, put  $e_p=f_p=0.$  
Let $K_p$ be a quadratic extension of ${\bf Q}_p,$ and $\varpi=\varpi_p$ and $ \pi=\pi_p$ be  prime elements of $K_p$ and ${\bf Q}_p$, respectively.   If $K_p$ is unramified over ${\bf Q}_p,$ we take $\varpi=\pi=p.$
 If $K_p$ is ramified over ${\bf Q}_p,$ we take $\pi$ so that $\pi=N_{K_p/{\bf Q}_p}(\varpi).$
Let $K_p={\bf Q}_p \oplus {\bf Q}_p.$ Then put $\varpi=\pi=p.$ 
Let $\chi_{K_p}$ be the quadratic character of ${\bf Q}_p^{\times}$ corresponding to the quadratic extension $K_p/{\bf Q}_p.$ We note 
that we have $\chi_{K_p}(a)=(-D_0,a)_p$ for $a \in {\bf Q}_p^{\times}$ if $K_p={\bf Q}_p(\sqrt{-D_0})$ with $D_0 \in {\bf Z}_p.$
Moreover put  $\widetilde {\rm Her}_{m}({\mathcal O}_p)=p^{e_p}\widehat {\rm Her}_{m}({\mathcal O}_p).$ We note that $\widetilde {\rm Her}_{m}({\mathcal O}_p)={\rm Her}_{m}({\mathcal O}_p)$ if $K_p$ is not ramified  over ${\bf Q}_p.$ Let $K$ be an imaginary quadratic extension of ${\bf Q}$ with  discriminant $-D.$ We then put $\widetilde D=\prod_{p | D} p^{e_p},$ and
$\widetilde {{\rm Her}}_{m}({\mathcal O})=\widetilde D {\rm Her}_{m}({\mathcal O}).$ 
An element $X \in M_{ml}({\mathcal O}_p)$ with $m \ge l$ is said to be primitive if there is an element $Y$ of $M_{m,m-l}({\mathcal O}_p)$ such that
$(X \ Y) \in GL_m({\mathcal O}_p).$ If $K_p$ is a field, this is equivalent to saying that ${\rm rank}_{{\mathcal O}_p/\varpi {\mathcal O}_p} X=l.$ 
If $K_p={\bf Q}_p \oplus {\bf Q}_p,$ and $X=(X_1,X_2) \in M_{ml}({\bf Z}_p)  \oplus M_{ml}({\bf Z}_p),$ this is equivalent to saying that 
${\rm rank}_{{\bf Z}_p/p{\bf Z}_p} X_1={\rm rank}_{{\bf Z}_p/p{\bf Z}_p} X_2=l.$
Now let $m$ and $l$ be positive integers such that $m \ge l.$ Then for an integer  $a$ and $A \in \widetilde {\rm Her}_m({\mathcal O}_p), \ B \in \widetilde {\rm Her}_l({\mathcal O}_p)$ put 
$${\mathcal A}_a(A,B)=\{X \in
M_{ml}({\mathcal O}_p)/p^aM_{ml}({\mathcal O}_p) \ | \ A[X]-B \in p^a\widetilde {{\rm Her}}_l({\mathcal O}_p) \},$$
and 
$${\mathcal B}_a(A,B)=\{X \in {\mathcal A}_a(A,B) \ | \ 
  X \ {\rm is \ primitive}  \}.$$
Suppose that $A$ and  $B$ are non-degenerate. Then the number $p^{a(-2ml+l^2)}\#{\mathcal A}_a(A,B)$ is independent of $a$ if $a$ is sufficiently large. Hence we define the local density $\alpha_p(A,B)$  representing $B$ by $A$ as
$$\alpha_p(A,B)=\lim_{a \rightarrow
\infty}p^{a(-2ml+l^2)}\#{\mathcal A}_a(A,B).$$
Similarly  we can define the primitive local density $\beta_p(A,B)$ as 
 $$\beta_p(A,B)=\lim_{a \rightarrow
\infty}p^{a(-2ml+l^2)}\#{\mathcal B}_a(A,B)$$
 if $A$ is non-degenerate. We remark that the primitive local density $\beta_p(A,B)$ can be defined even if $B$ is not non-degenerate. 
In particular we write $\alpha_p(A)=\alpha_p(A,A).$ 
We also define $\upsilon_p(A)$ for $A \in {\rm Her}_m({\mathcal O}_p)^{\times}$ as 
$$\upsilon_p(A)=\lim_{a \rightarrow \infty}p^{-am^2} \#(\Upsilon_{a}(A)),$$
where 
$${\Upsilon}_a(A)=\{X \in
M_{m}({\mathcal O}_p)/p^aM_{m}({\mathcal O}_p) \ | \ A[X]-A \in p^a {\rm Her}_m({\mathcal O}_p) \}.$$
The relation between $\alpha_p(A)$ and $\upsilon_p(A)$ is as follows:

\begin{lem}
Let $T \in \widetilde {\rm Her}_m({\mathcal O}_p)^{\times}.$ Suppose that $K_p$ is ramified over ${\bf Q}_p.$ Then we have
$$\alpha_p(T)=p^{-m(m+1)f_p/2+m^2\delta_{2,p}}\upsilon_p(T).$$
Otherwise, $\alpha_p(T)=\upsilon_p(T).$ 
\end{lem}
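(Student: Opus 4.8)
The plan is to compare the two counting problems defining $\alpha_p(T)=\alpha_p(T,T)$ and $\upsilon_p(T)$, which count $X\in M_m({\mathcal O}_p)/p^aM_m({\mathcal O}_p)$ by exactly the same recipe, the only difference being the lattice in which the congruence $T[X]-T\equiv 0$ is imposed: $\widetilde{\rm Her}_m({\mathcal O}_p)$ for $\alpha_p$ and ${\rm Her}_m({\mathcal O}_p)$ for $\upsilon_p$. So the entire content is to track how the count changes when this target lattice is refined. First I would record the relevant lattice inclusion, and then identify each local density with a common volume times the volume of the target lattice.

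First I would describe $\widehat{\rm Her}_m({\mathcal O}_p)$ explicitly: a Hermitian matrix is semi-integral over ${\mathcal O}_p$ precisely when its diagonal entries lie in ${\bf Z}_p$ and its off-diagonal entries lie in the inverse different $\{x\in K_p\mid {\rm Tr}_{K_p/{\bf Q}_p}(x{\mathcal O}_p)\subseteq {\bf Z}_p\}=\varpi^{-f_p}{\mathcal O}_p$. Using $\widetilde{\rm Her}_m({\mathcal O}_p)=p^{e_p}\widehat{\rm Her}_m({\mathcal O}_p)$, $e_p=f_p-\delta_{2,p}$, and $p=N_{K_p/{\bf Q}_p}(\varpi)\sim\varpi^2$, one finds that the off-diagonal entries of $\widetilde{\rm Her}_m({\mathcal O}_p)$ lie in $\varpi^{f_p-2\delta_{2,p}}{\mathcal O}_p\subseteq {\mathcal O}_p$ and the diagonal entries in $p^{e_p}{\bf Z}_p$; in particular $\widetilde{\rm Her}_m({\mathcal O}_p)\subseteq {\rm Her}_m({\mathcal O}_p)$, so $T\in {\rm Her}_m({\mathcal O}_p)^{\times}$ and $\upsilon_p(T)$ is defined. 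Counting index contributions entry by entry gives
$$[{\rm Her}_m({\mathcal O}_p):\widetilde{\rm Her}_m({\mathcal O}_p)]=p^{m e_p+\binom{m}{2}(f_p-2\delta_{2,p})}=p^{m(m+1)f_p/2-m^2\delta_{2,p}}.$$
I would also verify, using that $XBX^{*}\in {\rm Her}_m({\mathcal O}_p)$ for $B\in {\rm Her}_m({\mathcal O}_p)$, that $S\mapsto S[X]$ preserves semi-integrality, whence $T[X]-T\in\widetilde{\rm Her}_m({\mathcal O}_p)$ for every $X\in M_m({\mathcal O}_p)$; thus the $\alpha_p$-condition is genuinely a refinement of the $\upsilon_p$-condition inside the same lattice.

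The core step is to interpret both densities as volumes. Fix $F(X)=T[X]=X^{*}TX$ on $M_m({\mathcal O}_p)$. Since $\det F(X)=N_{K_p/{\bf Q}_p}(\det X)\det T$ and $\det(T+p^aR)$ is a unit multiple of $\det T$ once $a$ exceeds ${\rm ord}_p(\det T)$, every $X$ occurring in ${\mathcal A}_a(T,T)$ or $\Upsilon_a(T)$ is automatically in $GL_m({\mathcal O}_p)$ for large $a$. On $GL_m({\mathcal O}_p)$ the differential $H\mapsto H^{*}TX+X^{*}TH$ is the Hermitianization of $H^{*}TX$, hence surjective onto ${\rm Her}_m(K_p)$ (for all $p$, including $p=2$, since $Z\mapsto Z+Z^{*}$ is onto), so $F$ is a submersion and the pushforward of Haar measure has a locally constant density $\rho(T)>0$ at $T$ with respect to Haar measure on the $m^2$-dimensional ${\bf Q}_p$-space ${\rm Her}_m(K_p)$. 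Consequently, for $L\in\{\widetilde{\rm Her}_m({\mathcal O}_p),{\rm Her}_m({\mathcal O}_p)\}$,
$$p^{-am^2}\#\{X\bmod p^a\mid T[X]-T\in p^aL\}=\rho(T)\,{\rm vol}(L)\,(1+o(1)),$$
because, the condition depending only on $X\bmod p^a$, the left side equals $p^{am^2}$ times the measure of $F^{-1}(T+p^aL)$ and ${\rm vol}(p^aL)=p^{-am^2}{\rm vol}(L)$. The crucial point is that $\rho(T)$ is intrinsic to $F$ and $T$ and is therefore the same in both cases; only the target lattice changes.

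Letting $a\to\infty$ gives $\alpha_p(T)=\rho(T)\,{\rm vol}(\widetilde{\rm Her}_m({\mathcal O}_p))$ and $\upsilon_p(T)=\rho(T)\,{\rm vol}({\rm Her}_m({\mathcal O}_p))$, so their ratio is $[{\rm Her}_m({\mathcal O}_p):\widetilde{\rm Her}_m({\mathcal O}_p)]^{-1}=p^{-m(m+1)f_p/2+m^2\delta_{2,p}}$, which is the claim; in the unramified or split case $e_p=f_p=0$ forces $\widetilde{\rm Her}_m({\mathcal O}_p)={\rm Her}_m({\mathcal O}_p)$ and the ratio is $1$. I expect the main obstacle to be the submersion/density step, i.e.\ justifying rigorously that the limit defining the local density equals $\rho(T)\,{\rm vol}(L)$ with a common $\rho(T)$; this is the $p$-adic implicit-function/co-area argument applied to the non-degenerate Hermitian form, and the only delicate point is surjectivity of the differential at $p=2$, which must be checked by hand as above rather than by dividing by $2$. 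Everything else reduces to the elementary index computation already indicated.
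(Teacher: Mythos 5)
Your proposal is correct, but it reaches the conclusion by a genuinely different route from the paper. Both arguments rest on the same two pillars: (i) the only difference between $\alpha_p(T)=\alpha_p(T,T)$ and $\upsilon_p(T)$ is the target lattice ($\widetilde{\rm Her}_m({\mathcal O}_p)$ versus ${\rm Her}_m({\mathcal O}_p)$) in which the congruence is imposed, and (ii) the index $[{\rm Her}_m({\mathcal O}_p):\widetilde{\rm Her}_m({\mathcal O}_p)]=p^{m(m+1)f_p/2-m^2\delta_{2,p}}$; your entrywise computation of this index via the inverse different is correct and is consistent with the paper's count $l=[p^r\widetilde{\rm Her}_m({\mathcal O}_p):p^{r+e_p}{\rm Her}_m({\mathcal O}_p)]=p^{m(m-1)f_p/2}$ once the factor $p^{m^2e_p}$ from rescaling $a$ is taken into account. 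Where you diverge is in proving that the density scales with the covolume of the target lattice: you interpret both densities as $\rho(T)\,{\rm vol}(L)$ for the pushforward of Haar measure under $X\mapsto T[X]$, which requires the submersion/locally-constant-density step you flag as the main obstacle (and for which you correctly verify surjectivity of the differential, including at $p=2$, and the reduction to $X\in GL_m({\mathcal O}_p)$ for large $a$). The paper instead follows Kitaoka's Lemma 5.6.5 and proves the scaling by elementary counting: it partitions the residues of ${\rm Her}_m({\mathcal O}_p)$ mod $p^{r+e_p}{\rm Her}_m({\mathcal O}_p)$ lying in a fixed class mod $p^r\widetilde{\rm Her}_m({\mathcal O}_p)$ into $l$ representatives $T_i$, each $GL_m({\mathcal O}_p)$-equivalent to $T$ by the Newton-approximation Lemma 4.1.1, and exhibits a surjection $\bigsqcup_i\Upsilon_{r+e_p}(T_i)\to{\mathcal A}_r(T,T)$ with constant fibre cardinality $p^{2m^2e_p}$ (which holds because $T[X+p^rY]\equiv T[X]\bmod p^r\widetilde{\rm Her}_m({\mathcal O}_p)$, the semi-integrality stability you also verify). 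The trade-off is clear: your argument is more conceptual and transparent about why only the covolume matters, but it imports the $p$-adic implicit-function/co-area machinery, which is standard but is the one step you would still have to write out carefully; the paper's argument is entirely elementary and self-contained, and if you want to avoid the analytic step you can substitute its fibre-counting for your density lemma without changing anything else in your write-up.
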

\begin{proof}
The proof is similar to that in [Kitaoka \cite{Ki2}, Lemma 5.6.5], and we here give an outline of the proof.
The last assertion is trivial. Suppose that $K_p$ is ramified over ${\bf Q}_p.$ Let $\{T_i \}_{i=1}^l$ be a complete set of representatives of ${\rm Her}_m({\mathcal O}_p)/p^{r+e_p}{\rm Her}_m({\mathcal O}_p)$ such that $T_i \equiv T \ {\rm mod} \
 p^{r}\widetilde {\rm Her}_m({\mathcal O}_p).$ Then it is easily seen that 
 $$l=[p^r\widetilde {\rm Her}_m({\mathcal O}_p):p^{r+e_p}{\rm Her}_m({\mathcal O}_p)]=p^{m(m-1)f_p/2}.$$
 Define a mapping 
 $$\phi:\bigsqcup_{i=1}^{l} \Upsilon_{r+e_p}(T_i) \longrightarrow {\mathcal A}_r(T,T)$$ 
by $\phi(X)=X \ {\rm mod} \ p^r.$ For $X \in {\mathcal A}_r(T,T)$ and $Y \in M_m({\mathcal O}_p)$ we have
 $$T[X+p^rY] \equiv T[X] \ {\rm mod} \ p^r \widetilde {\rm Her}_m({\mathcal O}_p).$$
 Namely, $X+p^rY$ belongs to $\Upsilon_{r+e_p}(T_i)$ for some $i$ and therefore $\phi$ is surjective. Moreover for $X \in {\mathcal A}_r(T,T)$ we have $\#(\phi^{-1}(X))=p^{2m^2e_p}.$ For a sufficiently large integer $r$ we have
 $\#\Upsilon_{r+e_p}(T_i)=\#\Upsilon_{r+e_p}(T)$ for any $i.$ Hence
 $$p^{m(m-1)f_p/2}\#\Upsilon_{r+e_p}(T)=\sum_{i=1}^l \#\Upsilon_{r+e_p}(T_i)$$
 $$=p^{2m^2e_p}\#{\mathcal A}_{r}(T,T)=p^{m^2e_p}\#{\mathcal A}_{r+e_p}(T,T).$$
 Recall that $e_p=f_p-\delta_{2,p}.$ Hence
 $$\#\Upsilon_{r+e_p}(T)=p^{m(m+1)f_p/2-m^2\delta_{2p}}\#{\mathcal A}_{r+e_p}(T,T).$$
 This proves the assertion.
 \end{proof}

\bigskip

For $T \in {\rm Her}_m(K)^+,$ let ${\mathcal G}(T)$ denote the set of $SL_{m}({\mathcal O})$-equivalence classes of  positive definite Hermitian matrices $T'$ such that $T'$ is $SL_m({\mathcal O}_p)$-equivalent to $T$ for any prime number $p.$ Moreover put 
$$M^*(T)= \sum_{T' \in {\mathcal G}(T)} {1 \over e^*(T')}$$
for a positive definite Hermitian matrix $T$ of degree $m$ with entries in ${\mathcal O}.$ 

Let ${\mathcal U}_1$ be the unitary group defined in Section 1. Namely let 
$${\mathcal U}_1=\{ u \in R_{K/{\bf Q}}(GL_1) \ | \ \overline {u} u =1 \}. $$
For an element $T \in {\rm Her}_m({\mathcal O}_p),$ let
$$\widetilde {U_{p,T}}=\{ \det X \ | \ X \in {\mathcal U}_T(K_p) \cap GL_m({\mathcal O}_p) \},$$ 
and put  $U_{1,p}={\mathcal U}_1(K_p)  \cap {\mathcal O}_p^*.$ Then $\widetilde {U_{p,T}}$ is a subgroup of $U_{1,p}$ of finite index. We then put $l_{p,T}=[U_{1,p}:\widetilde {U_{p,T}}].$ 
We also put 
$$u_p=\left\{\begin{array}{ll}
( 1+p^{-1} )^{-1} & \ {\rm if} \ K_p/{\bf Q}_p \ {\rm is \ unramified } \\
 (1-p^{-1)^{-1}} & \ {\rm if} \ K_p={\bf Q}_p \oplus {\bf Q}_p \\
 2^{-1}         & \  \ {\rm if} \ K_p/{\bf Q}_p \ {\rm is \ ramified. }
 \end{array} \right. $$
To state the Mass formula for ${\mathcal SU}_T,$ put $\Gamma_{{\bf C}}(s)=2(2\pi)^{-s} \Gamma(s).$ 
\bigskip
\begin{prop}
Let $T \in {\rm Her}_m({\mathcal O})^+.$ Then
$$M^*(T)={ (\det T)^{m} \prod_{i=2}^{m} D^{i/2}\Gamma_{\bf C}(i)  \over 2^{m-1}\prod_p l_{p,T} u_p \upsilon_p(T)}.$$
 
\end{prop}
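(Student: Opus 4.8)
The plan is to deduce the formula from the Siegel mass formula for the semisimple simply connected group $\mathcal{SU}_T$, whose Tamagawa number is $\tau(\mathcal{SU}_T)=1$ by Weil's conjecture for simply connected groups. First I would fix an invariant gauge form on $\mathcal{SU}_T$ and build the associated Tamagawa measure $\omega=\prod_v\omega_v$, with the convergence factors furnished by the Artin $L$-functions $L(s,\chi^i)$ for $2\le i\le m$ that govern $\mathcal{SU}_m$ (these are the analogues of $\zeta(2),\dots,\zeta(m)$ for $SL_m$, twisted by the quadratic character $\chi$). Writing $U=\prod_p U_p$ with $U_p=\mathcal{SU}_T({\bf Q}_p)\cap SL_m(\mathcal{O}_p)$ and using that $\mathcal{SU}_T({\bf R})=SU(m)$ is compact, one has a finite decomposition
$$\mathcal{SU}_T({\bf A}_f)=\bigsqcup_{j}\mathcal{SU}_T({\bf Q})\,g_j\,U,$$
and the volume identity $\tau(\mathcal{SU}_T)=1$ becomes
$$1=\mathrm{vol}(\mathcal{SU}_T({\bf R}))\,\prod_p\mathrm{vol}(U_p)\,\sum_{j}\frac{1}{\#\Gamma_j},\qquad \Gamma_j=\mathcal{SU}_T({\bf Q})\cap g_jUg_j^{-1}.$$
The finite groups $\Gamma_j$ are exactly the stabilizers counted by $e^*$, so $\sum_j(\#\Gamma_j)^{-1}$ is the mass; matching the double cosets with the genus $\mathcal{G}(T)$ and evaluating each volume is the remaining work.

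For each finite prime I would compute $\mathrm{vol}(U_p)$ against $\omega_p$ by the orbit–stabilizer principle: $U_p$ is the stabilizer of $T$ inside $SL_m(\mathcal{O}_p)$, and its measure is the reciprocal of the local orbit measure, which up to the convergence factor is the local density $\upsilon_p(T)$. The factor $u_p$ then appears as the local value at $s=1$ of the $L$-factor of $\chi$, namely the "$i=1$" term absent from the product $\prod_{i=2}^m$; its three cases record whether $p$ is split, inert, or ramified. The index $l_{p,T}=[U_{1,p}:\widetilde{U_{p,T}}]$ enters through the determinant map, which measures locally the discrepancy between $\mathcal{U}_T$ and $\mathcal{SU}_T$ and hence mediates between the full unitary automorphism density $\upsilon_p(T)$ and the special unitary local volume, as well as between the $\mathcal{SU}_T$-double cosets and genuine $SL_m(\mathcal{O})$-classes in $\mathcal{G}(T)$. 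To express everything through $\upsilon_p(T)$ rather than $\alpha_p(T)$, I would invoke the preceding Lemma, which supplies the ramified correction $p^{-m(m+1)f_p/2+m^2\delta_{2,p}}$.

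The archimedean factor $\mathrm{vol}(\mathcal{SU}_T({\bf R}))=\mathrm{vol}(SU(m))$ produces the product $\prod_{i=2}^m\Gamma_{\bf C}(i)$ (the gauge-form volume of the compact $SU(m)$) and, via the Jacobian of the action on positive definite Hermitian matrices, the factor $(\det T)^m$; the cancelling $2^{m-1}$ is built into $\Gamma_{\bf C}(s)=2(2\pi)^{-s}\Gamma(s)$. The discriminant powers $D^{i/2}$ come from completing the convergence factors: the Tamagawa measure attached to the $K$-structure carries $|d_K|^{-1/2}$ contributions, and packaging the archimedean and conductor parts of the functional equations of $L(s,\chi^i)$ yields precisely $\prod_{i=2}^m D^{i/2}\Gamma_{\bf C}(i)$, the finite Euler factors of the $L(i,\chi^i)$ being absorbed into $\prod_p\upsilon_p(T)u_p$. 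Solving the volume identity for the mass then gives the stated formula.

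The main obstacle is the local bookkeeping at the ramified primes. Normalizing $\omega_p$ consistently with the combinatorially defined densities $\upsilon_p(T)$, tracking the conductor exponents $f_p,e_p$ and the $\delta_{2,p}$ term through the preceding Lemma, and correctly inserting $l_{p,T}$ together with $u_p=2^{-1}$ at ramified $p$, is exactly the delicate point flagged in the introduction, where the naive unramified computation must be modified and where the $D^{i/2}$ exponents and the power of $2$ are ultimately pinned down.
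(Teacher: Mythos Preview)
Your approach is essentially the one the paper takes: identify $M^*(T)$ with the adelic mass of $\mathcal{SU}_T$ via strong approximation for $SL_m$, use $\tau(\mathcal{SU}_T)=1$, and compute the local volumes in terms of the densities $\upsilon_p(T)$. The paper is terser, citing Rehmann \cite{Re} for the explicit constants $(\det T)^m\prod_{i=2}^m D^{i/2}\Gamma_{\bf C}(i)/2^{m-1}$ and observing directly that $\upsilon_p(1)=u_p^{-1}$, whereas you sketch how those constants would be derived from the archimedean volume and the convergence factors; but the underlying argument is the same. One small point: the preceding Lemma relating $\alpha_p$ and $\upsilon_p$ is not needed for the Proposition itself (which is already stated in terms of $\upsilon_p$); it is invoked only in passing to the Corollary in terms of $\alpha_p$.
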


\begin{proof}
The assertion is more or less well known (cf. \cite{Re}.)  But for the sake of completeness we 
here give an outline of the proof. Let $\mathcal {SU}_T({\bf A})$ be the adelization of $\mathcal {SU}_T$ and let $\{x_i \}_{i=1}^H$ be a subset of  $\mathcal {SU}_T({\bf A})$  such that
$$\mathcal {SU}_T({\bf A})= \bigsqcup_{i=1}^H  {\mathcal Q} x_i \mathcal {SU}_T({\bf Q}),$$
where ${\mathcal Q}=\mathcal {SU}_T({\bf R}) \prod_{p<\infty} (\mathcal {SU}_T(K_p) \cap SL_m({\mathcal O}_p)).$ We note that the strong approximation theorem holds for $SL_m.$ Hence, by using the standard method we can prove that
$$M^*(T)=\sum_{i=1}^H {1 \over \#(x_i^{-1} \mathcal {Q} x_i \cap \mathcal {SU}_T({\bf Q}))}.$$ 
We recall that the Tamagawa number of $\mathcal {SU}_T$ is 1 (cf. Weil \cite{We}). Hence, by [\cite{Re}, (1.1) and  (4.5)], we have
$$M^*(T)={ (\det T)^{m} \prod_{i=2}^{m} D^{i/2}\Gamma_{\bf C}(i)  \over 2^{m-1}\prod_p l_{p,T}} {\upsilon_p(1) \over \upsilon_p(T)}.$$
We can easily show that $\upsilon_p(1)=u_p^{-1}.$
 This completes the assertion.
\end{proof}

\bigskip

\begin{xcor}
Let $T \in \widetilde {\rm Her}_m({\mathcal O})^+.$ Then
$$M^*(T)={ 2^{c_Dm^2} (\det T)^{m} \prod_{i=2}^{m} \Gamma_{\bf C}(i)  \over 2^{m-1} D^{m(m+1)/4+1/2}\prod_p u_p \l_{p,T}\alpha_p(T)},$$
where $c_D=1$ or $0$ according as $2$ divides $D$ or not.
\end{xcor}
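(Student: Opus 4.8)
The plan is to obtain the Corollary by substituting Lemma 3.1 into Proposition 3.2, so the whole argument amounts to rewriting the local factor $\upsilon_p(T)$ in terms of $\alpha_p(T)$ and then bookkeeping the resulting powers of $D$ and $2$. First I would verify that both results apply. Since $\widetilde D=\prod_{p\mid D}p^{e_p}$ is a positive rational integer, $\widetilde {\rm Her}_m({\mathcal O})=\widetilde D\,{\rm Her}_m({\mathcal O})\subseteq {\rm Her}_m({\mathcal O})$, so $T\in\widetilde {\rm Her}_m({\mathcal O})^+$ lies in ${\rm Her}_m({\mathcal O})^+$ and Proposition 3.2 is available. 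At a ramified $p$ one has $T\in p^{e_p}{\rm Her}_m({\mathcal O}_p)\subseteq p^{e_p}\widehat {\rm Her}_m({\mathcal O}_p)=\widetilde {\rm Her}_m({\mathcal O}_p)$, because the complementary factor $\widetilde D/p^{e_p}$ is a $p$-adic unit; as $T$ is positive definite it is non-degenerate, so $T\in\widetilde {\rm Her}_m({\mathcal O}_p)^{\times}$ and Lemma 3.1 applies at every $p$.

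Next I would rewrite Lemma 3.1 as $\upsilon_p(T)=p^{m(m+1)f_p/2-m^2\delta_{2,p}}\alpha_p(T)$ at ramified $p$ and $\upsilon_p(T)=\alpha_p(T)$ at all other $p$, giving
$$\prod_p\upsilon_p(T)=\Bigl(\prod_{p\mid D}p^{m(m+1)f_p/2-m^2\delta_{2,p}}\Bigr)\prod_p\alpha_p(T).$$
The ramified primes are exactly the divisors of $D$, and the conductor--discriminant relation for the quadratic extension $K/{\bf Q}$ gives $D=\prod_{p\mid D}p^{f_p}$, whence $\prod_{p\mid D}p^{m(m+1)f_p/2}=D^{m(m+1)/2}$. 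Since $\delta_{2,p}$ is supported at $p=2$, which is ramified precisely when $2\mid D$, the remaining factor is $\prod_{p\mid D}p^{-m^2\delta_{2,p}}=2^{-c_Dm^2}$. Hence $\prod_p\upsilon_p(T)=D^{m(m+1)/2}\,2^{-c_Dm^2}\prod_p\alpha_p(T)$.

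Finally I would feed this into Proposition 3.2. Using $\sum_{i=2}^m i=m(m+1)/2-1$ to write $\prod_{i=2}^m D^{i/2}=D^{m(m+1)/4-1/2}$ in the numerator, the powers of $D$ combine as $D^{m(m+1)/4-1/2}/D^{m(m+1)/2}=D^{-(m(m+1)/4+1/2)}$, and the factor $2^{-c_Dm^2}$ in the denominator surfaces as $2^{c_Dm^2}$ in the numerator, while the factors $\prod_p l_{p,T}u_p$ pass through unchanged; this delivers exactly the asserted expression. The computation is routine once the two ingredients are in place; the only points demanding care are the accounting at $p=2$, where the exceptional term $p^{m^2\delta_{2,p}}$ of Lemma 3.1 is precisely what generates the prefactor $2^{c_Dm^2}$, and the use of the conductor--discriminant relation to identify $\prod_{p\mid D}p^{f_p}$ with $D$.
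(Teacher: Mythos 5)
Your proof is correct and is exactly the derivation the paper intends: the Corollary is stated without proof as an immediate consequence of Proposition 3.2 combined with Lemma 3.1, and your bookkeeping of the powers of $D$ (via $\prod_{p\mid D}p^{f_p}=D$ and $\sum_{i=2}^m i/2=m(m+1)/4-1/2$) and of the factor $2^{c_Dm^2}$ coming from the $\delta_{2,p}$ term is accurate. The preliminary checks that $T$ satisfies the hypotheses of both ingredients are a welcome addition.
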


For a subset ${\mathcal T}$ of ${\mathcal O}_p$ put
$${\rm Her}_m({\mathcal T})={\rm Her}_m({\mathcal O}_p) \cap M_m({\mathcal T}),$$ and for a subset ${\mathcal S}$ of ${\mathcal O}_p$ put
$${\rm Her}_m({\mathcal S},{\mathcal T})= \{ A \in {\rm Her}_m({\mathcal T}) \ | \  \det A \in {\mathcal S} \},$$ 
and $\widetilde {\rm Her}_m({\mathcal S},{\mathcal T})={\rm Her}_m({\mathcal S},{\mathcal T}) \cap \widetilde {\rm Her}_m({\mathcal O}_p).$
In particular if ${\mathcal S}$ consists of a single element $d$ we write ${\rm Her}_m({\mathcal S},{\mathcal T})$ as ${\rm Her}_m(d,{\mathcal T}),$ and so on. For  $d \in {\bf Z}_{>0}$ we also define the set $\widetilde {\rm Her}_m(d,{\mathcal O})^+$ in a similar way. 
For each $T \in {\widetilde {\rm Her}}_m({\mathcal O}_p)^{\times}$ put 
$$F_p^{(0)}(T,X)=F_p(p^{-{e}_p}T,X)$$ and 
$$\widetilde F_p^{(0)}(T,X)=\widetilde F_p(p^{-{e}_p}T,X).$$
We remark that 
$$\widetilde F_p^{(0)}(T,X)=X^{-{\rm ord}_p(\det T)}X^{e_pm-f_p[m/2]} F_p^{(0)}(T,p^{-m}X^2).$$
For $d \in {\bf Z}_p^{\times}$ put 
$$\lambda_{m,p}(d,X)=  \sum_{A \in \widetilde {\rm Her}_m(d,{\mathcal O}_p)/SL_{m}({\mathcal O}_p)} {\widetilde F_p^{(0)}(A,X) \over u_pl_{p,A}\alpha_p(A)}.$$
An explicit formula for  $\lambda_{m,p}(p^id_0,X)$ will be given in the next section for $d_0 \in {\bf Z}_p^*$ and
$i \ge 0.$ 

Now let $\widetilde {{\mathsf {Her}}}_m=\prod_p (\widetilde{{\rm Her}}_m({\mathcal O}_p)/SL_m({\mathcal O}_p)).$ Then  the diagonal embedding induces a mapping
$$\phi: \widetilde {{\rm Her}}_m(O)^+/\prod_p SL_m({\mathcal O}_p) \longrightarrow  \widetilde {{\mathsf {Her}}}_m.$$

\bigskip
\begin{prop}
In addition to the above notation and the assumption, for a positive integer $d$ let
$$\widetilde {{\mathsf {Her}}}_m(d)=\prod_p (\widetilde {{\rm Her}}_m(d,{\mathcal O}_p)/SL_m({\mathcal O}_p)).$$ Then  the mapping $\phi$ induces a bijection from $\widetilde {{\rm Her}}_m(d,O)^+/ \prod_p SL_m({\mathcal O}_p)$ to $\widetilde {{\mathsf {Her}}}_m(d),$ which will be denoted also by $\phi.$
\end{prop}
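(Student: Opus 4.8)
The plan is to split the statement into its formal part --- that $\phi$ is well defined and injective --- and its substantive part --- that $\phi$ is surjective, which is a local--global existence principle for Hermitian lattices. First I would record well-definedness and injectivity. An $SL_m({\mathcal O}_p)$-transformation preserves the determinant \emph{exactly}, since $\det(A[g])=N_{K_p/{\bf Q}_p}(\det g)\det A$ and $\det g=1$ for $g\in SL_m({\mathcal O}_p)$; hence a global $T\in\widetilde{{\rm Her}}_m(d,O)^+$ localizes at every $p$ to a class of determinant $d$, so $\phi$ does land in $\widetilde{{\mathsf {Her}}}_m(d)$. The source is by construction the set of global forms modulo everywhere-local $SL_m({\mathcal O}_p)$-equivalence (the proper genus); two global forms with the same image under $\phi$ are $SL_m({\mathcal O}_p)$-equivalent for every $p$ and are therefore already equal in the source. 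Thus injectivity is immediate from the definition of the source quotient, and the whole content of the proposition is the surjectivity of $\phi$.

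For surjectivity I would first observe that the target is essentially finite: for every prime $p$ outside the finite set $S$ of primes dividing $dD$, the $p$-part of $d$ is a unit, the corresponding local lattice is unimodular, and $\widetilde{{\rm Her}}_m(d,{\mathcal O}_p)/SL_m({\mathcal O}_p)$ consists of a single (standard) class. Given a tuple $(T_p)_p\in\widetilde{{\mathsf {Her}}}_m(d)$, almost all of whose components are this standard class, the plan has two steps. Step one: produce a global Hermitian space $V$ over $K$ of rank $m$ that is positive definite at the archimedean place and whose completion $V_p$ carries the form $T_p$ at every finite $p$. This is the Hasse principle for Hermitian forms over the imaginary quadratic field $K$ (Landherr--Jacobowitz, or Shimura): such a form is determined by its rank, its discriminant in ${\bf Q}^{\times}/N_{K/{\bf Q}}(K^{\times})$, and its archimedean signature, and a global form with prescribed completions exists exactly when the local discriminants satisfy the product formula. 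That compatibility holds automatically here because the local data were cut out from the single rational determinant $d$, while positivity at infinity merely fixes the signature to $(m,0)$, consistent with $d>0$. Step two: inside $V$ exhibit an ${\mathcal O}$-lattice whose localization at each $p$ is $SL_m({\mathcal O}_p)$-equivalent to $T_p$. Starting from any ${\mathcal O}$-lattice $L_0$ in $V$, one has $L_{0,p}\cong T_p$ for all $p\notin S$ by uniqueness of the standard class; at the finitely many $p\in S$ one replaces $L_{0,p}$ by a lattice isometric to $T_p$ (possible since $V_p$ already carries $T_p$) and patches, using that an ${\mathcal O}$-lattice is determined by its localizations, which agree with $L_0$ at all but finitely many places. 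The resulting lattice yields a global $T\in\widetilde{{\rm Her}}_m(O)^+$ with $\phi([T])=(T_p)_p$.

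The main obstacle lies in Step two at the ramified primes $p\mid D$. There the semi-integrality condition carries the scaling factor $p^{e_p}$ that distinguishes $\widetilde{{\rm Her}}_m$ from ${\rm Her}_m$, and one must verify that the patched local lattice genuinely lies in $\widetilde{{\rm Her}}_m(d,{\mathcal O}_p)$ with exactly the right determinant. Pinning the determinant to be exactly $d$ --- rather than correct only modulo the norm subgroup $N_{K_p/{\bf Q}_p}({\mathcal O}_p^{*})$ --- is precisely what makes the bijection land on the $SL_m$-refined local product rather than the coarser $GL_m$ one; this is where the exact determinant-preservation by $\mathcal{SU}_T$, and implicitly the index $l_{p,T}=[U_{1,p}:\widetilde{U}_{p,T}]$ measuring the gap between the two refinements, has to be tracked. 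Once these ramified local adjustments are checked, well-definedness of the determinant and positive-definiteness are inherited from the construction, and $\phi$ is a bijection.
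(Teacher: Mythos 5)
Your treatment of injectivity and of Step one (existence of a global positive definite Hermitian space with the prescribed localizations, via the Hasse principle) matches the paper, which cites Scharlau for the global existence and Jacobowitz for the local classification. The gap is in Step two, and it is exactly at the point you defer to "ramified local adjustments to be checked." Your patching argument produces a lattice $L$ in $V$ whose localization at each $p$ is \emph{isometric} to $T_p$, i.e.\ $GL_m({\mathcal O}_p)$-equivalent; but the target of $\phi$ consists of $SL_m({\mathcal O}_p)$-classes, and a single $GL_m({\mathcal O}_p)$-class of fixed determinant splits into $l_{p,T}=[U_{1,p}:\widetilde{U}_{p,T}]$ distinct $SL_m({\mathcal O}_p)$-classes, which can be more than one. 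No amount of local adjustment at the ramified primes resolves this: the refinement from $GL_m$ to $SL_m$ requires a global input. The paper supplies it as follows: after using the Hasse norm theorem to rescale $y$ so that $\det y=d$ exactly, the local transition matrices $g_p$ satisfy $N_{K_p/{\bf Q}_p}(\det g_p)=1$; one then corrects each $g_p$ by a local isometry $\delta_p$ of $x_p$ with $\det\delta_p=(\det g_p)^{-1}$ so that $g_p\delta_p\in SL_m(K_p)$, and finally applies strong approximation for $SL_m$ over $K$ to write $(g_p\delta_p)=\gamma\gamma_\infty(\gamma_p)$ with $\gamma\in SL_m(K)$ and $(\gamma_p)\in\prod_p SL_m({\mathcal O}_p)$. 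The global matrix $x=\gamma^* y\gamma$ then hits $(x_p)$ on the nose. Strong approximation is the missing idea in your write-up; without it (or an equivalent) the surjectivity onto the $SL_m$-refined product does not follow.

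A second, related gap: when the class number $h_K$ exceeds $1$ (the paper explicitly flags this as the complication relative to the Ibukiyama--Saito argument), the lattice obtained by patching local data inside $V$ is only locally free, hence projective, and its Steinitz class need not be trivial; a non-free lattice does not yield an element of $\widetilde{{\rm Her}}_m(d,{\mathcal O})^+$ at all. The paper's route avoids this entirely because $\gamma\in SL_m(K)$ acts directly on the matrix $y$, producing a genuine global Hermitian matrix. If you want to keep the lattice-patching language, you must additionally show the patched lattice can be chosen free, which again comes down to an approximation argument.
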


\begin{proof} The proof is similar to that of [\cite{I-S}, Proposition 2.1], but it is a little bit more complex because the class number of $K$ is not necessarily one.  It is easily seen that $\phi$ is injective.
Let $(x_p) \in \widetilde {\mathsf {Her}}_m(d).$  Then by Theorem 6.9 of \cite{Sch},  there exists an element  $y$ in ${\rm Her}_m(K)^+$ such that $\det y  \in dN_{K/{\bf Q}}(K^{\times}).$  Then we have $\det y \in  \det  x_pN_{K_p/{\bf Q}_p}(K_p^{\times})$ for any $p.$ Thus by [\cite{J}, Theorem 3.1]  we have 
  $x_p=g_p^*yg_p$ with some $g_p \in GL_m(K_p)$ for any prime number $p.$  For $p$ not dividing $Dd$ we may suppose $g_p \in GL_m(O_p).$ Hence $(g_p)$ defines an element of $R_{K/{\bf Q}}(GL_m)({\bf A}_f).$ Since we have $d^{-1}\det y \in {\bf Q}^{\times} \cap \prod_p N_{K_p/{\bf Q}_p}(K_p),$ we see that $d^{-1}\det y =N_{K/{\bf Q}}(u)$ with some $u \in K^{\times}.$ 
Thus, by replacing $y$ with $\smallmattwo(1_{m-1};O;O;\overline{u}^{-1})y \smallmattwo(1_{m-1};O;O;u^{-1}),$ we may suppose that $\det y=d.$ Then we have
$N_{K_p/{\bf Q}_p}(\det g_p)=1.$  It is easily seen that there exists an element $\delta_p \in GL_m(K_p)$ such that 
$\det \delta_p=\det g_p^{-1}$ and $\delta_p^* x_p \delta_p=x_p.$ Thus we have $g_p \delta_p \in SL_m(K_p)$ and
$$x_p=(g_p \delta_p)^* y g_p \delta_p.$$ 
By the strong approximation theorem for $SL_m$ there exists an element $\gamma \in SL_m(K), \gamma_{\infty} \in SL_m({\bf C}),$ and $(\gamma_p) \in \prod_p SL_m(O_p)$ such that
$$(g_p \delta_p)= \gamma\gamma_{\infty}(\gamma_p).$$
Put $x=\gamma^* y \gamma.$ Then $x$ belongs to  $\widetilde {\rm Her}_m(d,{\mathcal O})^+,$ and $\phi(x)=(x_p).$ This proves the surjectivity of $\phi.$
\end{proof}

\bigskip

\begin{thm}
 
  Let $f$ be a primitive form in ${\textfrak S}_{2k+1}(\varGamma_0(D),\chi)$  or in ${\textfrak S}_{2k}(SL_2({\bf Z}))$ according as $m=2n$ or $2n+1.$ For such an $f$ and a positive integer $d_0$ put
$$b_m(f;d_0)=\prod_p \lambda_{m,p}(d_0,\alpha_p^{-1}),$$
where $\alpha_p$ is the Satake $p$-parameter of $f.$ 
Moreover put
$$\mu_{m,k,D}=D^{m(s-k+l_0/2)+(k-l_0/2)[m/2]-m(m+1)/4-1/2}$$
$$ \times 2^{-c_Dm(s-k-2n-l_0/2)-m+1} \prod_{i=2}^{m} \Gamma_{\bf C}(i),$$
where $l_0=0$ or $1$ according as $m$ is even or odd. Then for ${\rm Re}(s) >>0,$ we have 
$$L(s,I_m(f))=\mu_{m,k,D} \sum_{d_0=1}^{\infty} b_m(f;d_0) d_0^{-s+k+2n+l_0/2}.$$

\end{thm}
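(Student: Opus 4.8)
The plan is to reduce the global sum defining $L(s,I_m(f))$ to a product of the local series $\lambda_{m,p}$ by grouping $SL_m(\mathcal O)$-classes into genera and then applying the mass formula together with the local-global bijection $\phi$ of Proposition 3.3. Since $I_m(f)=I_m(f)_1$ and $t_1=1,$ the Fourier coefficient collapses to $a(T)=|\gamma(T)|^{k-l_0/2}\prod_p \widetilde F_p(T,\alpha_p^{-1})$ for $T\in\widehat{\rm Her}_m(\mathcal O)^+.$ First I would substitute this into $L(s,I_m(f))$ and use $|\gamma(T)|=D^{[m/2]}\det T$ to extract a power of $D$ and lower the exponent of $\det T.$ I would then perform the change of variables $T\mapsto\widetilde D T,$ which is a bijection of $\widehat{\rm Her}_m(\mathcal O)^+/SL_m(\mathcal O)$ onto $\widetilde{\rm Her}_m(\mathcal O)^+/SL_m(\mathcal O)$ preserving $\mathcal{SU}_T$ (hence $e^*(T)$ and the genus, since $\mathcal{SU}_{cT}=\mathcal{SU}_T$ for a scalar $c$); under it $\widetilde F_p(T,X)=\widetilde F_p^{(0)}(\widetilde D T,X)$ and $\det$ is multiplied by $\widetilde D^m,$ so the summand is rewritten through the local objects $\widetilde F_p^{(0)}$ and the determinants $d_0=\det(\widetilde D T)\in{\bf Z}_{>0}$ that appear in the definition of $\lambda_{m,p}.$

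Next I would observe that $a(T)(\det T)^{-s}$ is a genus invariant: $\det T$ is a global invariant, while each factor $\widetilde F_p^{(0)}(\,\cdot\,,\alpha_p^{-1})$ depends only on the $SL_m(\mathcal O_p)$-class of the $p$-component, and all but finitely many factors equal $1.$ Grouping $SL_m(\mathcal O)$-classes into genera $\mathcal G(T),$ the inner sum becomes $\sum_{T'\in\mathcal G(T)}1/e^*(T')=M^*(T)$ by definition, at which point I would insert the mass formula of the Corollary to Proposition 3.2. This replaces $M^*(T)$ by an explicit expression carrying the factor $(\det T)^m\prod_{i=2}^m\Gamma_{\bf C}(i),$ fixed powers of $D$ and $2,$ and the reciprocal $\prod_p(u_p l_{p,T}\alpha_p(T))^{-1}.$ The decisive point is that $\prod_p\widetilde F_p^{(0)}(T,\alpha_p^{-1})$ coming from $a(T),$ multiplied by $\prod_p(u_p l_{p,T}\alpha_p(T))^{-1}$ coming from $M^*(T),$ recombines exactly into $\prod_p$ of the summand of $\lambda_{m,p}(d_0,\alpha_p^{-1}).$

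I would then sort the remaining sum over genera by the determinant $d_0.$ By Proposition 3.3 the map $\phi$ identifies the genera of determinant $d_0$ with the tuples in $\prod_p(\widetilde{\rm Her}_m(d_0,\mathcal O_p)/SL_m(\mathcal O_p)),$ and since the integrand is a product of purely local terms, the sum over these genera factors as $\prod_p\lambda_{m,p}(d_0,\alpha_p^{-1})=b_m(f;d_0).$ Collecting everything gives $L(s,I_m(f))=(\mathrm{const})\sum_{d_0=1}^{\infty}b_m(f;d_0)\,d_0^{-s+k+2n+l_0/2},$ where the exponent emerges from $(\det T)^{k-l_0/2-s}(\det T)^m$ together with $\det T=\widetilde D^{-m}d_0$ (note $m-l_0/2=2n+l_0/2$). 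It then remains to identify the constant with $\mu_{m,k,D};$ here I would use $\widetilde D=\prod_{p\mid D}p^{e_p}=2^{-c_D}D,$ which follows from $e_p=f_p-\delta_{2,p},$ to convert powers of $\widetilde D$ into powers of $D$ and $2,$ after which both the $D$-exponent and the $2$-exponent match those of $\mu_{m,k,D},$ the ramified contribution at the prime above $2$ being exactly absorbed by $c_D.$

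The main obstacle is the local-global recombination in the middle step: one must check that $\prod_p\widetilde F_p^{(0)}$ is genuinely a genus invariant and, above all, that the mass-formula weights $(u_p l_{p,T}\alpha_p(T))^{-1}$ fuse with the Fourier coefficient to reproduce precisely the summand of $\lambda_{m,p},$ so that Proposition 3.3 can factor the global sum into a product over $p.$ The accompanying bookkeeping of constants — especially the powers of $2$ at the ramified prime above $2$ encoded by $\delta_{2,p},$ $e_p,$ and $c_D$ — is delicate but routine. Throughout, absolute convergence for ${\rm Re}(s)\gg 0$ justifies the regrouping into genera and the interchange of the sum over $d_0$ with the product over $p.$
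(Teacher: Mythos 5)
Your proposal is correct and follows essentially the same route as the paper: rescaling by $\widetilde D$ to pass from $\widehat{\rm Her}_m({\mathcal O})^+$ to $\widetilde{\rm Her}_m({\mathcal O})^+,$ using genus-invariance of the Fourier coefficient, inserting the mass formula of the Corollary to Proposition 3.2 so that the weights $(u_p l_{p,T}\alpha_p(T))^{-1}$ fuse with $\prod_p\widetilde F_p^{(0)}$ into the summand of $\lambda_{m,p},$ and factoring the sum over genera of fixed determinant via Proposition 3.3. You have merely written out the bookkeeping (including the identity $\widetilde D=2^{-c_D}D$ and the exponent $m-l_0/2+k-l_0/2-s=-s+k+2n+l_0/2$) that the paper compresses into the phrase ``similarly to [I-S].''
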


\begin{proof}
We note that $L(s,I_m(f))$ can be rewritten as 
$$L(s,I_m(f))= \widetilde {D}^{ms} \sum_{T \in \widetilde {\rm Her}_m({\mathcal O})^+/SL_m({\mathcal O})} {a_{I_m(f)}({\widetilde D}^{-1}T) \over e^*(T) (\det T)^s }.$$
For $T \in \widetilde {\rm Her}_m({\mathcal O})^+$ the Fourier coefficient $a_{I_m(f)}({\widetilde D}^{-1}T)$ of $I_m(f)$ is uniquely determined by the genus to which $T$ belongs, and can be expressed as
$$a_{I_m(f)}({\widetilde D}^{-1}T)=(D^{[m/2]}\widetilde{D}^{-m}\det T)^{k-l_0/2}\prod_p \widetilde F_p^{(0)}(T,\alpha_p^{-1}).$$
Thus the assertion follows from  Corollary to Proposition 3.2 and Proposition 3.3 similarly to \cite{I-S}.
\end{proof}

\bigskip

\section{Formal power series associated with local Siegel series}
  For $d_0 \in {\bf Z}_p^{\times}$ put 
$$\hat P_{m,p}(d_0,X,t)= \sum_{i=0}^{\infty}\lambda_{m,p}^*(p^id_0,X)t^i,$$
where for $d \in {\bf Z}_p^{\times}$ we define $\lambda_{m,p}^*(d,X)$ as
$$\lambda_{m,p}^*(d,X)=
\sum_{A \in {\widetilde{\rm Her}}_m(dN_{K_p/{\bf Q}_p}({\mathcal O}_p^*),{\mathcal O}_p)/GL_{m}({\mathcal O}_p)}  {\widetilde F_p^{(0)}(A,X) \over \alpha_p(A)}.$$
We note that 
$$\sum_{A \in {\widetilde{\rm Her}}_m(dN_{K_p/{\bf Q}_p}({\mathcal O}_p^*),{\mathcal O}_p)/GL_{m}({\mathcal O}_p)}  {\widetilde F_p^{(0)}(A,X^{-1}) \over \alpha_p(A)}$$
 is $\chi_{K_p}((-1)^{m/2}d)\lambda_{m,p}^*(d,X)$ or $\lambda_{m,p}^*(d,X)$
according as $m$ is even and $K_p$ is a field,  or not.
In  Proposition 4.3.7 we will show that  we have  
$$\lambda_{m,p}^*(d,X)=u_p\lambda_{m,p}(d,X)$$
for $d \in {\bf Z}_p^{\times}$ and therefore   
$$\hat P_{m,p}(d_0,X,t)=u_p \sum_{i=0}^{\infty}\lambda_{m,p}(p^id_0,X)t^i.$$
We also define  $P_{m,p}(d_0,X,t)$ as 
$$P_{m,p}(d_0,X,t)= \sum_{i=0}^{\infty}\lambda_{m,p}^*(\pi_p^id_0,X)t^i.$$
We note that $P_{m,p}(d_0,X,t)=\hat P_{m,p}(d_0,X,t)$ if $K_p$ is unramified over ${\bf Q}_p$ or $K_p={\bf Q}_p \oplus {\bf Q}_p,$ but it is not necessarily the case if $K_p$ is ramified over ${\bf Q}_p.$
In this section, we give  explicit formulas of $P_{m,p}(d_0,X,t)$ for all prime numbers $p$ (cf. Theorems 4.3.1 and 4.3.2),
and therefore explicit formulas for $\hat P_{m,p}(d_0,X,t)$ (cf. Theorem 4.3.6.)

From now on we fix a prime number $p.$ Throughout this section we simply write ${\rm ord}_p$ as ${\rm ord}$ and so on if the prime number $p$ is clear from the context. We also write $\nu_{K_p}$ as $\nu.$ We also simply write $\widetilde {\rm Her}_{m,p}$ instead of $\widetilde{\rm Her}_m({\mathcal O}_p),$ and so on.

\subsection{Preliminaries}

\noindent
{ }

\bigskip

 Let $m$ be a positive integer. For a non-negative integer $i \le m$ let 
$${\mathcal D}_{m,i}=GL_m({\mathcal O}_p) \mattwo(1_{m-i};0;0;\varpi 1_i) GL_m({\mathcal O}_p),$$
 and for $W \in  {\mathcal D}_{m,i},$ put $\varPi_p(W)=(-1)^i p^{i(i-1)a/2},$
 where $a=2$ or $1$ according as $K_p$ is unramified over ${\bf Q}_p$ or not.
Let $K_p={\bf Q}_p \oplus {\bf Q}_p.$ Then for a pair $i=(i_1,i_2)$ of non-negative integers such that $i_1,i_2 \le m,$ let 
$${\mathcal D}_{m,i}=GL_m({\mathcal O}_p) \left(\mattwo(1_{m-i_1};0;0;p 1_{i_1}),\mattwo(1_{m-i_2};0;0;p 1_{i_2}) \right) GL_m({\mathcal O}_p),$$
 and for $W \in {\mathcal D}_{m,i}$ put $\varPi_p(W)=(-1)^{i_1+i_2} p^{i_1(i_1-1)/2+i_2(i_2-1)/2}.$ In either case $K_p$ is a quadratic extension of ${\bf Q}_p,$ or  $K_p={\bf Q}_p \oplus {\bf Q}_p,$ we put $\varPi_p(W)=0$ for $W \in M_n({\mathcal O}_p^{\times}) \setminus \bigcup_{i=0}^m {\mathcal D}_{m,i}.$ 

First we remark the following lemma, which can easily be proved by the usual Newton approximation method in ${\mathcal O}_p$:

\bigskip

\begin{lems}
Let $A,B \in \widetilde {\rm Her}_m({\mathcal O}_p)^{\times}.$ Let $e$ be an integer such that $p^{e}A^{-1} \in \widetilde {\rm Her}_m({\mathcal O}_p).$ Suppose that 
$A \equiv B \ {\rm mod} \ p^{e+1}\widetilde {\rm Her}_m({\mathcal O}_p).$
Then there exists a matrix  $U \in GL_m({\mathcal O}_p)$ such that 
$B=A[U].$
\end{lems}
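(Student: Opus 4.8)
The plan is to run a Newton--Hensel approximation, but first to rewrite the problem so that the apparent denominators coming from $A^{-1}$ disappear. Set $W=A^{-1}B$. From $B-A\in p^{e+1}\widetilde {\rm Her}_m({\mathcal O}_p)$, the hypothesis $p^{e}A^{-1}\in\widetilde {\rm Her}_m({\mathcal O}_p)$, and the elementary inclusion $\widetilde {\rm Her}_m({\mathcal O}_p)\cdot\widetilde {\rm Her}_m({\mathcal O}_p)\subseteq M_m({\mathcal O}_p)$, one gets $A^{-1}(B-A)\in p\,M_m({\mathcal O}_p)$, hence $W\in 1_m+pM_m({\mathcal O}_p)\subseteq GL_m({\mathcal O}_p)$. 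Since $A$ and $B$ are Hermitian, $W$ is self-adjoint for the twisted involution $Z\mapsto Z^{\dagger}:=A^{-1}Z^{*}A$, i.e. $W^{\dagger}=W$. Because $A[U]=U^{*}AU=B$ is equivalent to $U^{\dagger}U=W$, the task becomes: find $U\in GL_m({\mathcal O}_p)$ with $U\equiv 1_m \bmod p$ solving $U^{\dagger}U=W$ for a self-adjoint $W$ near $1_m$ — a square-root problem for $\dagger$. The point of this reduction is that the relative error $W-1_m$ is genuinely \emph{integral}, so the iteration will gain one power of $p$ at each step starting from the precision $e+1$ given in the hypothesis.

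Next I would iterate: $U_0=1_m$, $U_{k+1}=U_k(1_m+Y_k)$, keeping $U_k\equiv 1_m\bmod p$ and $E_k:=W-U_k^{\dagger}U_k\in p^{\,k+1}M_m({\mathcal O}_p)$ self-adjoint. Linearizing $U_{k+1}^{\dagger}U_{k+1}$ at $U_k$ reduces the correction to solving the $\mathcal O_p$-linear equation $Y_k+Y_k^{\dagger}=E_k$, equivalently $AY_k+Y_k^{*}A=AE_k$ with $AE_k$ Hermitian, and one wants $Y_k\in p^{\,k+1}M_m({\mathcal O}_p)$. For odd $p$ the solution $Y_k=\tfrac12 E_k$ is integral of the right order; writing $Y_k^{\dagger}=E_k-Y_k$ (which is then automatically integral) and expanding, the cross terms and the quadratic term push $E_{k+1}$ into $p^{\,k+2}M_m({\mathcal O}_p)$. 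Thus the corrections $Y_k\to 0$ $p$-adically, the partial products $U_k$ converge in $GL_m({\mathcal O}_p)$ to a $U$ with $U^{\dagger}U=W$ exactly and $U\equiv 1_m\bmod p$, and translating back yields $B=A[U]$.

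The main obstacle is exactly the \emph{integral} solvability of $Y+Y^{\dagger}=E_k$ (equivalently $AY+Y^{*}A=AE_k$) at the step where one would like to divide by $2$. Off the diagonal this is harmless, while on the diagonal it comes down to surjectivity of the trace ${\rm Tr}_{K_p/{\bf Q}_p}$ onto the relevant ${\bf Z}_p$-module. At unramified or split $p$ this is automatic; at a ramified prime, and most delicately at a wildly ramified $p=2$ where ${\rm Tr}_{K_p/{\bf Q}_p}({\mathcal O}_p)\subsetneq{\bf Z}_p$, it is precisely the normalization $\widetilde {\rm Her}_m({\mathcal O}_p)=p^{e_p}\widehat{\rm Her}_m({\mathcal O}_p)$ with $e_p=f_p-\delta_{2,p}$ — the suitably scaled dual lattice — that calibrates the diagonal trace condition so that the required integral solution exists and the iteration closes. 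This residue-characteristic-$2$ bookkeeping is the only subtle ingredient; the remainder is the routine Newton approximation in ${\mathcal O}_p$ referred to in the statement.
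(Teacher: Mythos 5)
Your strategy is the one the paper itself indicates (it gives no details beyond ``the usual Newton approximation method''), and the reduction to the square-root problem $U^{\dagger}U=W$ with the stated iteration is sound for $p$ odd and for $p=2$ unramified or split: in the odd case $Y_k=\tfrac12 E_k$ is integral and the exact error $C_{k+1}:=B-A[U_{k+1}]=-A_k[Y_k]=-\tfrac14\,p^{-e}\bigl(p^{e}A_k^{-1}\bigr)[C_k]$ visibly lies in $p^{e+k+2}\widetilde{\rm Her}_m({\mathcal O}_p)$, so the induction closes; in the other two cases ${\rm Tr}_{K_p/{\bf Q}_p}({\mathcal O}_p)={\bf Z}_p$ and the diagonal equation is solvable by inspection.

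The gap is at $p=2$ ramified, and it is not merely that the trace computation is omitted: the induction invariant you actually carry, namely ``$E_k\in p^{k+1}M_m({\mathcal O}_p)$ and $E_k^{\dagger}=E_k$,'' is too weak for the calibration you invoke to be usable after the first step. Solvability of $Y+Y^{\dagger}=E_k$ with $Y\in p^{k+1}M_m({\mathcal O}_p)$ is equivalent to $AE_k\in\{Z+Z^{*}\ :\ Z\in p^{k+1}AM_m({\mathcal O}_p)\}$, and membership of $AE_k$ in $p^{k+1}AM_m({\mathcal O}_p)\cap{\rm Her}_m(K_p)$ does not imply this: already for $m=1$, $K_2={\bf Q}_2(\sqrt{-1})$, $A=(2)$ one has $2{\mathcal O}_2\cap{\bf Q}_2=2{\bf Z}_2$ while $\{z+\overline{z}\ :\ z\in 2{\mathcal O}_2\}=4{\bf Z}_2$. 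What rescues the argument is the finer fact that $AE_k=B-A[U_k]$ stays in $p^{e+1+k}\widetilde{\rm Her}_m({\mathcal O}_p)$, i.e.\ the invariant must be the congruence $B\equiv A[U_k]\ {\rm mod}\ p^{e+1+k}\widetilde{\rm Her}_m({\mathcal O}_p)$ (the shape of the hypothesis with $e+1$ replaced by $e+1+k$); with only your invariant, the information that the diagonal of the error is divisible by the trace is lost after one step. One must then verify two things you do not address: that the linearized equation is integrally solvable for right-hand sides in this finer lattice (relative to a Jacobowitz normal form of $A$), and that the quadratic error $-A_k[Y_k]$ lands back in $p^{e+2+k}\widetilde{\rm Her}_m({\mathcal O}_p)$ --- which for $p=2$ no longer follows from $Y_k\in p^{k+1}M_m({\mathcal O}_p)$ alone when $k<e$, whereas for odd $p$ it is automatic from the explicit formula above. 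So you have named the right difficulty, but the bookkeeping as set up does not close it.
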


\bigskip

\bigskip
\begin{lems}
Let $S \in \widetilde {\rm Her}_m({\mathcal O}_p)^{\times}$ and $T \in \widetilde {\rm Her}_n({\mathcal O}_p)^{\times}$ with $m \ge n.$ Then 
$$\alpha_p(S,T)=\sum_{W \in GL_n({\mathcal O}_p) \backslash M_n({\mathcal O}_p)^{\times}} p^{(n-m)\nu(\det W)}\beta_p(S,T[W^{-1}]).$$
\end{lems}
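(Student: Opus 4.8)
The plan is to stratify the representations of $T$ by $S$ according to elementary divisors and to match each stratum with a primitive representation of a rescaled form.

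First I would record the factorization underlying the formula. Since $T$ is non-degenerate, every $X \in M_{mn}({\mathcal O}_p)$ with $S[X]=T$ has full column rank, so the column lattice $X{\mathcal O}_p^n$ sits inside its saturation in ${\mathcal O}_p^m$; choosing a primitive $X_0$ whose columns span this saturation, one writes $X=X_0W$ with a unique $W \in M_n({\mathcal O}_p)^{\times}$, the pair $(X_0,W)$ being determined up to $(X_0,W) \mapsto (X_0U,U^{-1}W)$ for $U \in GL_n({\mathcal O}_p)$. Fixing representatives $W$ of the cosets in $GL_n({\mathcal O}_p) \backslash M_n({\mathcal O}_p)^{\times}$ sets up a bijection between $\{X : S[X]=T\}$ and the disjoint union over $[W]$ of the sets of primitive $X_0$ with $S[X_0]=T[W^{-1}]$; indeed $S[X]=(S[X_0])[W]$, so $S[X]=T$ is equivalent to $S[X_0]=T[W^{-1}]$. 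I would also note that $S[X_0] \in \widetilde{\rm Her}_n({\mathcal O}_p)$ forces $T[W^{-1}] \in \widetilde{\rm Her}_n({\mathcal O}_p)$, so only finitely many classes $[W]$ contribute for fixed $T$ and $\beta_p(S,T[W^{-1}])=0$ otherwise, which guarantees convergence of the sum.

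Next I would transport this bijection to the level of densities. Interpreting $\alpha_p(S,T)$ and $\beta_p$ as $p$-adic fibre volumes, I would write $\alpha_p(S,T)=\sum_{[W]}\alpha_p^{(W)}(S,T)$ according to elementary-divisor type and compute, for a fixed representative $W$, the Jacobian of the change of variables $X=X_0W$. Right multiplication by $W$ is a $K_p$-linear endomorphism of $M_{mn}(K_p)$ with determinant $(\det W)^m$, hence scales the Haar measure on $M_{mn}({\mathcal O}_p)$ by $p^{-m\nu(\det W)}$, while the induced map $B \mapsto B[W]$ on $\widetilde{\rm Her}_n$ (through which the target coordinate transforms, since $S[X_0W]=(S[X_0])[W]$) scales the measure on $\widetilde{\rm Her}_n({\mathcal O}_p)$ by $p^{-n\nu(\det W)}$. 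The fibre measure therefore rescales by the quotient $p^{-m\nu(\det W)}/p^{-n\nu(\det W)}=p^{(n-m)\nu(\det W)}$, yielding $\alpha_p^{(W)}(S,T)=p^{(n-m)\nu(\det W)}\beta_p(S,T[W^{-1}])$, and summation over $[W]$ gives the assertion.

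Concretely, realising the same factor without fibre measures, I would compare the truncated counts: for $a$ large the type is detectable modulo $p^a$, and $X_0 \mapsto X_0W$ relates $\mathcal B_a(S,T[W^{-1}])$ to the type-$W$ part of $\mathcal A_a(S,T)$, the factor $p^{-m\nu(\det W)}$ arising because $M_{mn}({\mathcal O}_p)W$ has index $p^{m\nu(\det W)}$ in $M_{mn}({\mathcal O}_p)$, and the compensating factor $p^{n\nu(\det W)}$ arising from replacing the congruence modulo $p^a\widetilde{\rm Her}_n({\mathcal O}_p)$ by the coarser one modulo $p^a\widetilde{\rm Her}_n({\mathcal O}_p)[W^{-1}]$. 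The main obstacle is precisely this bookkeeping: one must check that the elementary-divisor stratification of the mod-$p^a$ solution sets is uniform in $a$ for the finitely many relevant $W$, and that the limit defining the densities commutes with the (effectively finite) sum over $[W]$, so that the two index factors combine cleanly into $p^{(n-m)\nu(\det W)}$. The remaining verifications—that right multiplication by $W$ has the stated modulus in the split, inert and ramified cases (where one uses $\#({\mathcal O}_p/\varpi^c{\mathcal O}_p)=p^{\nu(\varpi^c)}$) and that $S[X_0]$ stays semi-integral—are routine.
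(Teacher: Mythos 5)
Your strategy is exactly the one the paper uses (following Kitaoka's Theorem 5.6.1): stratify the mod-$p^a$ solution set by the elementary-divisor class $W \in GL_n({\mathcal O}_p)\backslash M_n({\mathcal O}_p)^{\times}$, pass from $X$ to $X_0=XW^{-1}$, and account for the two indices $[M_{mn}({\mathcal O}_p):M_{mn}({\mathcal O}_p)W]=p^{m\nu(\det W)}$ and $[p^a\widetilde{\rm Her}_n({\mathcal O}_p)[W^{-1}]:p^a\widetilde{\rm Her}_n({\mathcal O}_p)]=p^{n\nu(\det W)}$, which combine into $p^{(n-m)\nu(\det W)}$. All the factors you write down are correct, and the finiteness remark (only classes with $T[W^{-1}]$ semi-integral contribute) is the right way to control the sum.

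The gap is in the step you classify as routine bookkeeping: the claim that relaxing the congruence from $S[X_0]\equiv T[W^{-1}] \bmod p^a\widetilde{\rm Her}_n({\mathcal O}_p)$ to the coarser congruence $\bmod\ p^a\widetilde{\rm Her}_n({\mathcal O}_p)[W^{-1}]$ multiplies the primitive count by exactly the index $p^{n\nu(\det W)}$. Splitting the coarse congruence over representatives $R_i$ of $p^a\widetilde{\rm Her}_n({\mathcal O}_p)[W^{-1}]/p^a\widetilde{\rm Her}_n({\mathcal O}_p)$ expresses that count as $\sum_i \#\{X_0 \ {\rm primitive} : S[X_0]\equiv T[W^{-1}]+R_i\}$, and one must show all $p^{n\nu(\det W)}$ summands are equal; a priori the count could vary with the coset, so this equidistribution is not formal. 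The paper obtains it from Lemma 4.1.1 (the Newton-approximation lemma): since $\nu(\det W)\le {\rm ord}(\det T)$, for $a$ large each $R_i$ is divisible enough that $T[W^{-1}]+R_i=(T[W^{-1}])[G_i]$ for some $G_i\in GL_n({\mathcal O}_p)$, and right multiplication by $G_i$ is a primitivity-preserving bijection between the solution sets, so every coset contributes $\#{\mathcal B}_a(S,T[W^{-1}])$. The same issue is hidden in your measure-theoretic paragraph: asserting that "the fibre measure rescales by the quotient" of the two Jacobians presupposes that the density $\beta_p(S,\cdot)$ is locally constant at $T[W^{-1}]$, i.e.\ can be computed along the coarser filtration $p^a\widetilde{\rm Her}_n({\mathcal O}_p)[W^{-1}]$ — again precisely the content of Lemma 4.1.1. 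By contrast, the items you single out as the main obstacle (uniformity of the stratification in $a$, interchanging the limit with the effectively finite sum) really are routine; the Hensel-type input is the one ingredient your outline is missing, and it is the only non-formal ingredient in the paper's proof.
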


\begin{proof} 
The assertion can be proved by using the same argument as in the proof of [\cite{Ki2}, Theorem 5.6.1].
We here give an outline of the proof. For each $W \in M_n({\mathcal O}_p),$ put
$${\mathcal B}_e(S,T;W)=\{X \in {\mathcal A}_e(S,T) \ | \ XW^{-1} \ {\rm is \ primitive} \}.$$
Then we have
$${\mathcal A}_e(S,T)=\bigsqcup_{W \in GL_n({\mathcal O}_p) \backslash M_n({\mathcal O}_p)^{\times}} {\mathcal B}_e(S,T;W).$$
Take a sufficiently large integer $e,$ and for an element  $W$ of $M_n({\mathcal O}_p),$ let $\{R_i\}_{i=1}^r$ be a complete set of representatives of 
$p^e \widetilde {\rm Her}_m({\mathcal O}_p)[W^{-1}]/p^e\widetilde {\rm Her}_m({\mathcal O}_p).$ Then 
we have  $r=p^{\nu(\det W)n}.$ Put
$$\widetilde {\mathcal B}_e(S,T;W)=\{X \in M_{mn}({\mathcal O}_p)/p^eM_{mn}({\mathcal O}_p)W \ | \ S[X] \equiv T \ {\rm mod} \ p^e \widetilde {\rm Her}_m({\mathcal O}_p) $$
$$ \ {\rm and} \ XW^{-1} \ {\rm is \ primitive} \}.$$ 
Then 
$$\#(\widetilde {\mathcal B}_e(S,T;W))=p^{\nu(\det W)m}\#({\mathcal B}_e(S,T;W)).$$
It is easily seen that 
$$S[XW^{-1}] \equiv T[W^{-1}] +R_i \ {\rm mod} \ p^e \widetilde {\rm Her}_m({\mathcal O}_p)$$
for some $i.$ Hence the mapping $X \mapsto XW^{-1}$ induces a bijection from 
$\widetilde {\mathcal B}_e(S,T;W)$ to $\displaystyle \bigsqcup_{i=1}^r {\mathcal B}_e(S,T[W^{-1}]+R_i).$ Recall that $\nu(W) \le {\rm ord}(\det T).$ Hence 
$$R_i \equiv O \ {\rm mod} \ p^{[e/2]}\widetilde {\rm Her}_m({\mathcal O}_p),$$
and therefore by Lemma 4.1.1, 
$$T[W^{-1}]+R_i =T[W^{-1}][G]$$
 for some $G \in GL_n({\mathcal O}_p).$ Hence
 $$\#(\widetilde {\mathcal B}_e(S,T;W))=p^{\nu(\det W)n}\#({\mathcal B}_e(S,T[W^{-1}])).$$
 Hence
 $$\alpha_p(S,T)=p^{-2mne+n^2e}\#({\mathcal A}_e(S,T))$$
 $$=p^{-2mne+n^2e}\sum_{W \in GL_n({\mathcal O}_p) \backslash M_n({\mathcal O}_p)^{\times}}   p^{\nu(\det W)(-m+n)}\#({\mathcal B}_e(S,T[W^{-1}])).$$
 This proves the assertion.

\end{proof}

\bigskip
Now  by using the same argument as in the proof of [\cite{Ki0}, Theorem 1],  we obtain

\begin{xcor}
Under the same notation as above, we have
$$\beta_p(S,T)=\sum_{W \in GL_n({\mathcal O}_p) \backslash M_n({\mathcal O}_p)^{\times}} p^{(n-m)\nu(\det W)}\varPi_p(W)\alpha_p(S,T[W^{-1}]).$$
\end{xcor}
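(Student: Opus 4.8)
The statement to prove is the Corollary inverting the relation of Lemma 4.1.3. We have established that
\begin{equation*}
\alpha_p(S,T)=\sum_{W \in GL_n({\mathcal O}_p) \backslash M_n({\mathcal O}_p)^{\times}} p^{(n-m)\nu(\det W)}\beta_p(S,T[W^{-1}]),
\end{equation*}
and the goal is to solve this for $\beta_p$ in terms of the $\alpha_p$ via the weights $\varPi_p(W)$. The plan is to treat this as a M\"obius-type inversion over the monoid $GL_n({\mathcal O}_p) \backslash M_n({\mathcal O}_p)^{\times}$ of nondegenerate integral matrices modulo left $GL_n({\mathcal O}_p)$-equivalence, exactly as in the reference [\cite{Ki0}, Theorem 1]. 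The function $\varPi_p$ is, by its very definition on the double cosets ${\mathcal D}_{m,i}$, built precisely to be the inverse kernel for this convolution.

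First I would organize both sides around the Hecke-algebra convolution structure on $M_n({\mathcal O}_p)^{\times}/\!\sim$. Each of $\alpha_p(S,\,\cdot\,)$ and $\beta_p(S,\,\cdot\,)$ is a function on equivalence classes of forms $T$, and the map $T \mapsto T[W^{-1}]$ together with the weight $p^{(n-m)\nu(\det W)}$ defines an action that is associative and compatible with composition of the $W$'s. The key algebraic fact I would isolate is that the generating identity of Lemma 4.1.3 can be written as $\alpha_p = \beta_p * \eta$, where $\eta(W) = p^{(n-m)\nu(\det W)}$ is supported on all of $M_n({\mathcal O}_p)^{\times}/\!\sim$, and that $\varPi_p$ supplies the convolution inverse of $\eta$. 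Concretely I would verify the orthogonality relation
\begin{equation*}
\sum_{W' } p^{(n-m)\nu(\det W')}\,\varPi_p(W'') \;=\; \delta,
\end{equation*}
summed over all factorizations of a fixed class into $W = W'' W'$, which collapses to the identity double coset. This is the computational heart, and it reduces to the single-index local computation on the Cartan decomposition $\mattwo(1_{m-i};0;0;\varpi 1_i)$ where the alternating signs $(-1)^i$ and the $q$-binomial counting of sublattices produce the required cancellation.

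With the inversion kernel verified, I would then substitute: applying $\varPi_p$ against the Lemma 4.1.3 expansion and interchanging the two sums over $W$, the inner sum becomes the orthogonality relation above, leaving only the identity term and yielding $\beta_p(S,T)$ on the left. This gives exactly
\begin{equation*}
\beta_p(S,T)=\sum_{W \in GL_n({\mathcal O}_p) \backslash M_n({\mathcal O}_p)^{\times}} p^{(n-m)\nu(\det W)}\varPi_p(W)\alpha_p(S,T[W^{-1}]),
\end{equation*}
since composing the substitution $T \mapsto T[W^{-1}]$ twice combines the determinant-valuation weights additively and the $\varPi_p$-weights multiplicatively in the way the orthogonality demands.

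The main obstacle I anticipate is not the formal inversion but verifying the orthogonality relation in the ramified and the split cases uniformly. In the split case $K_p = {\bf Q}_p \oplus {\bf Q}_p$ the double cosets are indexed by \emph{pairs} $i=(i_1,i_2)$ and the weight $\varPi_p$ factors as a product over the two components, so the cancellation must be checked as a product of two independent single-factor identities; one has to confirm that the exponent $i_1(i_1-1)/2 + i_2(i_2-1)/2$ and the sign $(-1)^{i_1+i_2}$ are exactly compatible with the split form of $\nu(\det W)$ and with the sublattice counts over ${\bf Z}_p/p{\bf Z}_p$ taken componentwise. The saving grace is that the definition of $\varPi_p$ has already packaged the unramified parameter $a$ and the split factorization so that the same inversion argument of [\cite{Ki0}] goes through verbatim once the bookkeeping of $\nu$ versus ${\rm ord}$ is tracked carefully; this is why I would lean on that reference for the detailed lattice-counting and only present the structural inversion argument in full.
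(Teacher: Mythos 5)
Your proposal is correct and is essentially the paper's own argument: the paper proves this corollary simply by invoking ``the same argument as in the proof of [Ki0, Theorem 1],'' i.e.\ exactly the M\"obius-type inversion you describe, with $\varPi_p$ serving as the convolution inverse via the alternating $q$-binomial orthogonality on the double cosets ${\mathcal D}_{m,i}$ (taken componentwise in the split case). The only slip is a harmless label: the relation you are inverting is that of Lemma 4.1.2, not 4.1.3.
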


\bigskip
For two elements $A,A' \in {\rm Her}_m({\mathcal O}_p)$ we simply write $A \sim_{GL_m({\mathcal O}_p)} A'$ as $A \sim A'$ if there is no fear of confusion. 
For a variables $U$ and $q$ put
$$(U,q)_m=\prod_{i-1}^{m}(1-q^{i-1}U), \qquad  \phi_m(q)=(q,q)_m.$$
We note that $\phi_m(q)=\prod_{i=1}^m (1-q^i).$
Moreover for a prime number $p$ put
 $$\phi_{m,p}(q)=\left\{\begin{array}{ll}
 \phi_m(q^2) \ & \ {\rm if} \ K_p/{\bf Q}_p \ {\rm is \ unramified} \\
 \phi_m(q)^2 \ & \ {\rm if} \ K_p={\bf Q}_p \oplus {\bf Q}_p \\
 \phi_m(q)   \ & \ {\rm if} \  K_p/{\bf Q}_p \ {\rm is \ ramified}
 \end{array}
 \right.$$

\bigskip

\begin{lems}
 {\rm (1)} Let $\Omega(S,T)=\{w \in M_m({\mathcal O}_p) \ | \ S[w] \sim T \}.$ 
 Then we have
$${\alpha_p(S,T) \over \alpha_p(T)}=\#(\Omega(S,T)/GL_m({\mathcal O}_p))p^{-m({\rm ord}(\det T)- {\rm ord}(\det S))}.$$

{\rm (2)} Let $\widetilde \Omega(S,T)=\{w \in M_m({\bf Z}) \ | \ S \sim T[w^{-1}] \}.$ 
Then we have  
$${\alpha_p(S,T) \over \alpha_p(S)}=\#(GL_m({\mathcal O}_p) \backslash \widetilde \Omega(S,T)).$$
\end{lems}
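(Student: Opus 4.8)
The plan is to handle both parts by counting $\#{\mathcal A}_a(S,T)$ for large $a$ and stratifying its elements by a genuine $GL_m({\mathcal O}_p)$-class, exactly in the spirit of the proof of Lemma 4.1.2. Since $S$ and $T$ both have degree $m$ we have $\alpha_p(S,T)=\lim_{a\to\infty}p^{-am^2}\#{\mathcal A}_a(S,T)$, and for $X\in{\mathcal A}_a(S,T)$ the identity $\det(S[X])=N_{K_p/{\bf Q}_p}(\det X)\det S$ forces $\nu(\det X)=\delta:={\rm ord}(\det T)-{\rm ord}(\det S)$ as soon as $a>{\rm ord}(\det T)$. Moreover, by Lemma 4.1.1 the congruence $S[X]\equiv T\bmod p^a\widetilde{\rm Her}_m({\mathcal O}_p)$ already forces $S[X]\sim T$, so every lift of such an $X$ lies in $\Omega(S,T)$.

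For (1) I would construct the reduction map $\psi\colon{\mathcal A}_a(S,T)\to\Omega(S,T)/GL_m({\mathcal O}_p)$ sending $X$ to the class of any lift. It is well defined because two lifts agreeing mod $p^a$ differ by a right factor $1+X^{-1}(X'-X)\in GL_m({\mathcal O}_p)$ once $a$ is large (here $\nu(\det X)=\delta$ bounds the denominators of $X^{-1}$), and right multiplication preserves $\Omega(S,T)$ and its class. It is surjective because each class has an exact representative: if $S[w]\sim T$, write $S[w]=T[h]$ with $h\in GL_m({\mathcal O}_p)$ and replace $w$ by $wh^{-1}$, so that $S[wh^{-1}]=T$. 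Fixing such a representative $w$, with $S[w]=T$, the fiber $\psi^{-1}([w])$ is $\{wU\bmod p^a : U\in{\mathcal A}_a(T,T)\}$, using $S[wU]=T[U]$. The main obstacle is that $U\mapsto wU\bmod p^a$ is many-to-one, so I must prove the fiber has size exactly $p^{-m\delta}\#{\mathcal A}_a(T,T)$.

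The crux is therefore the count $\#\{U\in{\mathcal A}_a(T,T): wU\equiv w\bmod p^a\}=p^{m\delta}$. Writing $U=1+p^aw^{-1}V$ with $V\in M_m({\mathcal O}_p)$, the condition $T[U]\equiv T$ reduces modulo $p^a\widetilde{\rm Her}_m({\mathcal O}_p)$ to $T(w^{-1}V)+(T(w^{-1}V))^{*}\in\widetilde{\rm Her}_m({\mathcal O}_p)$, and this holds for \emph{every} $V$ precisely because the exact representative satisfies $Tw^{-1}=w^{*}S\in M_m({\mathcal O}_p)$. Since $\{w^{-1}V\bmod M_m({\mathcal O}_p)\}$ has cardinality $[w^{-1}M_m({\mathcal O}_p):M_m({\mathcal O}_p)]=p^{m\delta}$ (a one-line elementary-divisor computation, uniform over the unramified, ramified, and split cases because $\nu(\det w)=\delta$), the kernel has size $p^{m\delta}$, giving the claimed fiber size. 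Summing over the finitely many classes and letting $a\to\infty$ yields $\alpha_p(S,T)=\#(\Omega(S,T)/GL_m({\mathcal O}_p))\,p^{-m\delta}\,\alpha_p(T)$, which is (1).

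For (2), rather than repeating the count I would invoke Lemma 4.1.2 with $n=m$, namely $\alpha_p(S,T)=\sum_{W\in GL_m({\mathcal O}_p)\backslash M_m({\mathcal O}_p)^{\times}}\beta_p(S,T[W^{-1}])$, the weight $p^{(m-m)\nu(\det W)}$ being $1$. Because $S$ and each $T[W^{-1}]$ have full degree $m$, every primitive representation is invertible, so $\beta_p(S,T[W^{-1}])$ equals $\alpha_p(S)$ when $T[W^{-1}]\in\widetilde{\rm Her}_m({\mathcal O}_p)$ with $T[W^{-1}]\sim S$, and vanishes otherwise. Hence the sum collapses to $\alpha_p(S)$ times the number of classes $W$ with $S\sim T[W^{-1}]$, i.e. $\alpha_p(S)\cdot\#(GL_m({\mathcal O}_p)\backslash\widetilde\Omega(S,T))$, which is (2). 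The only points I expect to need care here are the identification $\beta_p(S,B)=\alpha_p(S)$ for a full-rank $B\sim S$ and the bookkeeping that $T[W^{-1}]$ is integral exactly when it contributes.
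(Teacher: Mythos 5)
Your proposal is correct. Part (2) is exactly the paper's argument: specialize Lemma 4.1.2 to $n=m$, note that a primitive square matrix over ${\mathcal O}_p$ is invertible so that $\beta_p(S,B)$ equals $\alpha_p(S)$ or $0$ according as $B\sim S$ or not, and collapse the sum. For part (1) the underlying decomposition is the same --- $\Omega(S,T)$ broken into right $GL_m({\mathcal O}_p)$-cosets, with Lemma 4.1.1 converting congruences mod $p^a$ into genuine equivalences --- but the packaging is genuinely different. Following B{\"o}cherer--Sato, the paper evaluates the $p$-adic volume $\int_{\Omega(S,T)}|dx|$ in two ways: as $p^{-2m^2e}\sum_i\#{\mathcal A}_e(S,T_i)$ over the mod-$p^e$ orbit $\{T_i\}$ of $T$, whose length is $\#GL_m({\mathcal O}_p/p^e{\mathcal O}_p)/\#{\mathcal A}_e(T,T)$, and as a sum of coset volumes $|\det W|_{K_p}^m$. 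You instead fiber ${\mathcal A}_a(S,T)$ directly over $\Omega(S,T)/GL_m({\mathcal O}_p)$ and compute each fiber as $p^{-m\delta}\#{\mathcal A}_a(T,T)$ via the index $[w^{-1}M_m({\mathcal O}_p):M_m({\mathcal O}_p)]=p^{m\delta}$, which avoids the Haar measure and the orbit-length count. The one step that genuinely needed care is the one you flagged: that the entire coset $U_0+p^aw^{-1}M_m({\mathcal O}_p)$ stays inside ${\mathcal A}_a(T,T)$, including in the ramified case where $\widetilde{\rm Her}_m({\mathcal O}_p)$ is a proper sublattice of ${\rm Her}_m({\mathcal O}_p)$; your observation that $Tw^{-1}=w^{*}S$ is integral, combined with the polarization identity $A+A^{*}=S[w+V]-S[w]-S[V]\in\widetilde{\rm Her}_m({\mathcal O}_p)$, settles this, and although you write it only at the base point $U_0=1$ it transports verbatim to any $U_0$, which is what is needed to see that every nonempty fiber of $U\mapsto wU$ is a full coset of the kernel. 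Both routes are sound; yours is more elementary and self-contained, while the paper's reuses the volume formalism of [B-S] that it also invokes later (e.g.\ in the proof of Proposition 4.3.3).
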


\bigskip

\begin{proof} (1) The proof is similar to that of Lemma 2.2 of \cite{B-S}. First we prove
$$\int_{\Omega(S,T)} |dx| =\phi_{m,p}(p^{-1}){\alpha_p(S,T) \over \alpha_p(T)},$$
where $|dx|$ is the Haar measure on $M_{m}(K_p)$ normalized so that 
$$\int_{M_{m}({\mathcal O}_p)}|dx|=1.$$
To prove this, for a positive integer $e$ let
$T_1,...,T_l$ be a complete set of representatives of $\{T[\gamma] \ {\rm mod} \ p^e \ | \ \gamma \in GL_m({\mathcal O}_p)\}.$ Then it is easy to see that
$$\int_{\Omega(S,T)} |dx|=p^{-2m^2e}\sum_{i=1}^l \#({\mathcal A}_e(S,T_i))$$
and, by Lemma 4.1.1,  $T_i$ is $GL_m({\mathcal O}_p)$-equivalent to $T$ if $e$ is sufficiently large. Hence we have 
$$\#({\mathcal A}_e(S,T_i))=\#({\mathcal A}_e(S,T))$$
for any $i.$ Moreover we have
$$l=\#(GL_m({\mathcal O}_p/p^e{\mathcal O}_p))/\#({\mathcal A}_e(T,T))=p^{m^2e}\phi_{m,p}(p^{-1})/\alpha_p(T).$$
Hence 
$$\int_{\Omega(S,T)} |dx| =lp^{-2m^2e}\#({\mathcal A}_e(S,T))=\phi_{m,p}(p^{-1}){\alpha_p(S,T) \over \alpha_p(T)},$$
which proves the above equality. Now we have
$$\int_{\Omega(S,T)} |dx| =\sum_{W \in \Omega(S,T)/GL_m({\mathcal O}_p)} |\det W|_{K_p}^m=\sum_{W \in \Omega(S,T)/GL_m({\mathcal O}_p)} |\det W \overline{\det W}|_p^m.$$
Remark that for any $W \in \Omega(S,T)/GL_m({\mathcal O}_p)$ we have
$|\det W \overline{\det W}|_p=p^{-m({\rm ord}(\det T)- {\rm ord}(\det S))}.$
Thus the assertion has been proved.

(2) By Lemma 4.1.2 we have
$$\alpha_p(S,T)=\sum_{W \in GL_m({\mathcal O}_p) \backslash  M_m({\mathcal O}_p)^{\times}} \beta_p(S,T[W^{-1}]).$$
Then we have 
$\beta_p(S,T[W^{-1}])=\alpha_p(S)$ or $0$ according as $S \sim T[W^{-1}]$ or not.
Thus the assertion (2) holds.
\end{proof}

\bigskip

For a subset ${\mathcal T}$ of ${\mathcal O}_p,$ we put 
$${\rm Her}_m({\mathcal T})_k=\{ A=(a_{ij})  \in {\rm Her}_m({\mathcal T}) \ | \  a_{ii} \in \pi^k{\bf Z}_p \}.$$
 
 From now on put
 $${\rm Her}_{m,*}({\mathcal O}_p)=\left\{\begin{array}{ll}
  {\rm Her}_{m}({\mathcal O}_p)_1 \ & {\rm if} \ p=2 \ {\rm and} \ f_p=3, \\
 {\rm Her}_{m}(\varpi {\mathcal O}_p)_1 \ & {\rm if} \ p=2 \ {\rm and} \ f_p=2 \\
 {\rm Her}_{m}({\mathcal O}_p) \ & \ {\rm  otherwise,} 
 \end{array}
 \right.$$
 where $\varpi$ is a prime element of $K_p.$ 
Moreover put  $i_p=0,$ or $1$ according as $p=2$ and $f_2=2,$ or not.
 Suppose that $K_p/{\bf Q}_p$ is unramified  or $K_p={\bf Q}_p \oplus {\bf Q}_p.$  Then an element $B$ of $\widetilde{\rm Her}_{m}({\mathcal O}_p)$ can be expressed as 
 $B \sim_{GL_m({\mathcal O}_p)} 1_r \bot pB_2$ with some integer $r$ and $B_2 \in {\rm Her}_{m-r,*}({\mathcal O}_p).$ 
Suppose that $K_p/{\bf Q}_p$ is ramified. For an even positive integer $r$ define $\Theta_r$ by 
$$\Theta_{r}= \overbrace{\mattwo(0;\varpi^{i_p};\overline {\varpi}^{i_p};0) \bot ...\bot \mattwo(0;\varpi^{i_p};\overline{\varpi}^{i_p};0)}^{r/2},$$
where $\overline{\varpi}$ is the conjugate of $\varpi$ over ${\bf Q}_p.$
 Then an element $B$ of $\widetilde{\rm Her}_{m}({\mathcal O}_p)$ is expressed as $B \sim_{GL_m({\mathcal O}_p)} \Theta_r \bot \pi^{i_p}B_2$ with some even integer $r$ and $B_2 \in {\rm Her}_{m-r,*}({\mathcal O}_p).$
For these results, see Jacobowitz \cite{J}.





A non-degenerate square matrix $W=(d_{ij})_{m \times m}$ with entries in ${\mathcal O}_p$ is called reduced if $W$ satisfies the following conditions:

$d_{ii}=p^{e_{i}}$ with $e_i$ a non-negative integer, $d_{ij}$ is a non-negative integer $\le p^{e_j}-1$ for $i <j$ and $d_{ij}=0$ for $i >j.$ 
It is well known that  we can take the set of all reduced matrices as a  complete set of representatives of $GL_m({\mathcal O}_p) \backslash M_m({\mathcal O}_p)^{\times}.$ Let $m$ be an integer.  For $B \in {\widetilde{\rm Her}}_{m}({\mathcal O}_p)$ put
$$\widetilde \Omega(B)=\{W \in GL_m(K_p) \cap M_m({\mathcal O}_p) \ | \ B[W^{-1}] \in \widetilde{\rm Her}_{m}({\mathcal O}_p) \}.$$
 Let $r \le m,$ and $\psi_{r,m}$ be the mapping from $GL_{r}(K_p)$ into $GL_{m}(K_p)$ defined by $\psi_{r,m}(W)=1_{m-r} \bot W.$ 

\bigskip
\begin{lems}

{\rm (1)} Assume that  $K_p$ is unramified over  ${\bf Q}_p$ or $K_p={\bf Q}_p \oplus {\bf Q}_p.$ Let $B_1 \in {\rm Her}_{m-n_0}({\mathcal O}_p).$ Then $\psi_{m-n_0,m}$ induces a bijection from $GL_{m-n_0}({\mathcal O}_p) \backslash \widetilde \Omega(B_1)$ to $GL_{m}({\mathcal O}_p) \backslash \widetilde \Omega(1_{n_0} \bot B_1),$
which will be also denoted by $\psi_{m-n_0,m}.$ 

\noindent
{\rm (2)} Assume that  $K_p$ is ramified over ${\bf Q}_p$ and that $n_0$ is even. Let $B_1 \in \widetilde {\rm Her}_{m-n_0}({\mathcal O}_p).$ Then $\psi_{m-n_0,m}$ induces a bijection from 
$GL_{m-n_0}({\mathcal O}_p) \backslash \widetilde \Omega(B_1)$ to $GL_{m}({\mathcal O}_p) \backslash \widetilde \Omega(\Theta_{n_0} \bot B_1),$
which will be also denoted by $\psi_{m-n_0,m}.$ Here $i_p$ is the integer defined above.
\end{lems}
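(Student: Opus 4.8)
The plan is to show that the map $\psi_{m-n_0,m}(W)=1_{m-n_0}\bot W$ (respectively $\Theta_{n_0}\bot W$ in the ramified case) gives a well-defined bijection on the indicated orbit sets. I would first verify that $\psi_{m-n_0,m}$ sends $\widetilde\Omega(B_1)$ into $\widetilde\Omega(1_{n_0}\bot B_1)$: for $W\in\widetilde\Omega(B_1)$ one has $B_1[W^{-1}]\in\widetilde{\rm Her}_{m-n_0}({\mathcal O}_p)$, and since $\psi_{m-n_0,m}(W)^{-1}=1_{n_0}\bot W^{-1}$, the matrix $(1_{n_0}\bot B_1)[1_{n_0}\bot W^{-1}]=1_{n_0}\bot B_1[W^{-1}]$ is again semi-integral, so membership in $\widetilde\Omega$ is preserved. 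Compatibility with the left $GL$-action is immediate because $\psi_{m-n_0,m}$ is a block-diagonal homomorphism, so the map descends to the orbit spaces. Injectivity is the easy direction: if $1_{n_0}\bot W$ and $1_{n_0}\bot W'$ lie in one $GL_m({\mathcal O}_p)$-orbit, the conjugating element must preserve the block structure modulo $GL_{m-n_0}({\mathcal O}_p)$, forcing $W\sim W'$.

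The substance is surjectivity: given $V\in\widetilde\Omega(1_{n_0}\bot B_1)$, I must produce $U\in GL_m({\mathcal O}_p)$ with $UV=1_{n_0}\bot W$ for some $W\in\widetilde\Omega(B_1)$. First I would reduce $V$ to a reduced (Hermite normal form) representative of its $GL_m({\mathcal O}_p)$-orbit using the reduced-matrix normal form recalled just before the lemma. The key structural input is that the first $n_0$ columns of $1_{n_0}\bot B_1$ form the unimodular block $1_{n_0}$, which is anisotropic/unimodular; the condition $(1_{n_0}\bot B_1)[V^{-1}]\in\widetilde{\rm Her}_m({\mathcal O}_p)$ then constrains $V^{-1}$ so that, after multiplying by a suitable $U\in GL_m({\mathcal O}_p)$, the top-left $n_0\times n_0$ block decouples and becomes $1_{n_0}$. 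Here I expect to invoke Lemma 4.1.1 (the Newton-approximation normal-form lemma) to absorb the off-diagonal error terms: once $V$ is arranged so that $(1_{n_0}\bot B_1)[V^{-1}]$ agrees with $1_{n_0}\bot(\text{something})$ modulo a high power of $p$, Lemma 4.1.1 upgrades this congruence to an exact $GL_m({\mathcal O}_p)$-equivalence, yielding the block form $1_{n_0}\bot W$ with $B_1[W^{-1}]\in\widetilde{\rm Her}_{m-n_0}({\mathcal O}_p)$.

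For part (2) the strategy is identical, with $\Theta_{n_0}$ replacing $1_{n_0}$; the essential point is that $\Theta_{n_0}$ is a unimodular Hermitian matrix over ${\mathcal O}_p$ in the ramified case (each $2\times 2$ block $\smallmattwo(0;\varpi^{i_p};\overline\varpi^{i_p};0)$ has unit determinant up to the scaling built into $\widetilde{\rm Her}$), so the same decoupling of the $\Theta_{n_0}$-block goes through once $n_0$ is even, which is exactly why the evenness hypothesis is imposed. I would also track that the ramified semi-integrality condition (defined via $\widetilde{\rm Her}_m({\mathcal O}_p)=p^{e_p}\widehat{\rm Her}_m({\mathcal O}_p)$) is preserved under the block splitting, using the normal-form results of Jacobowitz \cite{J} quoted above.

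The main obstacle will be surjectivity, specifically showing that an arbitrary $V\in\widetilde\Omega(1_{n_0}\bot B_1)$ can genuinely be normalized so that its action leaves the unimodular block unchanged rather than merely equivalent to it. The difficulty is that $V^{-1}$ need not respect the block decomposition a priori, and the semi-integrality constraint couples the blocks; the careful bookkeeping of which powers of $p$ (or $\varpi$) appear in the off-diagonal entries of the reduced form of $V$, together with a clean application of Lemma 4.1.1 to eliminate those entries, is where the real work lies. In the ramified case the additional subtlety is that $\widetilde{\rm Her}$ and ${\rm Her}$ differ by the factor $p^{e_p}$, so I would need to confirm the level matches up correctly when splitting off $\Theta_{n_0}$.
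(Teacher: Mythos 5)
Your overall route is the one the paper takes: pass to the reduced (upper--triangular) representative of the coset of $V$ and let the semi-integrality of $(1_{n_0}\bot B_1)[V^{-1}]$ together with the unimodularity of the split-off block force $V$ into block form. Well-definedness and injectivity are handled correctly (indeed, if $U(1_{n_0}\bot W)=1_{n_0}\bot W'$ then $U=1_{n_0}\bot W'W^{-1}$ is automatically block diagonal, so $W'\in GL_{m-n_0}({\mathcal O}_p)W$). The problem is the mechanism you propose for the decisive step. Lemma 4.1.1 is not the right tool here: it takes two Hermitian matrices $A\equiv B$ and produces $U$ with $B=A[U]$, i.e.\ an equivalence of \emph{forms}. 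Even if you arrange $(1_{n_0}\bot B_1)[V^{-1}]$ to agree with $1_{n_0}\bot B_2$ modulo a high power of $p$ and then apply the lemma, the conclusion is only that the form $(1_{n_0}\bot B_1)[(U^{-1}V)^{-1}]$ equals $1_{n_0}\bot B_2$ exactly; it does not follow that $U^{-1}V$ itself is of the shape $1_{n_0}\bot W$, which is what membership of $V$ in the image of $\psi_{m-n_0,m}$ requires. A matrix can mix the two blocks while the resulting Hermitian matrix happens to be block diagonal, so surjectivity does not follow from this step.

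What actually closes the argument is the direct computation, with no approximation needed. Write the reduced representative as $V=\mattwo(V_{11};V_{12};O;V_{22})$, so $V^{-1}=\mattwo(V_{11}^{-1};-V_{11}^{-1}V_{12}V_{22}^{-1};O;V_{22}^{-1})$. The $(1,1)$ block of $(1_{n_0}\bot B_1)[V^{-1}]$ is $(V_{11}^{-1})^{*}V_{11}^{-1}$ (resp.\ $\Theta_{n_0}[V_{11}^{-1}]$), whose determinant is a unit times $N_{K_p/{\bf Q}_p}(\det V_{11})^{-1}$ (resp.\ $\det\Theta_{n_0}\cdot N_{K_p/{\bf Q}_p}(\det V_{11})^{-1}$); semi-integrality of this block forces $\det V_{11}$ to be a unit, hence $V_{11}=1_{n_0}$ by the reducedness conditions ($d_{ii}=p^{e_i}$ with $0\le d_{ij}\le p^{e_j}-1$). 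Then integrality of the $(1,2)$ block forces $V_{12}V_{22}^{-1}$ to be integral, and the normalization of reduced representatives forces $V_{12}=O$; finally the $(2,2)$ block condition says exactly $V_{22}\in\widetilde\Omega(B_1)$. In the ramified case the determinant comparison must be made against the minimal order of $\det$ on $\widetilde{\rm Her}_{n_0}({\mathcal O}_p)$, which is attained by $\Theta_{n_0}$ by Jacobowitz's classification --- this is where the evenness of $n_0$ and your caveat about the scaling $p^{e_p}$ enter. So: keep your reduction to reduced form, drop the appeal to Lemma 4.1.1, and replace it with this determinant-and-integrality computation.
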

\begin{proof} (1) Clearly $\psi_{m-n_0,m}$ is injective. To prove the surjectivity, take a representative $W$ of an element of $GL_{m}({\mathcal O}_p) \backslash \widetilde \Omega(1_{n_0} \bot B_1).$ Without loss of generality we may assume that $W$ is a reduced matrix.
Since we have $(1_{n_0} \bot B_1)[W^{-1}]  \in  \widetilde{{\rm Her}_m}({\mathcal O}_p),$ we have 
$W=\mattwo(1_{n_0};0;0;W_1)$ with $W_1 \in \widetilde \Omega(B_1).$ This proves the assertion.

(2) The assertion can be proved in the same manner as (1).
\end{proof}

  \begin{lems}
  Let $B \in \widetilde {\rm Her}_{m}({\mathcal O}_p)^{\times}.$ Then we have
$$\alpha_p(\pi^r d B)=p^{rm^2}\alpha_p(B)$$
for any non-negative integer $r$ and $d \in {\bf Z}_p^*.$ 
   \end{lems}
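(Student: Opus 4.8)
The plan is first to strip off the unit part of the scalar and reduce to multiplication by $p^r$, and then to count solutions modulo $p^a$ directly.

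First I would record that $\alpha_p$ is invariant under scaling by a $p$-adic unit. Indeed, for $\epsilon \in {\bf Z}_p^*$ and $T \in \widetilde{\rm Her}_m({\mathcal O}_p)^\times$ one has $(\epsilon T)[X]-\epsilon T=\epsilon(T[X]-T)$ and $\epsilon\, p^a\widetilde{\rm Her}_m({\mathcal O}_p)=p^a\widetilde{\rm Her}_m({\mathcal O}_p)$, so ${\mathcal A}_a(\epsilon T,\epsilon T)={\mathcal A}_a(T,T)$ and hence $\alpha_p(\epsilon T)=\alpha_p(T)$. Since $\pi$ is a prime element of ${\bf Q}_p$ we may write $\pi=pu$ with $u\in{\bf Z}_p^*$, so $\pi^r d=p^r\epsilon$ with $\epsilon=u^rd\in{\bf Z}_p^*$; applying the invariance to $T=p^rB$ gives $\alpha_p(\pi^r dB)=\alpha_p(p^rB)$. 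Thus it suffices to prove $\alpha_p(p^rB)=p^{rm^2}\alpha_p(B)$.

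Next, for $a\ge r$ the defining condition for ${\mathcal A}_a(p^rB,p^rB)$ reads $p^r(B[X]-B)\in p^a\widetilde{\rm Her}_m({\mathcal O}_p)$, i.e. $B[X]-B\in p^{a-r}\widetilde{\rm Her}_m({\mathcal O}_p)$. The key step is that this last condition depends only on $X$ modulo $p^{a-r}$: for $X'=X+p^{a-r}Y$ one has $B[X']-B[X]=p^{a-r}(Y^*BX+X^*BY)+p^{2(a-r)}B[Y]$, and I would verify that $B[Y]$ and the polarization $Y^*BX+X^*BY$ both lie in $\widetilde{\rm Her}_m({\mathcal O}_p)$. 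Granting this, the reduction map $M_m({\mathcal O}_p)/p^aM_m({\mathcal O}_p)\to M_m({\mathcal O}_p)/p^{a-r}M_m({\mathcal O}_p)$ has fibres of cardinality $p^{2m^2r}$ (because ${\mathcal O}_p$ is free of rank $2$ over ${\bf Z}_p$), and it carries ${\mathcal A}_a(p^rB,p^rB)$ exactly onto the full preimage of ${\mathcal A}_{a-r}(B,B)$; hence $\#{\mathcal A}_a(p^rB,p^rB)=p^{2m^2r}\,\#{\mathcal A}_{a-r}(B,B)$. Using that $p^{-am^2}\#{\mathcal A}_a(B,B)$ is independent of $a$ for $a$ large and equals $\alpha_p(B)$, I would conclude, for large $a$, that $\alpha_p(p^rB)=p^{-am^2}\#{\mathcal A}_a(p^rB,p^rB)=p^{-am^2}p^{2m^2r}p^{(a-r)m^2}\alpha_p(B)=p^{rm^2}\alpha_p(B)$.

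The one point that needs care, and which I expect to be the main obstacle, is the stability claim that $Y^*BX+X^*BY\in\widetilde{\rm Her}_m({\mathcal O}_p)$ for $X,Y\in M_m({\mathcal O}_p)$. I plan to settle it through the trace characterization of semi-integrality: writing $B=p^{e_p}\hat B$ with $\hat B\in\widehat{\rm Her}_m({\mathcal O}_p)$, the matrix $Y^*\hat BX+X^*\hat BY$ is Hermitian, and for any $C\in{\rm Her}_m({\mathcal O}_p)$ cyclicity of the trace gives ${\rm tr}((Y^*\hat BX+X^*\hat BY)C)={\rm tr}(\hat B(XCY^*+YCX^*))$. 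Since $XCY^*+YCX^*=W+W^*$ with $W=XCY^*\in M_m({\mathcal O}_p)$, it lies in ${\rm Her}_m({\mathcal O}_p)$, so semi-integrality of $\hat B$ forces this trace into ${\bf Z}_p$; the same argument with $X=Y$ shows $B[Y]\in\widetilde{\rm Her}_m({\mathcal O}_p)$. This lattice-stability is routine, but it is exactly the step that makes the fibration count legitimate.
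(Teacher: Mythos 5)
Your proof is correct and is essentially the argument the paper intends: the paper simply cites the scaling argument of Kitaoka's Theorem 5.6.4(a), which is exactly your reduction of the congruence $p^r(B[X]-B)\in p^a\widetilde{\rm Her}_m({\mathcal O}_p)$ to level $p^{a-r}$ followed by a fibre count of size $p^{2m^2r}$. The details you supply (unit invariance, the trace-based verification that $Y^*BX+X^*BY$ and $B[Y]$ stay in $\widetilde{\rm Her}_m({\mathcal O}_p)$, and the rank-$2m^2$ count over ${\bf Z}_p$) are precisely the Hermitian adaptations needed and are all sound.
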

 
\begin{proof} The assertion can be proved by using the same argument as in the proof of  (a) of Theorem 5.6.4 of Kitaoka [Ki2].
\end{proof}

Now we prove induction formulas for local densities different from Lemma 4.1.2 (cf. Lemmas 4.1.6,  4.1.7, and 4.1.8.) For technical reason, we formulate and prove them  in  terms of Hermitian modules. Let $M$ be  ${\mathcal O}_p$ free module, and let $b$ be a mapping from $M \times M$ to $K_p$ such that
$$b(\lambda_1 u + \lambda_2 u_2,v)=\lambda_1 b(u_1,v) + \lambda_2b(u_2,v)$$
for $u,v \in M $ and $\lambda_1,\lambda_2 \in {\mathcal O}_p,$ and
 $$b(u,v)=\overline {b(v,u)} \ {\rm for} \ u,v \in M.$$
 We call such an $M$ a Hermitian module with a Hermitian inner product $b.$ We set $q(u)=b(u,u)$ for $u \in M.$ Take an ${\mathcal O}_p$-basis $\{u_i\}_{i=1}^m$ of $M,$ and put $T_M=(b(u_i,u_j))_{1 \le i,j \le m}.$ Then $T_M$ is a Hermitian matrix, and its determinant is uniquely determined, up to $N_{K_p/{\bf Q}_p}({\mathcal O}_p^*),$  by $M.$ We say $M$ is non-degenerate if $\det T_M \not= 0.$ Conversely for a Hermitian matrix $T$ of degree $m,$ we can define a Hermitian module $M_T$ so that
$$M_T={\mathcal O}_pu_1 + {\mathcal O}_pu_2 \cdots + {\mathcal O}_pu_m$$
with  $(b(u_i,u_j))_{1 \le i,j \le m}=T.$
Let $M_1$ and $M_2$ be submodules of $M$. We then write $M=M_1 \bot M_2$ if 
$M=M_1 + M_2,$ and $b(u,v)=0$ for any $u \in M_1, v \in M_2.$
Let $M$ and $N$ be Hermitian modules. Then a homomorphism $\sigma:N \longrightarrow M$ is said to be an isometry if $\sigma$ is injective and $b(\sigma(u),\sigma(v))=b(u,v)$ for any $u,v \in N.$ In particular $M$ is said to be isometric to $N$ if $\sigma$ is an isomorphism.
We denote by $U_M'$ the group of isometries of $M$ to $M$ itself. From now on we assume that $T_M \in \widetilde {\rm Her}_m({\mathcal O}_p)$ for a Hermitian module $M$ of rank $m.$ For Hermitian modules $M$ and $N$ over ${\mathcal O}_p$ of rank $m$ and $n$ respectively,
 put
 $${\mathcal A}_a'(N,M)=\{\sigma:N \longrightarrow M/p^aM \ | \ q(\sigma(u)) \equiv q(u) \ {\rm mod} \ p^{e_p+a} \},$$
 and 
 $${\mathcal B}_a'(N,M):=\{\sigma \in {\mathcal A}_a'(N,M) \ | \ \sigma \ {\rm is \ primitive}\}.$$
 Here a homomorphism $\sigma:N \longrightarrow M$ is said to be primitive if $\phi$ induces an injective mapping from $N/\varpi N$ to $M/\varpi M.$ Then we can define the local density
 $\alpha_p'(N,M)$ as 
$$\alpha_p'(N,M)=\lim_{a \rightarrow \infty} p^{-a(2mn-n^2)}\#({\mathcal A}_a'(N,M))$$
if $M$ and $N$ are non-degenerate, and the primitive local density $\beta_p'(N,M)$ as 
$$\beta_p'(N,M)=\lim_{a \rightarrow \infty} p^{-a(2mn-n^2)}\#({\mathcal B}_a'(N,M))$$
if $M$ is non-degenerate as in the matrix case.
 It is easily seen that 
 $$\alpha_p(S,T)=\alpha_p'(M_T,M_S),$$
 and 
 $$\beta_p(S,T)=\beta_p'(M_T,M_S).$$
Let $N_1$ be a submodule of $N.$ For each  $\phi_1 \in {\mathcal B}'_a(N_1,M),$ put
$${\mathcal B}'_a(N,M;\phi_1)=\{\phi \in {\mathcal B}'_a(N,M) \ | \ \phi |_{N_1}=\phi_1 \}.$$
We note that we have 
$${\mathcal B}'_a(N_,M)=\bigsqcup_{\phi_1 \in  {\mathcal B}'_a(N_1,M)} {\mathcal B}'_a(N_,M;\phi_1).$$

\bigskip

Suppose that $K_p$ is unramified over ${\bf Q}_p.$ Then put $\Xi_{m}=1_{m}.$
Suppose that $K_p$ is ramified over ${\bf Q}_p,$ and that $m$ is even. Then put
$\Xi_{m}=\Theta_{m}.$ 

\bigskip

\begin{lems}

Let $m_1,m_2,n_1,$ and $n_2$ be non-negative integers
 such that $m_1 \ge n_1$ and $m_1 + m_2 \ge n_1 +n_2.$ Moreover suppose that $m_1$ and $n_1$ are even if $K_p$ is ramified over ${\bf Q}_p.$
Let $A_2 \in \widetilde {\rm Her}_{m_2}({\mathcal O}_p)$ and  $B_2 \in \widetilde {\rm Her}_{n_2}({\mathcal O}_p).$ 
Then we have
$$\beta_p( \Xi_{m_1} \bot A_2, \Xi_{n_1} \bot B_2)=\beta_p(\Xi_{m_1} \bot A_2,  \Xi_{n_1})\beta_p(\Xi_{m_1-n_1} \bot A_2,B_2),$$
and in particular we have 
$$\beta_p( \Xi_{n_1} \bot A_2, \Xi_{n_1} \bot B_2)=\beta_p(\Xi_{n_1} \bot A_2,  \Xi_{n_1})\beta_p(A_2,B_2),$$
\end{lems}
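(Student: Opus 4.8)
The plan is to prove the splitting formula for primitive local densities by decomposing an isometry $\sigma \in \mathcal{B}'_a(\Xi_{m_1}\bot A_2, \Xi_{n_1}\bot B_2)$ according to how it restricts to the distinguished submodule $N_1 = M_{\Xi_{n_1}}$. Working in the Hermitian-module language set up just before the statement, I would write $N = M_{\Xi_{n_1}\bot B_2} = N_1 \bot M_{B_2}$ and $M = M_{\Xi_{m_1}\bot A_2}$, and use the disjoint-union decomposition
$$\mathcal{B}'_a(N,M)=\bigsqcup_{\phi_1 \in \mathcal{B}'_a(N_1,M)} \mathcal{B}'_a(N,M;\phi_1)$$
recorded in the excerpt. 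The first key step is to count the number of choices for the restriction $\phi_1 = \sigma|_{N_1}$; this is governed by $\beta_p(\Xi_{m_1}\bot A_2, \Xi_{n_1})$, which accounts exactly for the first factor on the right-hand side.

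The heart of the argument is the second step: for a fixed primitive $\phi_1 \in \mathcal{B}'_a(N_1,M)$, I would show that extending $\phi_1$ to a primitive isometry of all of $N$ amounts to primitively representing $B_2$ by the orthogonal complement of $\phi_1(N_1)$ in $M$. Since $N_1$ is unimodular up to the ramification twist (the block $\Xi_{n_1}$ is $1_{n_1}$ in the unramified case and $\Theta_{n_1}$ in the ramified case, and in both cases has determinant a unit up to $N_{K_p/\mathbf{Q}_p}(\mathcal{O}_p^*)$), its image $\phi_1(N_1)$ splits off as an orthogonal direct summand of $M$ modulo $p^a$. The standard splitting lemma for unimodular Hermitian lattices — valid precisely because the even-rank hypotheses on $m_1, n_1$ make $\Xi_{m_1}$ and $\Xi_{n_1}$ the correct unimodular models in the ramified case — gives an orthogonal decomposition $M \cong \phi_1(N_1) \bot M'$ with $M' \cong M_{\Xi_{m_1-n_1}\bot A_2}$. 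This identification is where the evenness assumption is essential: it guarantees the complement again has the normalized block $\Xi_{m_1-n_1}$ rather than some anisotropic remainder, so that the recursion closes. Consequently $\#\mathcal{B}'_a(N,M;\phi_1)$ is, up to the normalizing power of $p$, the count defining $\beta_p(\Xi_{m_1-n_1}\bot A_2, B_2)$, independent of the particular $\phi_1$.

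The third step is bookkeeping of the normalizing exponents. Multiplying the count of restrictions $\phi_1$ (asymptotically $p^{a(2(m_1+m_2)n_1 - n_1^2)}\beta_p(\Xi_{m_1}\bot A_2,\Xi_{n_1})$) by the number of extensions per $\phi_1$ (asymptotically $p^{a(2(m_1+m_2-n_1)n_2 - n_2^2)}\beta_p(\Xi_{m_1-n_1}\bot A_2, B_2)$), I must check that the total exponent matches $a(2(m_1+m_2)(n_1+n_2) - (n_1+n_2)^2)$, the normalization for $\beta_p(\Xi_{m_1}\bot A_2, \Xi_{n_1}\bot B_2)$. This is a direct expansion: the cross term $2n_1 n_2$ in $(n_1+n_2)^2$ is absorbed correctly because the complement $M'$ has rank $m_1+m_2-n_1$, and the identity $2(m_1+m_2)(n_1+n_2)-(n_1+n_2)^2 = [2(m_1+m_2)n_1-n_1^2]+[2(m_1+m_2-n_1)n_2-n_2^2]$ holds identically. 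Passing to the limit $a\to\infty$ then yields the claimed product formula.

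I expect the main obstacle to be the second step: rigorously producing the orthogonal splitting $M \cong \phi_1(N_1)\bot M'$ modulo $p^a$ with $M' \cong M_{\Xi_{m_1-n_1}\bot A_2}$, and verifying that primitivity of $\sigma$ on $N$ is equivalent to primitivity of $\phi_1$ together with primitivity of the induced representation of $B_2$ by $M'$. In the ramified case one must be careful that the Gram matrix of the complement is genuinely $GL(\mathcal{O}_p)$-equivalent to $\Xi_{m_1-n_1}\bot A_2$ and not merely to something of the same determinant; this is exactly where the classification of Hermitian lattices of Jacobowitz \cite{J} and the approximation Lemma 4.1.1 are invoked to recognize the complement up to isometry. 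The special case $m_1=n_1$ follows immediately by taking $A_2$ to sit directly, since then $\Xi_{m_1-n_1}$ is empty and $\beta_p(\Xi_{m_1-n_1}\bot A_2, B_2) = \beta_p(A_2,B_2)$.
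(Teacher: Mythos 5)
Your proposal is correct and follows essentially the same route as the paper's proof: the paper also fibres $\mathcal{B}'_a(N,M)$ over the restrictions $\phi_1\in\mathcal{B}'_a(N_1,M)$, uses the unimodularity of $\Xi_{n_1}$ to split $\phi_1(N_1)$ off orthogonally with complement isometric to $M_{\Xi_{m_1-n_1}\bot A_2}$, and identifies the fibre over $\phi_1$ with $\mathcal{B}'_a(N_2,M_{\Xi_{m_1-n_1}\bot A_2})$ via $\phi\mapsto\phi|_{N_2}$. The exponent bookkeeping you spell out is left implicit in the paper but checks out, so there is nothing to add.
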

\begin{proof}
Let $M=M_{\Xi_{m_1} \bot A_2}, N_1=M_{\Xi_{n_1}},N_2=M_{B_2},$  and $N=N_1 \bot N_2.$ Let $a$ be a  sufficiently large positive integer.  
Let $N_1={\mathcal O}_pv_1 \oplus \cdots \oplus {\mathcal O}_pv_{n_1}$ and $ N_2={\mathcal O}_pv_{n_1+1} \oplus \cdots \oplus {\mathcal O}_pv_{n_1+n_2}.$
For each  $\phi_1 \in {\mathcal B}'_a(N_1,M),$ put $u_i=\phi_1(v_i)$ for $i=1,\cdots,n_1.$ 
 Then we can take elements $u_{n_1+1},\cdots,u_{m_1+m_2} \in M$ such that
$$(u_i,u_j)=0 \ (i=1,\cdots,n_1, \ j=n_1+1,\cdots,m_1+m_2),$$ 
and
$$((u_i,u_j))_{n_1+1 \le i,j \le m_1+m_2}= \Xi_{m_1-n_1} \bot A_2.$$
Put $N_1'={\mathcal O}_pu_1 \oplus \cdots \oplus {\mathcal O}_pu_{n_1}.$
Then we have $N_1'=M_{\Xi_{n_1}}.$ 
For $\phi \in {\mathcal B}'_a(N_1,M;\phi_1)$ and $i=1,\cdots,n_2$ we have
$$\phi(v_{n_1+i})=\sum_{j=1}^{m_1+m_2} a_{n_1+i,j} u_j$$ 
with $a_{n_1+i,j} \in {\mathcal O}_p.$ 
Put $\Xi_{n_1}=(b_{ij})_{1 \le i,j \le n_1}.$ Then we have
$$(\phi(v_j),\phi(v_{n_1+i}))=\sum_{\gamma=1}^{n_1}  \overline {a_{n_1+i,\gamma}}  b_{j \gamma}=0$$
for $i=1,\cdots,n_2$ and $j=1,\cdots,n_1.$ 
Hence we have $a_{n_1+i,\gamma}=0$ for $i=1,\cdots,n_2$ and $\gamma=1,\cdots,n_1.$
This implies that $\phi|_{N_2} \in {\mathcal B}'_a(N_2,M_{A_2 \bot \Xi_{m_1-n_1}}).$
Then the mapping 
$${\mathcal B}'_a(N_1,M;\phi_1) \ni \phi \mapsto \phi|_{N_2} \in {\mathcal B}'_a(N_2,M_{A_2 \bot \Xi_{m-n_1}})$$
is bijective. 
Thus we have
$$\#{\mathcal B}'_a(N,M)=\#{\mathcal B}'_a(N_1,M) \#{\mathcal B}'_a(N_2 , M_{ \Xi_{m-n_1} \bot A_2}).$$
This implies that 
$$\beta_p(\Xi_{m_1} \bot A_2,\Xi_{n_1}\bot B_2)=\beta_p(\Xi_{m_1} \bot A_2,\Xi_{n_1})\beta_p( \Xi_{m_1-n_1} \bot A,B_2).$$
\end{proof}

\bigskip

\begin{lems}
In addition to the notation and the assumption in Lemma 4.1.6, suppose that $A_1$ and $A_2$ are non-degenerate. 
Then 
$$\alpha_p(\Xi_{m_1} \bot A_2,  \Xi_{n_1})=\beta_p(\Xi_{m_1} \bot A_2,  \Xi_{n_1}),$$
and we have
$$\alpha_p( \Xi_{m_1} \bot A_2, \Xi_{n_1} \bot B_2)=\alpha_p(\Xi_{m_1} \bot A_2,  \Xi_{n_1})\alpha_p(\Xi_{m_1-n_1} \bot A_2,B_2),$$
and in particular we have 
$$\alpha_p( \Xi_{n_1} \bot A_2, \Xi_{n_1} \bot B_2)=\alpha_p(\Xi_{n_1} \bot A_2,  \Xi_{n_1})\alpha_p(A_2,B_2),$$
\end{lems}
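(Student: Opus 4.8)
The plan is to deduce both assertions from the primitive case already handled in Lemma 4.1.6, together with the relation between $\alpha_p$ and $\beta_p$ recorded in Lemma 4.1.2, working throughout in the language of Hermitian modules set up just before Lemma 4.1.6. The first step is to show that every representation of $\Xi_{n_1}$ by $S:=\Xi_{m_1}\bot A_2$ is automatically primitive, which is precisely the content of the identity $\alpha_p(S,\Xi_{n_1})=\beta_p(S,\Xi_{n_1})$. Applying Lemma 4.1.2 with $T=\Xi_{n_1}$ gives
$$\alpha_p(S,\Xi_{n_1})=\sum_{W\in GL_{n_1}({\mathcal O}_p)\backslash M_{n_1}({\mathcal O}_p)^{\times}}p^{(n_1-m)\nu(\det W)}\beta_p(S,\Xi_{n_1}[W^{-1}]),$$
where $m=m_1+m_2$. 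The $W=1_{n_1}$ term is $\beta_p(S,\Xi_{n_1})$, and I claim every other term vanishes. Indeed $\beta_p(S,\Xi_{n_1}[W^{-1}])\neq 0$ forces $\Xi_{n_1}[W^{-1}]\in\widetilde{\rm Her}_{n_1}({\mathcal O}_p)$, since $S[X]$ lies in $\widetilde{\rm Her}_{n_1}({\mathcal O}_p)$ for any $X\in M_{m,n_1}({\mathcal O}_p)$. But $\Xi_{n_1}$ is a modular lattice realizing the least determinant valuation occurring on $\widetilde{\rm Her}_{n_1}({\mathcal O}_p)^{\times}$ (the unimodular $1_{n_1}$ when $K_p$ is unramified or split, and the $\varpi^{i_p}$-modular $\Theta_{n_1}$ when $K_p$ is ramified, by Jacobowitz's structure theorem). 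Hence ${\rm ord}_p\det(\Xi_{n_1}[W^{-1}])={\rm ord}_p\det\Xi_{n_1}-\nu(\det W)$ can be at least ${\rm ord}_p\det\Xi_{n_1}$ only when $\nu(\det W)=0$, that is $W\in GL_{n_1}({\mathcal O}_p)$, so only the $W=1_{n_1}$ term survives.

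Next, to obtain the product formula, I would run the orthogonal-splitting count exactly as in the proof of Lemma 4.1.6, the single change being that the second isometry is no longer required to be primitive. Put $M=M_{\Xi_{m_1}\bot A_2}$, $N_1=M_{\Xi_{n_1}}$, $N_2=M_{B_2}$, and $N=N_1\bot N_2$, and take $a$ large. Given $\phi\in{\mathcal A}_a'(N,M)$, the restriction $\phi_1:=\phi|_{N_1}$ lies in ${\mathcal A}_a'(N_1,M)$, which by the previous paragraph coincides with ${\mathcal B}_a'(N_1,M)$; thus $\phi_1$ is primitive. As in Lemma 4.1.6 one then extends $\phi_1(N_1)$ to an orthogonal splitting $M=\phi_1(N_1)\bot M'$ with $M'\cong M_{\Xi_{m_1-n_1}\bot A_2}$, and the computation there (yielding $a_{n_1+i,\gamma}=0$, valid because $\Xi_{n_1}$ is invertible over $K_p$) shows that $\phi|_{N_2}$ takes values in $M'/p^aM'$. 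Conversely any $\phi_1\in{\mathcal A}_a'(N_1,M)$ together with an isometry $N_2\to M'/p^aM'$ assembles, by the orthogonality, into an element of ${\mathcal A}_a'(N,M)$. This gives
$$\#{\mathcal A}_a'(N,M)=\#{\mathcal A}_a'(N_1,M)\cdot\#{\mathcal A}_a'(N_2,M'),\qquad M'=M_{\Xi_{m_1-n_1}\bot A_2},$$
the only difference from Lemma 4.1.6 being that the second factor counts all isometries rather than the primitive ones.

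Finally, since $N$ has rank $n=n_1+n_2$ and $M'$ has rank $m-n_1$, the normalizing exponents add correctly, $(2mn_1-n_1^2)+(2(m-n_1)n_2-n_2^2)=2mn-n^2$. Dividing the displayed equality by $p^{a(2mn-n^2)}$ and letting $a\to\infty$ therefore gives $\alpha_p'(N,M)=\alpha_p'(N_1,M)\,\alpha_p'(N_2,M')$, which under $\alpha_p(S,T)=\alpha_p'(M_T,M_S)$ is exactly
$$\alpha_p(\Xi_{m_1}\bot A_2,\Xi_{n_1}\bot B_2)=\alpha_p(\Xi_{m_1}\bot A_2,\Xi_{n_1})\,\alpha_p(\Xi_{m_1-n_1}\bot A_2,B_2);$$
specializing $m_1=n_1$, so that $\Xi_{m_1-n_1}$ is empty, yields the stated particular case, and the first displayed equality of the lemma is the first step above. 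The non-degeneracy hypotheses enter only to guarantee that the densities $\alpha_p'$ are defined. I expect the first step to be the main obstacle: in the ramified case $\Xi_{n_1}=\Theta_{n_1}$ has vanishing diagonal and is merely $\varpi^{i_p}$-modular, so the primitivity of its representations cannot be read off from the norms of basis vectors and must instead be forced by the maximality of the lattice, equivalently by the integrality obstruction to $\Xi_{n_1}[W^{-1}]$ used above.
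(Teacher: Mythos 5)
Your argument is correct, but for the main product formula it follows a genuinely different route from the paper's. For the first identity $\alpha_p(\Xi_{m_1}\bot A_2,\Xi_{n_1})=\beta_p(\Xi_{m_1}\bot A_2,\Xi_{n_1})$ the paper offers no details (``can easily be proved''), and your integrality argument --- that $\Xi_{n_1}[W^{-1}]$ falls out of $\widetilde{\rm Her}_{n_1}({\mathcal O}_p)$ for $\nu(\det W)>0$ because $\Xi_{n_1}$ already realizes the minimal determinant valuation --- is a perfectly good way to supply them; note only that what your second step actually uses is the set-level statement that \emph{every} element of ${\mathcal A}'_a(N_1,M)$ is primitive, which is slightly stronger than the numerical identity $\alpha_p=\beta_p$, but your vanishing argument does deliver it (it shows ${\mathcal B}_e(S,\Xi_{n_1};W)=\emptyset$ for $\nu(\det W)>0$). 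For the second identity the paper does not revisit the lattice count at all: it expands $\alpha_p(\Xi_{m_1}\bot A_2,\Xi_{n_1}\bot B_2)$ into primitive densities by Lemma 4.1.2, uses Lemma 4.1.4 to see that the only $W\in\widetilde\Omega(\Xi_{n_1}\bot B_2)$ contributing are of the form $1_{n_1}\bot X$ with $X\in\widetilde\Omega(B_2)$, factors each term $\beta_p(\Xi_{m_1}\bot A_2,\Xi_{n_1}\bot B_2[X^{-1}])$ by Lemma 4.1.6 together with the first identity, and then resums via Lemma 4.1.2 to recover $\alpha_p(\Xi_{m_1-n_1}\bot A_2,B_2)$. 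You instead rerun the orthogonal-complement count of Lemma 4.1.6 directly on the full sets ${\mathcal A}'_a$, dropping primitivity on the $N_2$-factor, and check that the normalizing exponents add up. Both arguments are sound; yours is more self-contained and makes transparent why primitivity of $\phi|_{N_1}$ is the only input needed, while the paper's reuses the $\alpha$--$\beta$ summation machinery it has already built and avoids repeating the splitting computation.
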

\begin{proof}
The first assertion can easily be proved. By Lemmas 4.1.2 and 4.1.4, we have
$$\alpha_p( \Xi_{m_1} \bot A_2, \Xi_{n_1} \bot B_2)$$
$$=\sum_{W \in GL_{n_1+n_2}({\mathcal O}_p) \backslash \widetilde \Omega(\Xi_{n_1} \bot B_2)} p^{(n_1+n_2-(m_1+m_2))\nu(\det W)} \beta_p( \Xi_{m_1} \bot A_2, (\Xi_{n_1} \bot B_2)[W^{-1}])$$
$$=\sum_{X \in GL_{n_2}({\mathcal O}_p) \backslash \widetilde \Omega(B_2)} p^{(n_2-(m_1-n_1+m_2))\nu(\det X)}\beta_p( \Xi_{m_1} \bot A_2, \Xi_{n_1} \bot B_2 [X^{-1}]).$$
By Lemma 4.1.6 and the first assertion, we have 
$$\beta_p( \Xi_{m_1} \bot A_2, \Xi_{n_1} \bot B_2 [X^{-1}])=\alpha_p(\Xi_{m_1} \bot A_2, \Xi_{n_1})\beta_p(\Xi_{m_1-n_1} \bot A_2,B_2 [X^{-1}]).$$
Hence again by Lemma 4.1.2, we prove the second assertion. 
\end{proof}

\bigskip

\begin{lems}
{\rm (1)} Suppose that $K_p$ is unramified over ${\bf Q}_p.$ Let $A \in {\rm Her}_l({\mathcal O}_p), B_1 \in {\rm Her}_{n_1}({\mathcal O}_p)$
and $B_2 \in {\rm Her}_{n_2}({\mathcal O}_p)$  with $m \ge 2n_1.$ \\
Then we have
$$\beta_p(1_{m} \bot A, B_1 \bot B_2)=\beta_p(1_{m} \bot A,B_1)\beta_p( (-B_1) \bot  1_{m-2n_1} \bot A,  B_2)$$
{\rm (2)} Suppose that $K_p$ is ramified over ${\bf Q}_p.$ Let $A \in \widetilde {\rm Her}_{l}({\mathcal O}_p),  B_1 \in \widetilde {\rm Her}_{n_1}({\mathcal O}_p),$
and $B_2 \in \widetilde {\rm Her}_{n_2}({\mathcal O}_p)$ with $m \ge n_1.$
Then we have
$$\beta_p(\Theta_{2m} \bot A ,B_1 \bot B_2)=\beta_p(\Theta_{2m} \bot A, B_1)\beta_p( (- B_1) \bot \Theta_{2m-2n_1} \bot A,B_2).$$
\end{lems}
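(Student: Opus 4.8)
The plan is to prove the splitting formula for primitive local densities (Lemma 4.1.8) by the same module-theoretic argument used in Lemma 4.1.6, exploiting the fact that an isometry into $M = M_{\Xi_{2m}\bot A}$ of a block $B_1$ forces an orthogonal complement whose Hermitian matrix we can identify explicitly. Concretely, I would set $M=M_{\Xi_{2m}\bot A}$ (or $M_{1_m\bot A}$ in the unramified case), $N_1=M_{B_1}$, $N_2=M_{B_2}$, and $N=N_1\bot N_2$, and fix a sufficiently large integer $a$. Using the decomposition
$${\mathcal B}'_a(N,M)=\bigsqcup_{\phi_1\in{\mathcal B}'_a(N_1,M)}{\mathcal B}'_a(N,M;\phi_1)$$
already recorded in the excerpt, I would count $\#{\mathcal B}'_a(N,M)$ by first choosing the restriction $\phi_1$ to $N_1$ (there are $\#{\mathcal B}'_a(N_1,M)$ such choices) and then counting the extensions to $N_2$.

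The main technical point is to identify the orthogonal complement. Given a primitive isometry $\phi_1$ of $N_1\cong M_{B_1}$ into $M$, I would set $u_i=\phi_1(v_i)$ and choose a basis $u_{n_1+1},\dots,u_{2m+l}$ of a complement so that
$$((u_i,u_j))_{n_1+1\le i,j\le 2m+l}=(-B_1)\bot\Xi_{2m-2n_1}\bot A$$
in the ramified case, and the corresponding statement with $1_{m-2n_1}$ in the unramified case. The appearance of $-B_1$ is the crucial difference from Lemma 4.1.6: here $B_1$ is an arbitrary nondegenerate Hermitian block (not $\Xi_{n_1}$), so the orthogonal complement of an isometric copy of $M_{B_1}$ inside $M_{\Xi_{2m}\bot A}$ has Gram matrix represented by $(-B_1)\bot\Xi_{2m-2n_1}\bot A$ rather than simply $\Xi_{2m-n_1}\bot A$. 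I would justify this by a Witt-type cancellation or direct linear algebra over ${\mathcal O}_p$: since $\Xi_{2m}$ represents a hyperbolic/split form, splitting off an isometric copy of $M_{B_1}$ leaves a complement isometric to $(-B_1)\bot\Xi_{2m-2n_1}\bot A$, which is where the hypotheses $m\ge 2n_1$ (unramified) and $m\ge n_1$ (ramified) enter, guaranteeing enough hyperbolic summands remain.

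Once the complement is identified, the extensions of $\phi_1$ to $N_2$ are exactly the elements of ${\mathcal B}'_a(N_2,\,M_{(-B_1)\bot\Xi_{2m-2n_1}\bot A})$, by the same argument as in Lemma 4.1.6: the orthogonality relations $(\phi(v_j),\phi(v_{n_1+i}))=0$ force the coefficients along $u_1,\dots,u_{n_1}$ to vanish, so $\phi|_{N_2}$ lands in the complement and the restriction map
$${\mathcal B}'_a(N,M;\phi_1)\ni\phi\mapsto\phi|_{N_2}\in{\mathcal B}'_a(N_2,\,M_{(-B_1)\bot\Xi_{2m-2n_1}\bot A})$$
is a bijection. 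Summing over $\phi_1$ and dividing by the appropriate power of $p$ in the defining limit then yields
$$\beta_p(\Xi_{2m}\bot A,B_1\bot B_2)=\beta_p(\Xi_{2m}\bot A,B_1)\,\beta_p((-B_1)\bot\Xi_{2m-2n_1}\bot A,B_2),$$
and likewise the unramified version. The hard part, as indicated, is establishing the precise isometry type of the orthogonal complement, namely verifying that the complement of a primitively embedded $M_{B_1}$ inside the split module $M_{\Xi_{2m}\bot A}$ is represented by $(-B_1)\bot\Xi_{2m-2n_1}\bot A$; the remainder of the counting argument is then formally identical to that of Lemma 4.1.6.
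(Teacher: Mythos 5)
Your global strategy --- work with Hermitian modules, decompose ${\mathcal B}'_a(N,M)$ over the restrictions $\phi_1$ to $N_1$, and identify the extensions to $N_2$ with primitive isometries into a module of Gram matrix $(-B_1)\bot\Xi_{2m-2n_1}\bot A$ --- is the same as the paper's, and you have correctly located where the work lies. But the mechanism you propose for the key step is precisely the one that fails. You claim that, after choosing $u_{n_1+1},\dots,u_{2m+l}$ orthogonal to $u_1,\dots,u_{n_1}$ with Gram matrix $(-B_1)\bot\Xi_{2m-2n_1}\bot A$, ``the orthogonality relations force the coefficients along $u_1,\dots,u_{n_1}$ to vanish, by the same argument as in Lemma 4.1.6.'' That argument does not transfer: in Lemma 4.1.6 the split-off block is $\Xi_{n_1}$, which is invertible over ${\mathcal O}_p$ up to the harmless factor $\varpi^{i_p}$, so the relation $\sum_\gamma\overline{a_{n_1+i,\gamma}}b_{j\gamma}\equiv 0$ does force $a_{n_1+i,\gamma}\equiv 0$. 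Here $B_1$ is an arbitrary nondegenerate block and the congruences force nothing of the sort (for $B_1=(p)$, $a\cdot p\equiv 0 \bmod p^a$ does not give $a\equiv 0$); indeed, if those coefficients vanished, the block $(-B_1)$ could never enter the formula. Moreover, no basis of the shape you describe exists: an orthogonal splitting $M\cong M_{B_1}\bot M_{(-B_1)\bot\Xi_{2m-2n_1}\bot A}$ would give $\Xi_{2n_1}\sim B_1\bot(-B_1)$ over ${\mathcal O}_p$, which fails unless $B_1$ is unimodular (e.g. $(p)\bot(-p)$ represents no units, while $1_2$ does). So either your vectors span only a finite-index sublattice of a complement of $\phi_1(N_1)$ --- and then $\phi(v_{n_1+i})\in M$ need not have integral coordinates in your system --- or they form a module complement but cannot carry the stated Gram matrix while being orthogonal to $\phi_1(N_1)$.

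The paper's proof resolves exactly this point. It extends $u_1,\dots,u_{n_1}$ to a genuine ${\mathcal O}_p$-basis of $M$ in which $u_{n_1+1},\dots,u_{2n_1}$ are isotropic and pair hyperbolically with $u_1,\dots,u_{n_1}$, namely $(u_i,u_{n_1+j})=\delta_{ij}\varpi^{i_p}$, the remaining vectors giving $\Theta_{2m-2n_1}\bot A$; this uses the primitivity of $\phi_1$ and the split structure of $\Theta_{2m}$, and is where the hypothesis $m\ge n_1$ (resp. $m\ge 2n_1$) enters. The orthogonality congruences then \emph{determine} the coefficients of $\phi(v_{n_1+i})$ along the dual vectors $u_{n_1+1},\dots,u_{2n_1}$ in terms of those along $u_1,\dots,u_{n_1}$, rather than killing the latter, and the substitution $u_j'=u_j-\bar\varpi^{-i_p}\sum_\gamma\bar b_{\gamma j}u_{n_1+\gamma}$, whose Gram matrix is exactly $-B_1$, converts the solution set into ${\mathcal B}'_a(N_2,M_{(-B_1)}\bot M_{\Theta_{2m-2n_1}\bot A})$. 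This explicit dual-basis computation is the actual content of the lemma; your ``Witt-type cancellation'' placeholder both asserts a decomposition that is false as stated and omits the step that produces the $-B_1$.
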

\begin{proof}
First suppose that $K_p$ is ramified over ${\bf Q}_p.$ Let $M=M_{\Theta_{2m} \bot A}, N_1=M_{B_1},N_2=M_{B_2},$  and $N=N_1 \bot N_2.$ Let $a$ be a  sufficiently large positive integer.  
Let $N_1={\mathcal O}_pv_1 \oplus \cdots \oplus {\mathcal O}_pv_{n_1}$ and $ N_2={\mathcal O}_pv_{n_1+1} \oplus \cdots \oplus {\mathcal O}_pv_{n_1+n_2}.$
For each  $\phi_1 \in {\mathcal B}'_a(N_1,M),$ put $u_i=\phi_1(v_i)$ for $i=1,\cdots,n_1.$ 
 Then we can take elements $u_{n_1+1},\cdots,u_{2m+l} \in M$ such that
$$(u_i,u_{n_1+j})=\delta_{ij}\varpi^{i_p}, \ (u_{n_1+i},u_{n_1+j})=0 \ (i,j=1,\cdots,n_1),$$ 
$$(u_i,u_j)=0 \ (i=1,\cdots,2n_1, j=2n_1+1,\cdots,2m+l),$$
and
$$((u_i,u_j))_{2n_1+1 \le i,j \le 2m+l}= \Theta_{2m-2n_1} \bot A,$$
where $\delta_{ij}$ is Kronecker's delta. 
Let $B_1=(b_{ij})_{1 \le i,j \le n_1},$
and put
$$u_j'=u_j -\bar \varpi^{-i_p}\sum_{\gamma=1}^{n_1} \bar b_{\gamma j}u_{n_1+\gamma}$$
for $j=1,\cdots,n_1,$ 
and $M'={\mathcal O}_pu_1' \oplus \cdots \oplus {\mathcal O}_pu_{n_1}'.$
Then we have $(u_i',u_j')=-b_{ij}$ and hence we have $M'=M_{(-B_1)}.$ 
For $\phi \in {\mathcal B}'_a(N_1,M;\phi_1)$ and $i=1,\cdots,n_2$ we have
$$\phi(v_{n_1+i})=\sum_{j=1}^{2m+l} a_{n_1+i,j} u_j$$ 
with $a_{n_1+i,j} \in {\mathcal O}_p.$ 
 Then we have
$$(\phi(v_j),\phi(v_{n_1+i}))=\sum_{\gamma=1}^{n_1}  \overline {a_{n_1+i,\gamma}}  b_{j \gamma}+\overline{a_{n_1+i,n_1+j}}\varpi^{i_p}=0$$
for $i=1,\cdots,n_2$ and $j=1,\cdots,n_1.$ Hence we have
$$\phi(v_{n_1+i})=\sum_{j=1}^{n_1} a_{n_1+i,j} u_j' +\sum_{j=2n_1+1}^{2m+l} a_{n_1+i,j} u_j.$$ 
This implies that $\phi|_{N_2} \in {\mathcal B}'_a(N_2, M_{(-B_1)} \bot M_{A \bot \Theta_{2m-2n_1}}).$
Then the mapping 
$${\mathcal B}'_a(N_1,M;\phi_1) \ni \phi \mapsto \phi|_{N_2} \in {\mathcal B}'_a(N_2, M_{(-B_1)} \bot M_{A \bot \Theta_{2m-2n_1}})$$
is bijective. 
Thus we have
$$\#{\mathcal B}'_a(N,M)=\#{\mathcal B}'_a(N_1,M) \#{\mathcal B}'_a(N_2 , M_{(- B_1)} \bot M_{ \Theta_{2m-2n_1} \bot A}).$$
This implies that 
$$\beta_p(\Theta_{2m} \bot A,B_1 \bot B_2)=\beta_p(\Theta_{2m} \bot A,B_1)\beta_p( (-B_1) \bot  \Theta_{2m-2n_1} \bot A,B_2).$$
This proves (2). Next suppose that $K_p$ is unramified over ${\bf Q}_p.$ 
For an even positive integer $r$ define $\Theta_r$ by 
$$\Theta_{r}= \overbrace{\mattwo(0;1;1;0) \bot ...\bot \mattwo(0;1;1;0)}^{r/2}.$$
Then we have $\Theta_r \sim 1_r.$ By using the same argument as above we can prove that
$$\beta_p(\Theta_{m} \bot A ,B_1 \bot B_2)=\beta_p(\Theta_{m} \bot A, B_1)\beta_p( (- B_1) \bot \Theta_{m-2n_1} \bot A,B_2)$$
or 
$$\beta_p(\Theta_{m-1} \bot  1 \bot A ,B_1 \bot B_2)=\beta_p(\Theta_{m-1} \bot 1 \bot A, B_1)\beta_p( (- B_1) \bot \Theta_{m-2n_1} \bot 1 \bot A,B_2)$$
according as $m$ is even or not. Thus we prove the assertion (1).

\end{proof}

\begin{lems}
Let $k$ be a positive integer.

\noindent
{\rm (1)} Suppose that $K_p$ is unramified over ${\bf Q}_p.$ \\
{\rm  (1.1)} Let $b \in {\bf Z}_p.$ Then  we have
$$\beta_p(1_{2k},pb)=   (1- p^{-2k})(1+p^{-2k+1}).$$
{\rm  (1.2)} Let  $b  \in {\bf Z}_p^*.$ Then we have 
 $$\alpha_p(1_{2k},b)= \beta_p(1_{2k},b)=1- p^{-2k},$$
 and 
 $$\alpha_p(1_{2k-1},b)= \beta_p(1_{2k-1},b)=1+ p^{-2k+1}.$$
\noindent
{\rm (2)} Suppose that $K_p$ is ramified over ${\bf Q}_p.$  \\
{\rm (2.1)} Let  $B \in  {\rm Her}_{m,*}({\mathcal O}_p)$ with $m \le 2.$
 Then we have
 $$\beta_p(\Theta_{2k},\pi^{i_p}B)=\prod_{i=0}^{m-1} (1-p^{-2k+2i}).$$
{\rm (2.2)}  Let $B=\smallmattwo(0;\varpi;\bar \varpi;0).$ Then we have
  $$\alpha_p(\Theta_{2k}, B)=\beta_p(\Theta_{2k}, B)= 1-p^{-2k}.$$
\end{lems}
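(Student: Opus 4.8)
The plan is to read all four statements as explicit evaluations of the local densities $\alpha_p$ and $\beta_p$ straight from their definitions, observing that in each case the defining count $\#{\mathcal A}_a$ or $\#{\mathcal B}_a$ stabilizes after a single step and reduces to counting solutions of the associated norm or trace form over the residue field. The one computational input I will use repeatedly is the Gauss sum of the norm form: if $\psi$ is a nontrivial additive character of ${\bf F}_p$ and $N$ denotes the norm of the residue extension ${\bf F}_{p^2}/{\bf F}_p$ in the unramified case, then $\sum_{x \in {\bf F}_{p^2}} \psi(tN(x)) = -p$ for every $t \in {\bf F}_p^{\times}$, because $N$ is surjective with each nonzero value attained exactly $p+1$ times.

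For the unramified assertions (1.1) and (1.2) we have $e_p = 0$, so $\widetilde{\rm Her} = {\rm Her}$ and $1_m[X] = X^{*}X = \sum_{i} N(x_i)$ is an orthogonal sum of $m$ copies of the anisotropic binary norm form. For (1.2), since $b$ is a unit, every $X$ with $\sum_i N(x_i) = b$ has a unit coordinate, hence is automatically primitive and $X \mapsto X^{*}X$ is a submersion modulo $p$; thus ${\mathcal A}_a = {\mathcal B}_a$ and the density stabilizes to $p^{-(2m-1)}$ times the number of ${\bf F}_{p^2}$-points of $\sum_i N(x_i) = \overline{b}$. The Gauss sum evaluates this to $1 - (-1)^m p^{-m}$, which is $1 - p^{-2k}$ for $m = 2k$ and $1 + p^{-2k+1}$ for $m = 2k-1$. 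For (1.1) the same smoothness argument applies to primitive $X$ with $X^{*}X \equiv pb$: one counts nonzero $\overline{X}$ on the cone $\sum_i N(\overline{x}_i) = 0$ in ${\bf F}_{p^2}^{2k}$ and multiplies by the stable lifting factor. Evaluating the cone count by the same Gauss sum gives $(1 - p^{-2k})(1 + p^{-2k+1})$; moreover, because $X \mapsto X^{*}X$ is a submersion onto $p{\bf Z}_p$ at every primitive point, the fibre size over $pb$ is independent of the precise value of $b$ (even $b = 0$), which is why the answer is uniform in $b$.

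For the ramified assertions (2.1) and (2.2) I would reduce the rank to one using the induction formulas of Lemmas 4.1.6--4.1.8, peeling off one rank-one (or one $\Theta_2$) block at a time. Concretely, for a rank-two $B$, Lemma 4.1.8(2) gives $\beta_p(\Theta_{2k}, B_1 \bot B_2) = \beta_p(\Theta_{2k}, B_1)\,\beta_p((-B_1) \bot \Theta_{2k-2}, B_2)$, whose second factor is governed by $\Theta_{2k-2}$; iterating, the telescoping product is exactly $\prod_{i=0}^{m-1}(1 - p^{-2k+2i})$ once the rank-one value is known. The rank-one base case is evaluated from $\Theta_{2k}[X] = \sum_{j=1}^{k}{\rm Tr}_{K_p/{\bf Q}_p}(\varpi^{i_p}\overline{x_{2j-1}} x_{2j})$, a hyperbolic trace form: representing $\pi^{i_p} b$ by it is again a smooth condition modulo the relevant power of $p$, and the residue-field count for a hyperbolic form yields $1 - p^{-2k}$. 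For (2.2) the equality $\alpha_p(\Theta_{2k}, B) = \beta_p(\Theta_{2k}, B)$ holds because $B = \smallmattwo(0;\varpi;\overline{\varpi};0)$ has determinant of minimal valuation, so no representation can be divisible by $\varpi$; the common value $1 - p^{-2k}$ then follows from the same base computation, and, when $i_p = 1$, directly from the first assertion of Lemma 4.1.7 with $\Xi_2 = \Theta_2$.

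The main obstacle I anticipate is the ramified dyadic case. The definitions of ${\rm Her}_{m,*}({\mathcal O}_p)$, of $i_p$, and of $e_p = f_p - \delta_{2,p}$, together with the rescaling by $p^{e_p}$ built into $\widetilde{\rm Her}$, make both the normalization of the density and the residue-field counts depend delicately on whether $p = 2$ and on the value $f_2 \in \{2,3\}$; in particular $B = \smallmattwo(0;\varpi;\overline{\varpi};0)$ coincides with $\Theta_2$ only when $i_p = 1$, so the case $p = 2$, $f_2 = 2$ must be treated separately. Checking that the submersion and Hensel stabilization still isolate the correct primitive contribution in these dyadic situations, and that the trace-form count remains hyperbolic after the $\pi^{i_p}$ rescaling, is where the careful bookkeeping will concentrate.
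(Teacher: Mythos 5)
Your treatment of the unramified case (1.1)--(1.2) is correct and is in substance the same computation the paper makes: the paper also reduces $\beta_p(1_{2k},pb)$ and $\alpha_p(1_m,b)$ to counting solutions of an orthogonal sum of $m$ copies of the anisotropic binary norm form (written out in coordinates separately for $p\ne 2$ and $p=2$), and then simply cites Lemma 9 of Kitaoka's paper [Ki1] for the count $p^{(4k-1)a}(1-p^{-2k})(1+p^{-2k+1})$; your Gauss-sum evaluation $\sum_{x}\psi(tN(x))=-p$ is a legitimate self-contained substitute for that citation and gives the same uniformity in $b$. Likewise your observation that a representation of a unit (or of a target of minimal determinant valuation) is automatically primitive, so that $\alpha_p=\beta_p$ in (1.2) and (2.2), matches the paper. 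For the ramified rank-one base case the paper also writes $\Theta_{2k}[\mathbf{x}]$ as an explicit hyperbolic trace form (with the two dyadic normalizations $f_2=2,3$ treated separately) and counts primitive solutions directly, arriving at $1-p^{-2k}$; your plan is the same.

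The one step that would fail as proposed is the reduction of the rank-two ramified case (2.1) via Lemma 4.1.8(2) applied to an orthogonal splitting $B=B_1\bot B_2$ with $B_1$ of rank one. Over a ramified dyadic $K_2$ not every $B\in {\rm Her}_{2,*}({\mathcal O}_2)$ is diagonalizable: for instance when $f_2=3$ the matrix $B=\smallmattwo(0;1;1;0)$ lies in ${\rm Her}_2({\mathcal O}_2)_1$ (its diagonal is in $2{\bf Z}_2$) but has no orthogonal rank-one summand, since its norm ideal is strictly smaller than its scale; it is also not $GL_2({\mathcal O}_2)$-equivalent to $\Theta_2$ (the determinant valuations differ), so neither of your two peelings applies. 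The paper's proof of the $m=2$ case avoids this by \emph{not} assuming an orthogonal splitting: it conditions on the image $\phi_1(v_1)$ of a single basis vector of $M_B$ (with $q(v_1)=b_{11}$, which may be $0$), extends it to a hyperbolic pair in $M_{\Theta_{2k}}$, and carries the off-diagonal entry $b_{12}$ through the resulting count, obtaining $\beta_p(\Theta_{2k},B)=\beta_p(\Theta_{2k},b_{11})\,p^{-a}\sum_{x_1}\beta_p(\Theta_{2k-2},\,b_{22}+b_{11}x_1\bar x_1+x_1b_{12}+\bar x_1\bar b_{12})$, after which the uniformity of the rank-one value in the represented element finishes the evaluation. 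This is the same module-theoretic computation as in the proof of Lemma 4.1.8, just run without presupposing that the target decomposes; your proof needs this (or an explicit case analysis of the indecomposable dyadic targets) to cover all of ${\rm Her}_{2,*}({\mathcal O}_2)$. For $p$ odd ramified every Hermitian matrix diagonalizes and your argument goes through as written.
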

\begin{proof}
(1) Put $B=(b).$ Let $p \not=2.$ Then we have $K_p={\bf Q}_p(\sqrt{\varepsilon})$ with $\varepsilon \in {\bf Z}_p^*$ such that $(\varepsilon,p)_p=-1.$ Then we have
$$\#{\mathcal B}_a(1_{2k},B)=\#\{(x_i) \in M_{4k,1}({\bf Z}_p)/p^a M_{4k,1}({\bf Z}_p) \ |  \  (x_i) \not\equiv 0 \ {\rm mod} \ p,$$
$$ \sum_{i=1}^{2k} (x_{2i-1}^2-\varepsilon x_{2i}^2) \equiv pb \ {\rm mod} \ p^a  \}.$$
Let $p=2.$ Then we have $K_2={\bf Q}_2(\sqrt{-3})$ and 
$$\#{\mathcal B}_a(1_{2k},B)=\#\{(x_i) \in M_{4k,1}({\bf Z}_2)/2^a M_{4k,1}({\bf Z}_2) \ |  \  (x_i) \not\equiv 0 \ {\rm mod} \ 2,$$
$$\sum_{i=1}^{2k} (x_{2i-1}^2+x_{2i-1}x_{2i}+ x_{2i}^2) \equiv 2b \ {\rm mod} \ 2^a \}.$$
In any case, by Lemma 9 of \cite{Ki1}, we have 
$$\#{\mathcal B}_a(1_{2k},B)=p^{(4k-1)a}(1-p^{-2k})(1+p^{-2k+1}).$$
This proves the assertion (1.1). Similarly the assertion (1.2) holds.

(2) First let $m=1,$ and put $B=(b)$ with $b \in 2{\bf Z}_p.$ Then $2^{-1} b \in {\bf Z}_p.$ 
Let $p \not=2,$ or $p=2$ and $f_2=3.$ Then we have $K_p={\bf Q}_p(\varpi)$ with $\varpi$ a prime element of
$K_p$ such that $\bar \varpi =-\varpi.$ Then an element ${\bf x}=(x_{2i-1}+\varpi x_{2i})_{ 1 \le i \le  2k}$ of $M_{2k,1}({\mathcal O}_p)/p^a M_{2k,1}({\mathcal O}_p)$ is primitive if and only if $(x_{2i-1})_{1 \le i \le 2k} \not\equiv  0 \ {\rm mod} \ p.$ Moreover we have 
$$\Theta_{2k}[{\bf x}]=2\sum_{1 \le i \le 2k} (x_{2i}x_{2i+1}-x_{2i-1}x_{2i+2})\pi.$$ Hence we have 
$$\#{\mathcal B}_a(1_{2k},B)=\#\{(x_i) \in M_{4k,1}({\bf Z}_p)/p^a M_{4k,1}({\bf Z}_p) \ |  \   (x_{2i-1})_{1 \le i \le 2k}  \not\equiv 0 \ {\rm mod} \ p$$
$$ \sum_{i=1}^{2k} (x_{2i}x_{2i+1}-x_{2i-1}x_{2i+2})  \equiv 2^{-1} b \ {\rm mod} \ p^a \}.$$
Let $p=2$ and $f_2=2.$ Then we have  $K_2={\bf Q}_2(\varpi)$ with $\varpi$ a prime element of
$K_2$ such that $\eta:=2^{-1}(\varpi+ \bar \varpi)  \in {\bf Z}_2^*.$
Then we have 
$$\#{\mathcal B}_a(1_{2k},B)=\#\{(x_i) \in M_{4k,1}({\bf Z}_2)/2^a M_{4k,1}({\bf Z}_2)
 \ |  \ (x_{2i-1})_{1 \le i \le 2k}  \not\equiv 0 \ {\rm mod} \ 2, $$
$$ \sum_{i=1}^{2k}\{\eta (x_{2i}x_{2i+1}+x_{2i-1}x_{2i+2}) + x_{2i-1}x_{2i+1}+\pi x_{2i}x_{2i+2}\}  \equiv 2^{-1}b \ {\rm mod} \ 2^a \}.$$
Thus, in any case,  by a simple computation we have 
$$\#{\mathcal B}_a(1_{2k},B)=p^{(2k-1)a}(p^{2ka}-p^{2k(a-1)}). $$
Thus the assertion (2.1) has been proved   for $m=1.$ Next let $\pi^{i_p}B =(b_{ij})_{1 \le i,j \le 2} \in {\rm Her}_{2,*}({\mathcal O}_p).$
Let $M=M_{\Theta_{2k}}, N_1=M_{\pi^{i_p}b_{11}},$  and $N=M_{B}.$ Let $a$ be a  sufficiently large positive integer.  
For each  $\phi_1 \in {\mathcal B}'_a(N_1,M),$ put
$${\mathcal B}'_a(N,M;\phi_1)=\{\phi \in {\mathcal B}'_a(N,M) \ | \ \phi |_{N_1}=\phi_1 \}.$$
Let $N={\mathcal O}_p v_1 \oplus {\mathcal O}_p v_2,$ and put $u_1=\phi_1(v_1).$ Then we can take elements
$u_2,\cdots,u_{2k} \in M$ such that
$$M={\mathcal O}_p u_1 \oplus {\mathcal O}_p u_2 \oplus \cdots \oplus {\mathcal O}_p u_{2k}$$
and
$$(u_1,u_2)=\varpi, (u_2,u_2)=0, (u_i,u_j)=0 \ {\rm for} \ i=1,2, j=3,\cdots, 2k, \ {\rm and} \ (u_i,u_j)_{3 \le i,j \le 2k}=\Theta_{2k-2}.$$
Then by the same argument as in the proof of Lemma 4.1.8, we can prove that
$${\mathcal B}'_a(N,M;\phi_1)=\{(x_i)_{1 \le i \le 2k-1}  \in M_{2k-1,1}({\mathcal O}_p)/p^a M_{2k-1,1}({\mathcal O}_p) \ | \ (x_i)_{2 \le i \le 2k-2} \ \not\equiv \ 0 \ {\rm mod} \ \varpi, $$
$$-x_1\bar x_1 b_{11} -x_1b_{12}-\bar x_1 \bar b_{12} +\Theta_{2k-2}[(x_i)_{2 \le i \le 2k-2}]\equiv b_{22} \ {\rm mod} \ p^a \}.$$
Hence by the assertion for $m=1,$ we have 
$$\beta_p(\Theta_{2k},B)=\beta_p(\Theta_{2k},b_{11})p^{-a}\sum_{x_1 \in {\mathcal O}_p/\varpi^a {\mathcal O}_p} \beta_p(\Theta_{2k-2}, b_{22}+b_{11}x_1 \bar x_1+x_1b_{12}+\bar x_1 \bar b_{12})$$
$$=(1-p^{-2k})(1-p^{-2k+2}).$$
Thus the assertion (2.1) has been proved   for $m=2.$
The assertion (2.2)  can be proved by using the same argument as above.

\end{proof}

\begin{lems}
Let $k$ and $m$ be integers with $k \ge m.$

\noindent
{\rm (1)} Suppose that $K_p$ is unramified over ${\bf Q}_p.$ 
  Let $A \in {\rm Her}_l({\mathcal O}_p)$ and   $B \in {\rm Her}_m({\mathcal O}_p).$ Then  we have
  $$\beta_p(pA \bot 1_{2k},pB)=\beta_p(1_{2k}, pB)=  \prod_{i=0}^{2m-1} (1-(-1)^i p^{-2k+i})$$
  
\noindent
{\rm (2)} Let $K_p={\bf Q }_p \oplus {\bf Q}_p.$  Let $l$ be an integer.
  Let  $B \in {\rm Her}_{m}({\mathcal O}_p).$ Then  we have
  $$\beta_p(1_{2k},pB )=  \prod_{i=0}^{2m-1} (1-p^{-2k+i})$$\\

\noindent
{\rm (3)} Suppose that $K_p$ is ramified over ${\bf Q}_p.$  Let  $A \in {\rm Her}_{l,*}({\mathcal O}_p)$ and $B \in  {\rm Her}_{m,*}({\mathcal O}_p).$ 
 Then we have
 $$\beta_p(\pi^{i_p}A \bot \Theta_{2k},\pi^{i_p}B)=\beta_p(\Theta_{2k},\pi^{i_p}B)= \prod_{i=0}^{m-1} (1-p^{-2k+2i}).$$
\end{lems}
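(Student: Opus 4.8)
The plan is to prove all three parts together by induction on $m$, peeling off one diagonal block of the represented form at a time with the splitting formula of Lemma 4.1.8 and evaluating the resulting rank-one (unramified or split case) or rank-$\le 2$ (ramified case) factor by Lemma 4.1.9. I will in fact prove the slightly stronger statement that the value of $\beta_p(pA\bot 1_{2k},pB)$, resp.\ $\beta_p(\pi^{i_p}A\bot\Theta_{2k},\pi^{i_p}B)$, equals the displayed product for \emph{every} $p$-divisible prefix $A$; taking $A$ empty then yields both equalities asserted in parts (1) and (3).

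Consider first the unramified case. Since $K_p/{\bf Q}_p$ is unramified, $B$ is $GL_m({\mathcal O}_p)$-equivalent to a diagonal matrix, so I may write $pB\sim pb_1\bot pB'$ with $b_1\in{\bf Z}_p$ and $B'\in{\rm Her}_{m-1}({\mathcal O}_p)$. Writing the ambient form as $1_{2k}\bot pA$ and applying Lemma 4.1.8(1) with $n_1=1$ — permissible because $k\ge m\ge 1$ gives $2k\ge 2$ — I get
$$\beta_p(1_{2k}\bot pA,\,pb_1\bot pB')=\beta_p(1_{2k}\bot pA,\,pb_1)\,\beta_p\bigl((-pb_1)\bot 1_{2k-2}\bot pA,\,pB'\bigr).$$
The second factor is of the same shape with $k,m$ replaced by $k-1,m-1$ and the prefix enlarged to $A\bot(-b_1)$; since $k-1\ge m-1$, the induction hypothesis gives $\prod_{i=0}^{2m-3}(1-(-1)^{i}p^{-2k+2+i})=\prod_{i=2}^{2m-1}(1-(-1)^{i}p^{-2k+i})$. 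Multiplying by the first factor $\beta_p(1_{2k}\bot pA,pb_1)$, which by the base case discussed below equals $(1-p^{-2k})(1+p^{-2k+1})=\prod_{i=0}^{1}(1-(-1)^{i}p^{-2k+i})$, closes the induction. The hypothesis $k\ge m$ is precisely what keeps every intermediate step in range, since the unimodular rank drops by $2$ and the represented rank by $1$ at each peeling.

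The split case (2) is identical once the diagonalization of $B$ is replaced by the Smith normal form over ${\bf Z}_p\oplus{\bf Z}_p$ and one uses the ${\bf Q}_p\oplus{\bf Q}_p$-analogue of Lemma 4.1.8(1), whose proof is the same (alternatively, the whole split computation reduces to the general-linear representation count over ${\bf Z}_p$); the rank-one base value is $(1-p^{-2k})(1-p^{-2k+1})$. The ramified case (3) runs the same way, now using Jacobowitz's decomposition of $B\in{\rm Her}_{m,*}({\mathcal O}_p)$ into blocks of rank $\le 2$, the splitting Lemma 4.1.8(2) with $n_1\le 2$, and the base values of Lemma 4.1.9(2.1), (2.2); the evenness hypotheses of Lemma 4.1.8(2) are met because one peels off $\Theta$-type and rank-$\le 2$ blocks, and the product telescopes after the index shift $i\mapsto i-n_1$.

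The one ingredient not formally supplied by Lemmas 4.1.8 and 4.1.9 is the base of the induction carrying a nontrivial prefix, i.e.\ the prefix-invariance $\beta_p(1_{2k}\bot pA,\,pb_1)=\beta_p(1_{2k},\,pb_1)$ and its rank-$\le 2$ ramified analogue; this is the hard part. It cannot be deduced from Lemma 4.1.8, which peels the represented form against the unimodular block and never against the $p$-divisible prefix. I would settle it by a direct count of primitive solutions as in Lemma 4.1.9: writing a representing matrix as $\binom{X_1}{X_2}$ with $X_1$ in the $pA$-block and $X_2$ in the unimodular block, the defining congruence forces $X_2^{*}X_2\equiv p\bigl(b_1-A[X_1]\bigr)\bmod p^{a}$, so that for each $X_1$ the matrix $X_2$ solves a congruence of exactly the type treated in the prefix-free case. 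The delicate point is primitivity: primitivity of $\binom{X_1}{X_2}$ is strictly weaker than primitivity of $X_2$, since $X_1\bmod\varpi$ may supply rank that $X_2$ lacks. One must therefore check that the contributions from the configurations in which $X_2$ fails to be primitive, taken together with the free range of $X_1$, reproduce exactly the factor $p^{2alm}$ required by the normalization $p^{a(-2(l+2k)m+m^{2})}$ and are independent of $A$. Verifying this is the crux; granting it, the prefix drops out, the base case follows from Lemma 4.1.9, and the induction above finishes the proof.
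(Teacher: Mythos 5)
Your architecture for (1) and (3) --- induction on $m$, splitting off a rank-one (resp.\ rank-$\le 2$) block of $B$ via Lemma 4.1.8 and evaluating the resulting factor by Lemma 4.1.9 --- is exactly the route the paper takes. For (2) the paper does not induct: it observes that $\beta_p(1_{2k},pB)=p^{-4km+m^2}\#{\mathcal B}_1(1_{2k},O_m)$ and directly counts pairs $(X,Y)$ of full-rank matrices over ${\bf Z}_p/p{\bf Z}_p$ with ${}^tYX\equiv O_m$; that is the ``general-linear representation count'' you mention only parenthetically, and it is the cleaner option in the split case.

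The genuine problem is that you explicitly decline to prove the load-bearing step. You correctly isolate the prefix-invariance $\beta_p(pA\bot 1_{2k},pb)=\beta_p(1_{2k},pb)$ (and its ramified analogue) as the base of the induction, correctly note that Lemma 4.1.8 cannot supply it, and correctly identify the danger in the naive reduction $\beta_p(pA\bot 1_{2k},pb)=p^{-2al}\sum_{\mathbf x}\beta_p(1_{2k},pb-pA[\mathbf x])$: a column $\binom{X_1}{X_2}$ can be primitive while its unimodular-block component $X_2$ is imprimitive (e.g.\ $A=(1)$, $b\in{\bf Z}_p^*$, $x_1\in{\mathcal O}_p^*$, $x_2\equiv 0\bmod p$ does solve the congruence), so the sum over $X_1$ does not obviously enumerate ${\mathcal B}_a(pA\bot 1_{2k},pb)$. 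But then you write ``verifying this is the crux; granting it\dots'' and stop. A proof that grants its own crux is not a proof; the entire content of the first equality in each of (1) and (3) lives in that verification, and without it the induction never gets off the ground (every inductive step manufactures a new prefix $(-pB_1)\bot pA$). The paper disposes of this point by asserting the summation identity above and invoking Lemma 4.1.9 termwise --- which works because the value in Lemma 4.1.9(1.1) is the same for every $b\in{\bf Z}_p$, so the sum collapses --- and your observation is precisely that this identity silently discards the imprimitive-$X_2$ stratum. So your instinct about where the difficulty sits is sound, but you must actually carry out the count of that stratum (showing its normalized contribution is accounted for, uniformly in $A$) before the lemma is proved; as submitted, the argument is incomplete at exactly the point all three parts depend on.
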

 \begin{proof}
(1) Suppose that $K_p$ is unramified over ${\bf Q}_p.$ We prove the assertion by induction on $m.$
Let $\deg B=1,$ and $a$ be a sufficiently large integer. Then, by Lemma 4.1.9,  we have
$$\beta_p(pA \bot 1_{2k},pB)=p^{-al} \sum_{{\bf x} \in M_{l1}({\mathcal O}_p)/p^a M_{l1}({\mathcal O}_p)} \beta_p( 1_{2k},pB-pA[{\bf x}])$$
$$=(1-p^{-2k})(1+p^{-2k+1}).$$
This proves the assertion for $m=1.$ Let $m >1$ and suppose that the assertion holds for $m-1.$
Then $B$ can be expressed as
$B \sim_{GL_m({\mathcal O}_p)}  B_1 \bot B_2$ with $ B_1 \in {\rm Her}_1({\mathcal O}_p)$ and $B_2 \in  {\rm Her}_{m-1}({\mathcal O}_p).$
Then by Lemma 4.1.8, we have 
$$\beta_p(pA \bot 1_{2k}, pB_1 \bot pB_2)=\beta_p(pA \bot 1_{2k},pB_1)\beta_p(pA  \bot (-pB_1) \bot  1_{2k-2},  pB_2).$$
Thus the assertion holds by the induction assumption. 

(2) Suppose that  $K_p={\bf Q}_p \oplus {\bf Q}_p.$  Then  we easily see that we have 
$$\beta_p(1_{2k},pB)=p^{(-4km+m^2)} \# {\mathcal B}_1(1_{2k},O_m).$$ 
We have 
$$ {\mathcal B}_1(1_{2k},O_m)$$
$$=\{(X,Y) \in M_{2k,l}({\bf Z}_p)/pM_{2k,l}({\bf Z}_p)  \oplus M_{2k,l}({\bf Z}_p)/pM_{2k,l}({\bf Z}_p) \ | $$
$$  {}^t YX \equiv O_m \ {\rm mod} \  p M_m({\bf Z}_p)  \ {\rm and} \ {\rm rank}_{{\bf Z}_p/p{\bf Z}_p} X={\rm rank}_{{\bf Z}_p/p{\bf Z}_p} Y=m\}.$$
For each $X \in M_{2k,l}({\bf Z}_p)/pM_{2k,l}({\bf Z}_p)$ such that  ${\rm rank}_{{\bf Z}_p/p{\bf Z}_p} X=m,$
put 
$$\# {\mathcal B}_1(1_{2k},O_m;X)$$
$$=\{Y \in M_{2k,l}({\bf Z}_p)/pM_{2k,l}({\bf Z}_p) \ | \   {}^t YX \equiv O_m \ {\rm mod} \  p M_m({\bf Z}_p)  \ {\rm and} \ {\rm rank}_{{\bf Z}_p/p{\bf Z}_p} Y=m\}.$$
By a simple computation we have
$$\#\{X \in M_{2k,l}({\bf Z}_p)/pM_{2k,l}({\bf Z}_p) \ | \  {\rm rank}_{{\bf Z}_p/p{\bf Z}_p} X=m\}=
\prod_{i=0}^{m-1} (p^{2k}-p^{i}),$$
and 
$$\# {\mathcal B}_1(1_{2k},O_m;X)=\prod_{i=0}^{m-1} (p^{2k-m}-p^{i}).$$
This proves the assertion.

{\rm (3)} Suppose that $K_p$ is ramified over  ${\bf Q}_p.$ We prove the assertion by induction on $m.$
Let $\deg B=1,$ and $a$ be a sufficiently large integer. Then, by Lemma 4.1.9,  we have
$$\beta_p(\pi^{i_p} A \bot \Theta_{2k},\pi^{i_p}B)=p^{-al} \sum_{{\bf x} \in M_{l1}({\mathcal O}_p)/p^a M_{l1}({\mathcal O}_p)} \beta_p( \Theta_{2k},\pi^{i_p}B-\pi^{i_p}A[{\bf x}])
=1-p^{-2k}.$$
Let $\deg B=2.$ Then by Lemma 4.1.9, we have 
$$\beta_p(\pi^{i_p} A \bot \Theta_{2k},\pi^{i_p}B)=p^{-2la} \sum_{{\bf x} \in M_{l2}({\mathcal O}_p)/p^a M_{l2}({\mathcal O}_p)} \beta_p( \Theta_{2k},\pi^{i_p}B-\pi^{i_p}A[{\bf x}])$$
$$=(1-p^{-2k})(1-p^{-2k+2}).$$
Let $m \ge 3.$ Then $B$ can be expressed as 
$B \sim_{GL_m({\mathcal O}_p)} B_1 \bot B_2$ with $\deg B_1 \le 2.$ Then the assertion for $m$ holds by Lemma 4.1.8, the induction hypothesis, and Lemma 4.1.9. 

\end{proof}
  \begin{lems}
{\rm (1)} Suppose that $K_p$ is unramified over ${\bf Q}_p.$ Let $l$ and $m$ be an integers with $l \ge m.$
   Then  we have
  $$\alpha_p(1_l,1_m)=\beta_p(1_l,1_m)= \prod_{i=0}^{m-1} (1-(-p)^{-l+i})$$
  
\noindent
{\rm (2)} Let $K_p={\bf Q }_p \oplus {\bf Q}_p.$  Let $l$ and $m$ be  integers with $l \ge m.$
  Then  we have
  $$\alpha_p(1_l,1_m )= \beta_p(1_l,1_m )= \prod_{i=0}^{m-1} (1-p^{-l+i})$$\\

\noindent
{\rm (3)} Suppose that $K_p$ is ramified over ${\bf Q}_p.$ Let $k$ and $m$ be even integers with $k \ge m.$
 Then we have
 $$\alpha_p(\Theta_{2k},\Theta_{2m})=\beta_p(\Theta_{2k},\Theta_{2m})= \prod_{i=0}^{m-1} (1-p^{-2k+2i}).$$
\end{lems}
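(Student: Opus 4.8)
The plan is to prove, in each case, the coincidence $\alpha_p=\beta_p$ first and then to evaluate the common density by peeling off one block of the represented form at a time, inducting on its size. When $K_p$ is a field both ingredients are already available: put $\Xi_r=1_r$ in the unramified case and $\Xi_r=\Theta_r$ in the ramified case, so that the first assertion of Lemma 4.1.7 (with the orthogonal summand $A_2$ empty) gives $\alpha_p(\Xi_{m_1},\Xi_{n_1})=\beta_p(\Xi_{m_1},\Xi_{n_1})$, while its multiplicativity supplies the inductive step.

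For the unramified case (1) I would apply Lemma 4.1.7 with $m_1=l$, $n_1=1$, $A_2$ empty and $B_2=1_{m-1}$; the evenness restriction on the peeled block is active only in the ramified case, so $n_1=1$ is admissible here and I obtain $\alpha_p(1_l,1_m)=\alpha_p(1_l,1_1)\,\alpha_p(1_{l-1},1_{m-1})$. By Lemma 4.1.9 (1.2) the leading factor $\alpha_p(1_l,1_1)$ equals $1-p^{-l}$ or $1+p^{-l+1}$ according to the parity of $l$, that is $1-(-p)^{-l}$; combining this with the induction hypothesis and reindexing the product yields $\prod_{i=0}^{m-1}(1-(-p)^{-l+i})$. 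The ramified case (3) is the same computation with $1_r$ replaced by $\Theta_r$: taking $m_1=2k$, $n_1=2$ (now even), $A_2$ empty and $B_2=\Theta_{2m-2}$ reduces the problem to $\alpha_p(\Theta_{2k},\Theta_{2m})=\alpha_p(\Theta_{2k},\Theta_2)\,\alpha_p(\Theta_{2k-2},\Theta_{2m-2})$, and $\alpha_p(\Theta_{2k},\Theta_2)=1-p^{-2k}$ is exactly Lemma 4.1.9 (2.2); the induction then gives $\prod_{i=0}^{m-1}(1-p^{-2k+2i})$.

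The split case (2) lies outside the $\Xi$-formalism, so I would settle it by a direct count. Writing $K_p={\bf Q}_p\oplus{\bf Q}_p$ and $X=(X_1,X_2)$ with $X_1,X_2\in M_{lm}({\bf Z}_p)$, the defining condition $1_l[X]\equiv 1_m$ collapses to the single congruence ${}^tX_2X_1\equiv 1_m \pmod{p^a}$. For fixed $X_1$ this is solvable precisely when $X_1$ has rank $m$ modulo $p$, in which case the solutions $X_2$ form a coset of the kernel of the surjection $X_2\mapsto {}^tX_2X_1$, of size $p^{a(lm-m^2)}$; any such $X_2$ is itself of rank $m$ modulo $p$, so every $X$ counted is primitive and hence $\mathcal A_a=\mathcal B_a$ and $\alpha_p=\beta_p$. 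As the number of $X_1\bmod p^a$ of rank $m$ modulo $p$ is $p^{alm}\prod_{i=0}^{m-1}(1-p^{i-l})$, the normalising factor $p^{a(m^2-2lm)}$ produces $\alpha_p(1_l,1_m)=\prod_{i=0}^{m-1}(1-p^{-l+i})$.

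The heavy lifting in the two field cases is entirely carried by Lemmas 4.1.7 and 4.1.9, so what remains is essentially bookkeeping: keeping the peeled blocks even in the ramified case and telescoping the product correctly. I expect the only genuinely delicate point to be the base evaluation $\alpha_p(\Theta_{2k},\Theta_2)$ feeding the ramified induction, where the prime element enters through $\varpi^{i_p}$ and the dyadic subcases $i_p=0,1$ must be separated; but that analysis is precisely what Lemma 4.1.9 (2.2) already provides, so no new obstruction should arise.
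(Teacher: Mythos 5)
Your proposal is correct and follows essentially the same route as the paper: the field cases are handled by the multiplicativity of Lemma 4.1.7 (peeling off $1_1$, resp.\ $\Theta_2$) with the base evaluations supplied by Lemma 4.1.9, and the split case by the same direct rank count used for (2) of Lemma 4.1.10. The only cosmetic difference is that you justify $\alpha_p=\beta_p$ and the base factor $\alpha_p(1_l,1)=1-(-p)^{-l}$ by explicit citation of Lemmas 4.1.7 and 4.1.9(1.2), where the paper simply asserts them as easy.
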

\begin{proof}
In any case, we easily see that the local density coincides with the primitive local density. 
Suppose that $K_p$ is unramified over ${\bf Q}_p.$ Then, 
by Lemma 4.1.7, we have
$$\alpha_p(1_l,1_m)=\alpha_p(1_l,1)\alpha_p(1_{l-1},1_{m-1}).$$
We easily see that we have 
$$\alpha_p(1_l,1)=1-(-1)^l p^{-l}.$$
This proves the assertion (1). 
Suppose that $K_p$ is ramified over ${\bf Q}_p.$ Then by Lemma 4.1.7, we have
$$\alpha_p(\Theta_{2k}, \Theta_m)=\alpha_p(\Theta_{2k},\Theta_2)\alpha_p(\Theta_{2k-2},\Theta_{2m-2}).$$
Moreover by Lemma 4.1.9, we have 
$$\alpha_p(\Theta_{2k},\Theta_2)=1-p^{-2k}.$$
This proves the assertion (3).
Suppose that $K_p={\bf Q}_p \oplus {\bf Q}_p.$ Then the assertion can be proved similarly to (2) of Lemma 4.1.10.

 \end{proof} 

\subsection{Primitive densities}
  
  \noindent
  
  { }
  
  \bigskip
 
For an element $T \in {\widetilde{\rm Her}}_{m}({\mathcal O}_p),$  we define a polynomial 
 $G_p(T,X)$ in $X$ by
$$G_p(T,X)=\sum_{i=0}^{m} \sum_{W \in GL_{m}({\mathcal O}_p) \backslash {\mathcal D}_{m,i}} (Xp^{m})^{\nu(\det W)}\varPi_p(W)F_p^{(0)}(T[W^{-1}],X).$$

 \bigskip

\begin{lems}
{\rm (1)} Suppose that $K_p$ is unramified over ${\bf Q}_p.$ 
  Let  $B_1 \in {\rm Her}_{m-n_0}({\mathcal O}_p).$ Then  we have
  $$\alpha_p(1_{n_0} \bot pB_1)=  \prod_{i=1}^{n_0} (1-(-p)^{-i}) \alpha_p(pB_1)$$
  
\noindent
{\rm (2)} Let $K_p={\bf Q }_p \oplus {\bf Q}_p.$  
  Let  $B_1 \in {\rm Her}_{m-n_0}({\mathcal O}_p).$ Then  we have
  $$\alpha_p(1_{n_0} \bot pB_1 )=  \prod_{i=1}^{n_0} (1-p^{-i})\alpha_p(pB_1) $$\\

\noindent
{\rm (3)} Suppose that $K_p$ is ramified over ${\bf Q}_p.$ Let $n_0$ be even. Let  $B_1 \in  {\rm Her}_{m-n_0,*}({\mathcal O}_p).$ 
 Then we have
 $$\alpha_p(\Theta_{n_0} \bot \pi^{i_p}B_1)= \prod_{i=1}^{n_0/2} (1-p^{-2i})\alpha_p(\pi^{i_p}B_1).$$
\end{lems}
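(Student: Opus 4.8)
The plan is to recognise the three products on the right-hand side as the self-densities $\alpha_p(\Xi_{n_0})$ of the leading Jordan component, so that the lemma becomes the \emph{multiplicativity} statement
$$\alpha_p(\Xi_{n_0} \bot C)=\alpha_p(\Xi_{n_0})\,\alpha_p(C),$$
where $\Xi_{n_0}=1_{n_0}$ in cases (1) and (2), $\Xi_{n_0}=\Theta_{n_0}$ in case (3), and $C$ denotes the scaled block $pB_1$ (resp. $\pi^{i_p}B_1$). Indeed, putting $l=m=n_0$ in Lemma 4.1.11 (1), (2), (3) and reindexing shows that $\prod_{i=1}^{n_0}(1-(-p)^{-i})$, $\prod_{i=1}^{n_0}(1-p^{-i})$, and $\prod_{i=1}^{n_0/2}(1-p^{-2i})$ are exactly $\alpha_p(1_{n_0})$, $\alpha_p(1_{n_0})$, and $\alpha_p(\Theta_{n_0})$.

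First I would split off the $C$-block by applying the ``in particular'' identity of Lemma 4.1.7 with $A_2=B_2=C$, which gives
$$\alpha_p(\Xi_{n_0} \bot C)=\alpha_p(\Xi_{n_0} \bot C,\Xi_{n_0})\,\alpha_p(C).$$
The hypotheses of Lemma 4.1.7 hold: $C=pB_1$ (resp. $\pi^{i_p}B_1$) is non-degenerate, and in the ramified case $n_0$ is even as required. Thus everything is reduced to identifying the representation density $\alpha_p(\Xi_{n_0} \bot C,\Xi_{n_0})$ with $\alpha_p(\Xi_{n_0})=\alpha_p(\Xi_{n_0},\Xi_{n_0})$.

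To evaluate $\alpha_p(\Xi_{n_0} \bot C,\Xi_{n_0})$ I would peel the represented form off one Jordan block at a time by iterating the second identity of Lemma 4.1.7, writing $\Xi_{n_0}=\Xi_1\bot\Xi_{n_0-1}$ in cases (1),(2) and $\Theta_{n_0}=\Theta_2\bot\Theta_{n_0-2}$ in case (3) (the parity constraint in Lemma 4.1.7 forces blocks of size two in the ramified case). This expresses $\alpha_p(\Xi_{n_0}\bot C,\Xi_{n_0})$ as a product of base densities $\alpha_p(1_l\bot C,1)$ for $l=n_0,\dots,1$ in cases (1),(2), resp. $\alpha_p(\Theta_{2k}\bot C,\Theta_2)$ for $2k=n_0,\dots,2$ in case (3). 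By the first identity of Lemma 4.1.7 each base factor equals the corresponding primitive density $\beta_p$, and since $C$ is divisible by $p$ (resp. by $\pi^{i_p}$) it reduces to $0$ and cannot enter a primitive representation of the unimodular (resp. $\Theta$-type) block; hence $\alpha_p(\Xi_l\bot C,\Xi_1)=\alpha_p(\Xi_l,\Xi_1)$, which Lemma 4.1.11 (with represented rank $1$, resp. $2$) evaluates as $1-(-p)^{-l}$, $1-p^{-l}$, and $1-p^{-2k}$. Multiplying these telescopes back to $\alpha_p(\Xi_{n_0})$, completing all three cases.

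The main obstacle is the last step: justifying rigorously that the divisible block $C$ does not affect the primitive representation of the leading Jordan component. Away from $2$ this is the elementary count underlying Lemma 4.1.9, but in the dyadic ramified cases ($p=2$ with $f_2=2$ or $3$) the appearance of $i_p$, of the refined space ${\rm Her}_{m,*}({\mathcal O}_p)$, and of $\Theta_r$ built from $\varpi^{i_p}$ makes the bookkeeping delicate; there I would fall back on Lemma 4.1.8 to detach $C$ from the represented form after the $(-B_1)$-twist, together with the explicit dyadic evaluations already carried out in Lemma 4.1.9.
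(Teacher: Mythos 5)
Your proposal is correct and follows the paper's own proof: the paper likewise applies Lemma 4.1.7 to factor $\alpha_p(\Xi_{n_0}\bot C)$ as $\alpha_p(\Xi_{n_0}\bot C,\Xi_{n_0})\,\alpha_p(C)$, identifies $\alpha_p(\Xi_{n_0}\bot C,\Xi_{n_0})$ with $\alpha_p(\Xi_{n_0})$ by the argument of Lemma 4.1.10 (the scaled block cannot contribute to a primitive representation of the leading component), and then evaluates $\alpha_p(\Xi_{n_0})$ via Lemma 4.1.11. The only difference is one of detail: you spell out the block-by-block peeling and the dyadic caveats explicitly, where the paper compresses this into a citation of ``the same argument as in the proof of Lemma 4.1.10.''
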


\begin{proof} Suppose that $K_p$ is unramified over ${\bf Q}_p.$ By Lemma 4.1.7, we have
$$\alpha_p(1_{n_0} \bot pB_1)=\alpha_p(1_{n_0} \bot pB_1,1_{n_0})\alpha_p(pB_1).$$
By using the same argument as in the proof of Lemma 4.1.10, we can prove that we have 
$$\alpha_p(1_{n_0} \bot pB_1,1_{n_0})=\alpha_p(1_{n_0}),$$
 and hence by Lemma 4.1.11, we have
$$\alpha_p(1_{n_0} \bot pB_1)=\prod_{i=1}^{n_0} (1-(-p)^{-i})\alpha_p(pB_1).$$
This proves the assertion (1).
Similarly the assertions (2) and (3) can be  proved. 
\end{proof}

 \begin{lems}
 Let $m$ be a positive integer and $r$ a non-negative integer such that $r \le m.$
 
 \noindent
{\rm (1)} Suppose that $K_p$ is unramified over ${\bf Q}_p.$  Let $T=1_{m-r} \bot pB_1$  with $B_1 \in {\rm Her}_{r}({\mathcal O}_p).$ Then
  $$\beta_p(1_{2k},T)= \prod_{i=0}^{m+r-1}(1-p^{-2k+i}(-1)^i).$$

\noindent
 {\rm (2)} Suppose that $K_p={\bf Q}_p \oplus {\bf Q}_p.$  Let $T=1_{m-r} \bot pB_1$  with $B_1 \in {\rm Her}_{r}({\mathcal O}_p).$ Then
  $$\beta_p(1_{2k},T)= \prod_{i=0}^{m+r-1}(1-p^{-2k+i}).$$

 \noindent
 {\rm (3)} Suppose that $K_p$ is ramified over ${\bf Q}_p$  and that  $m-r$ is even. Let $T=\Theta_{m-r} \bot \pi^{i_p}B_1$  with $B_1 \in {\rm Her}_{r,*}({\mathcal O}_p).$ Then
  $$\beta_p(\Theta_{2k},T)= \prod_{i=0}^{(m+r-2)/2}(1-p^{-2k+2i}).$$
  \end{lems}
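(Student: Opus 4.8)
The plan is to prove all three parts by a single mechanism: split off the $p$-divisible block of the second argument using Lemma 4.1.8, evaluate the two resulting factors by Lemmas 4.1.10 and 4.1.11, and reassemble the answer by a shift of index. The hypothesis $k \ge m$ (with $r \le m$) is exactly what guarantees the size conditions in those lemmas.

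First I would reorder the orthogonal summands so as to write the second argument as $pB_1 \bot 1_{m-r}$ in case (1), and as $\pi^{i_p}B_1 \bot \Theta_{m-r}$ in case (3). Peeling off the $p$-divisible block, of degree $r$, by Lemma 4.1.8 (whose hypotheses $2k \ge 2r$ and $k \ge r$ follow from $k \ge m \ge r$) gives in case (1)
$$\beta_p(1_{2k},\,pB_1 \bot 1_{m-r})=\beta_p(1_{2k},\,pB_1)\,\beta_p\bigl((-pB_1)\bot 1_{2k-2r},\,1_{m-r}\bigr),$$
and the analogous identity with $\Theta$'s in case (3). The first factor is exactly the quantity evaluated in Lemma 4.1.10, namely $\prod_{i=0}^{2r-1}(1-(-1)^i p^{-2k+i})$ in case (1) and $\prod_{i=0}^{r-1}(1-p^{-2k+2i})$ in case (3). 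Retaining the full even-rank block $1_{2k}$ (resp. $\Theta_{2k}$) on the left is the reason I peel the $p$-divisible block rather than the unit block: Lemma 4.1.10 is available only for $1_{2k}$ and $\Theta_{2k}$, so peeling the unit block would leave an odd rank on the left to which it does not apply.

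For the second factor I would show that the residual block $-pB_1$ (resp. $-\pi^{i_p}B_1$) drops out of a primitive representation of the unit form $1_{m-r}$ (resp. $\Theta_{m-r}$). Indeed that block lies in the radical modulo $\varpi$, so in $A[X]=X_1^*(-pB_1)X_1+X_2^* 1_{2k-2r} X_2$ the constraint forces $X_2^* X_2 \equiv 1_{m-r}$ to be nondegenerate while the entries of $X_1$ range freely; the free entries compensate the change of normalization exactly, yielding $\beta_p((-pB_1)\bot 1_{2k-2r},1_{m-r})=\beta_p(1_{2k-2r},1_{m-r})$, precisely as in the computation $\alpha_p(1_{n_0}\bot pB_1,1_{n_0})=\alpha_p(1_{n_0})$ used in the proof of Lemma 4.2.1. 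This second factor is then evaluated by Lemma 4.1.11 (the condition $2k-2r\ge m-r$, i.e. $k-r\ge (m-r)/2$ in case (3), again following from $k\ge m\ge r$), giving $\prod_{i=0}^{m-r-1}(1-(-1)^i p^{-2k+2r+i})$ in case (1) and $\prod_{i=0}^{(m-r)/2-1}(1-p^{-2(k-r)+2i})$ in case (3). Multiplying the two factors and reindexing the second product so that it continues the first --- the shift being $2r$ in case (1), which is even and hence preserves the sign $(-1)^i$, and $r$ in case (3), where no signs occur --- concatenates them into the single product asserted.

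The one case not directly covered is the split case (2), and this is the main point requiring care, since Lemma 4.1.8 is stated only for $K_p$ a field. I would handle it either by recording the evident analogue of Lemma 4.1.8 for $K_p={\bf Q}_p\oplus{\bf Q}_p$ --- the same Hermitian-module argument applies once ${\rm Her}_m({\mathcal O}_p)$ is identified with $M_m({\bf Z}_p)$ under the swap involution --- after which the computation is verbatim the one above, using Lemmas 4.1.10(2) and 4.1.11(2) and producing $\prod_{i=0}^{m+r-1}(1-p^{-2k+i})$; or else by the direct rank count over ${\bf Z}_p/p{\bf Z}_p$ already employed in the proofs of those two lemmas. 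Apart from the split case, the only genuinely technical steps are the justification of the drop-out of the radical block and the parity bookkeeping of the signs in case (1), both of which are routine given the cited lemmas.
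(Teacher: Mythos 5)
Your proposal is correct and follows essentially the same route as the paper: split off the $p$-divisible (resp.\ $\pi^{i_p}$-divisible) block via Lemma 4.1.8, evaluate that factor by Lemma 4.1.10, argue that the residual radical block drops out of the primitive representation of the unit block so that the remaining factor is $\beta_p(1_{2k-2r},1_{m-r})$ (resp.\ $\beta_p(\Theta_{2k-2r},\Theta_{m-r})$) evaluated by Lemma 4.1.11, and concatenate the two products. Your explicit treatment of the split case (2), which Lemma 4.1.8 as stated does not cover and which the paper dismisses with ``similarly,'' is a welcome extra precaution but does not change the substance of the argument.
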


\begin{proof}
Suppose that $K_p$ is unramified over ${\bf Q}_p.$ By Lemma 4.1.8, we have
$$\beta_p(1_{2k},T)=\beta_p(1_{2k},pB_1)\beta_p( (-pB_1) \bot 1_{2k-2r},1_{m-r}).$$
By using the same argument as in the proof of Lemma 4.1.11, we can prove that we have
$\beta_p( (-pB_1) \bot 1_{2k-2r},1_{m-r})=\beta_p(1_{2k-2r},1_{m-r}).$ Hence the assertion follows from  Lemmas 4.1.10 and 4.1.11.
 Similarly the assertions (2) and (3) can be proved.

\end{proof}

\bigskip
 \begin{xcor}
   {\rm (1)} Suppose that $K_p$ is unramified over ${\bf Q}_p$ or $K_p={\bf Q}_p \oplus {\bf Q}_p.$ Let $T=1_{m-r} \bot pB_1$  with $B_1 \in {\rm Her}_r({\mathcal O}_p).$ Then we have
  $$G_p(T,Y)= \prod_{i=0}^{r-1}(1-(\xi_p p)^{m+i}Y).$$
 
 {\rm (2)} Suppose that $K_p$ is ramified over ${\bf Q}_p$ and that  $m-r$ is even. Let $T=\Theta_{m-r} \bot \pi^{i_p}B_1$  with $B_1 \in {\rm Her}_{r,*}({\mathcal O}_p).$  Then
   $$G_p(T,Y)=\prod_{i=0}^{[(r-2)/2]}(1-p^{2i+2[(m+1)/2]}Y).$$
   \end{xcor}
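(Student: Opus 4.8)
The plan is to identify $G_p(T,X)$, at the special values $X=p^{-2k}$, with a normalized primitive representation density, and then to read the answer off from the density formulas already proved. First I would rewrite the primitive density of $T$ by a large split form using the $\varPi_p$-weighted inversion in the Corollary to Lemma 4.1.2. Taking the ambient form to be $1_{2k}$ (resp. $\Theta_{2k}$ when $K_p$ is ramified) and noting that there the ambient form has size $2k$ while $T$ has size $m$, so that the weight is $p^{(m-2k)\nu(\det W)}$ and $\varPi_p$ is supported on $\bigcup_i{\mathcal D}_{m,i}$, that Corollary reads
$$\beta_p(1_{2k},T)=\sum_{i=0}^{m}\sum_{W\in GL_m({\mathcal O}_p)\backslash {\mathcal D}_{m,i}} p^{(m-2k)\nu(\det W)}\varPi_p(W)\,\alpha_p(1_{2k},T[W^{-1}]).$$
The decisive input is the standard relation between the representation density by a unimodular (resp. split even) Hermitian lattice and the Siegel series,
$$\alpha_p(1_{2k},T')=\alpha_p(1_{2k},1_m)\,F_p^{(0)}(T',p^{-2k})$$
valid for every $T'\in\widetilde{\rm Her}_m({\mathcal O}_p)^{\times}$ and all large $k$, with $1_{2k},1_m$ replaced by $\Theta_{2k},\Theta_{2[(m+1)/2]}$ in the ramified case.

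Substituting this identity term by term, the factor $p^{(m-2k)\nu(\det W)}$ becomes exactly $(Xp^{m})^{\nu(\det W)}$ at $X=p^{-2k}$, so a comparison with the definition of $G_p$ yields the key link
$$\beta_p(1_{2k},T)=\alpha_p(1_{2k},1_m)\,G_p(T,p^{-2k}),$$
together with its ramified analogue $\beta_p(\Theta_{2k},T)=\alpha_p(\Theta_{2k},\Theta_{2[(m+1)/2]})\,G_p(T,p^{-2k})$. Thus $G_p(T,p^{-2k})$ is the quotient of two densities that are already known in closed form.

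It then remains to divide. In case (1), Lemma 4.2.2 gives $\beta_p(1_{2k},T)=\prod_{i=0}^{m+r-1}(1-(\xi_pp)^{-2k+i})$ and Lemma 4.1.11 gives $\alpha_p(1_{2k},1_m)=\prod_{i=0}^{m-1}(1-(\xi_pp)^{-2k+i})$, the two writings being uniform for the unramified case $\xi_p=-1$ and the split case $\xi_p=1$. The quotient telescopes to $\prod_{i=m}^{m+r-1}(1-(\xi_pp)^{-2k+i})=\prod_{i=0}^{r-1}(1-(\xi_pp)^{m+i}p^{-2k})$, using $\xi_p^{-2k}=1$. Hence $G_p(T,p^{-2k})$ agrees with the polynomial $\prod_{i=0}^{r-1}(1-(\xi_pp)^{m+i}Y)$ at $Y=p^{-2k}$. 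Since $G_p(T,Y)$ is a genuine polynomial in $Y$ (a finite $\varPi_p$-supported sum of Siegel-series polynomials times non-negative powers of $Y$) and this equality holds for all large $k$, i.e. at infinitely many distinct points $Y=p^{-2k}$, the two polynomials coincide, proving (1). Case (2) is identical: I would divide Lemma 4.2.2(3), $\beta_p(\Theta_{2k},T)=\prod_{i=0}^{(m+r-2)/2}(1-p^{-2k+2i})$, by $\alpha_p(\Theta_{2k},\Theta_{2[(m+1)/2]})=\prod_{i=0}^{[(m+1)/2]-1}(1-p^{-2k+2i})$ from Lemma 4.1.11(3); the surviving tail reindexes to exactly $\prod_{i=0}^{[(r-2)/2]}(1-p^{2i+2[(m+1)/2]}Y)$ (the top index matching $(m+r-2)/2-[(m+1)/2]=[(r-2)/2]$ in both parities of $m$), and the polynomial-identity argument concludes.

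The main obstacle is the density--Siegel series identity $\alpha_p(1_{2k},T')=\alpha_p(1_{2k},1_m)F_p^{(0)}(T',p^{-2k})$, on which the whole reduction rests. In the unramified and split cases it is classical, but in the ramified case it is delicate: the natural reference lattice must have even rank $2[(m+1)/2]$, so when $m$ is odd one is representing a rank-$m$ form inside a rank-$(m+1)$ reference, and one must verify that the extra factor produced is precisely $\alpha_p(\Theta_{2k},\Theta_{2[(m+1)/2]})$ and nothing more. Checking this compatibility, along with the correct matching of the weights $p^{e_pm-f_p[m/2]}$ built into $F_p^{(0)}$ through the normalization $\widetilde F_p^{(0)}(T,X)=X^{-{\rm ord}(\det T)}X^{e_pm-f_p[m/2]}F_p^{(0)}(T,p^{-m}X^2)$, is exactly the careful ramified bookkeeping flagged in the introduction.
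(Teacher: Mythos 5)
Your proof is correct and follows essentially the same route as the paper: both identify $G_p(T,p^{-2k})$ with $\beta_p(\Xi_{2k},T)$ divided by a normalizing product (via the $\varPi_p$-weighted inversion and the density--Siegel-series identity), then evaluate using Lemma 4.2.2 and conclude by a polynomial identity at the infinitely many points $Y=p^{-2k}$. The identity you flag as the main obstacle is exactly what the paper imports from Lemma 14.8 of Shimura \cite{Sh1} (namely $b_p(p^{-e_p}B,2k)=\alpha_p(\Xi_{2k},B)$ combined with the defining factorization of $b_p$ in terms of $F_p$), and your normalizing factor $\alpha_p(\Xi_{2k},\Xi_{2[(m+1)/2]})$ (resp.\ $\alpha_p(1_{2k},1_m)$) coincides with the paper's explicit product by Lemma 4.1.11.
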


\begin{proof} Let $k$ be a positive integer such that $k \ge m.$ Put $\Xi_{2k}=\Theta_{2k}$ or $1_{2k}$ according as $K_p$ is ramified over ${\bf Q}_p$ or not. 
Then it follows from Lemma 14.8 of \cite{Sh1} that for $B \in \widetilde {Her}_m({\mathcal O}_p)^{\times}$ we have
$$b_p(p^{-e_p}B,2k)=\alpha_p(\Xi_{2k},B).$$
Hence, by the definition of $G_p(T,X)$ and Corollary to Lemma 4.1.2, we have 
$$\beta_p(\Xi_{2k},T)=G_p(T,p^{-2k})\prod_{i=0}^{[(m-1)/2]}(1-p^{2i-2k})\prod_{i=1}^{[m/2]} (1-\xi_p p^{2i-1-2k}). $$ 
Suppose that $K_p$ is unramified over ${\bf Q}_p$ or $K_p={\bf Q}_p \oplus {\bf Q}_p.$ Then by Lemma 4.2.2, we have
$$G_p(T,p^{-2k})= \prod_{i=0}^{r-1}(1-(\xi_p p)^{m+i}p^{-2k}).$$
This equality holds for infinitely many positive integer $k,$ and the both hand sides of it are polynomials in $p^{-2k}.$  
Thus the assertion (1) holds. Similarly the assertion (2) holds.   
\end{proof}  

\bigskip
 
\begin{lems}
Let $B \in {\widetilde{\rm Her}}_{m}({\mathcal O}_p).$ Then we have
$$F_p^{(0)}(B,X)= \sum_{W \in  GL_{m}({\mathcal O}_p) \backslash \widetilde \Omega(B)} G_p(B[W^{-1}],X)(p^mX)^{\nu(\det W)}.$$
\end{lems}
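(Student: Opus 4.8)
The plan is to realize $F_p^{(0)}(B,\,\cdot\,)$ and $G_p(\,\cdot\,,\,\cdot\,)$ as renormalized local and primitive local densities attached to the auxiliary form $\Xi_{2k}$, and then to extract the identity from Lemma 4.1.2 by a polynomial-identity argument. First I would recall the two relations established in the proof of the Corollary to Lemma 4.2.2: for an integer $k \ge m$ and a non-degenerate $B \in \widetilde{\rm Her}_m({\mathcal O}_p)$ one has
$$\alpha_p(\Xi_{2k},B)=F_p^{(0)}(B,p^{-2k})\,C_m(p^{-2k}),\qquad \beta_p(\Xi_{2k},B)=G_p(B,p^{-2k})\,C_m(p^{-2k}),$$
where $C_m(X)=\prod_{i=0}^{[(m-1)/2]}(1-p^{2i}X)\prod_{i=1}^{[m/2]}(1-\xi_p p^{2i-1}X)$ is the product factor appearing in the defining relation of $F_p$. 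The first follows from $b_p(p^{-e_p}B,2k)=\alpha_p(\Xi_{2k},B)$ together with $F_p^{(0)}(B,X)=F_p(p^{-e_p}B,X)$, and the second is the formula displayed in that Corollary. The key point is that $C_m(X)$ depends only on the degree $m$ and on $\xi_p$, hence is literally the same polynomial for $B$ and for every $B[W^{-1}]$.

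Next I would apply Lemma 4.1.2 with $S=\Xi_{2k}$ of degree $2k$ and $T=B$ of degree $m$ (legitimate since $2k\ge m$), obtaining
$$\alpha_p(\Xi_{2k},B)=\sum_{W\in GL_m({\mathcal O}_p)\backslash M_m({\mathcal O}_p)^{\times}}p^{(m-2k)\nu(\det W)}\,\beta_p(\Xi_{2k},B[W^{-1}]).$$
The decisive observation is that $\beta_p(\Xi_{2k},B[W^{-1}])$ vanishes unless $B[W^{-1}]\in\widetilde{\rm Her}_m({\mathcal O}_p)$, that is, unless the class of $W$ lies in $\widetilde\Omega(B)$: since $\Xi_{2k}[X]$ is semi-integral for integral $X$, a primitive integral representation of $B[W^{-1}]$ by $\Xi_{2k}$ can exist only when $B[W^{-1}]$ is itself semi-integral. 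Hence the sum collapses to $W\in GL_m({\mathcal O}_p)\backslash\widetilde\Omega(B)$, and on this range I substitute the two product formulas, cancel the common nonzero factor $C_m(p^{-2k})$, and rewrite $p^{(m-2k)\nu(\det W)}=(p^m p^{-2k})^{\nu(\det W)}=(p^mX)^{\nu(\det W)}$ at $X=p^{-2k}$. This gives
$$F_p^{(0)}(B,X)=\sum_{W\in GL_m({\mathcal O}_p)\backslash\widetilde\Omega(B)}(p^mX)^{\nu(\det W)}\,G_p(B[W^{-1}],X)$$
for all $X=p^{-2k}$ with $k\ge m$. Finally, both sides are polynomials in $X$ (the coset space $GL_m({\mathcal O}_p)\backslash\widetilde\Omega(B)$ is finite, as $\nu(\det W)$ is bounded on it), and they agree at the infinitely many values $X=p^{-2k}$; therefore they coincide as polynomials, which is the assertion.

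The step I expect to be the main obstacle is the reduction of the index set from $M_m({\mathcal O}_p)^{\times}$ to $\widetilde\Omega(B)$: one must verify precisely that $\beta_p(\Xi_{2k},B[W^{-1}])=0$ whenever $B[W^{-1}]\notin\widetilde{\rm Her}_m({\mathcal O}_p)$, so that the surviving terms are exactly the classes in $\widetilde\Omega(B)$ on which $G_p(B[W^{-1}],X)$ is defined. Alongside this I would check that the whole construction descends to the left coset space — so that $\nu(\det W)$, the class of $B[W^{-1}]$ modulo $GL_m({\mathcal O}_p)$-equivalence, and hence $G_p(B[W^{-1}],X)$ are well defined on $GL_m({\mathcal O}_p)\backslash\widetilde\Omega(B)$ — and that $C_m$ is genuinely unchanged under $B\mapsto B[W^{-1}]$. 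These verifications are routine given Lemma 4.1.2 and the Corollary to Lemma 4.2.2, but they are where the argument requires care.
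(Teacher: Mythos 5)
Your proposal is correct and follows exactly the paper's route: the paper's proof of this lemma likewise applies Lemma 4.1.2 with $S=\Xi_{2k}$, $T=B$ (with the sum already restricted to $\widetilde\Omega(B)$) and then invokes ``the same argument as in the proof of Corollary to Lemma 4.2.2,'' which is precisely the cancellation of the common factor $C_m(p^{-2k})$ and the polynomial-identity argument over infinitely many $k\ge m$ that you spell out. The details you flag as requiring care (vanishing of $\beta_p$ off $\widetilde\Omega(B)$, well-definedness on cosets, nonvanishing of $C_m(p^{-2k})$) are exactly the ones the paper leaves implicit, and your treatment of them is sound.
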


\begin{proof}
Let $k$ be a positive integer such that $k \ge m.$
By Lemma 4.1.2, we have
$$\alpha_p(\Xi_{2k},B)=\sum_{W \in  GL_{m}({\mathcal O}_p) \backslash \widetilde \Omega(B)} \beta_p(\Xi_{2k},B[W^{-1}])p^{(-2k+m)\nu(\det W)}.$$
Then the assertion can be proved by using the same argument as in the proof of Corollary to Lemma 4.2.2.
\end{proof}

 \begin{xcor}
Let $B \in {\widetilde{\rm Her}}_{m}({\mathcal O}_p).$ Then we have
$$\widetilde F^{(0)}(B,X)=X^{e_pm-f_p[m/2]}\sum_{B' \in  {\widetilde{\rm Her}}_{m}({\mathcal O}_p) / GL_{m}({\mathcal O}_p)  } X^{-{\rm ord}(\det B')}{\alpha_p(B',B) \over \alpha_p(B')}$$$$ \times  G_p(B',p^{-m}X^2)X^{{\rm ord}(\det B)-{\rm ord}(\det B')}.$$
\end{xcor}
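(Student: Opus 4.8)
The plan is to reduce the Corollary to the identity of Lemma 4.2.3 by the substitution $X\mapsto p^{-m}X^2$ and then to re-index the resulting sum by the $GL_m({\mathcal O}_p)$-equivalence class of $B[W^{-1}]$. First I would recall the normalization recorded before Section 4,
$$\widetilde F_p^{(0)}(B,X)=X^{-{\rm ord}(\det B)}X^{e_pm-f_p[m/2]}F_p^{(0)}(B,p^{-m}X^2),$$
so that it suffices to expand $F_p^{(0)}(B,p^{-m}X^2)$. Replacing $X$ by $p^{-m}X^2$ in Lemma 4.2.3 turns the weight $(p^mX)^{\nu(\det W)}$ into $(X^2)^{\nu(\det W)}$, yielding
$$F_p^{(0)}(B,p^{-m}X^2)=\sum_{W\in GL_m({\mathcal O}_p)\backslash\widetilde\Omega(B)}G_p(B[W^{-1}],p^{-m}X^2)\,X^{2\nu(\det W)}.$$

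Next I would rewrite the exponent $\nu(\det W)$ in determinantal terms. Since $B[W^{-1}]=(W^{-1})^{*}BW^{-1}$ one has $\det(B[W^{-1}])=\det B/N_{K_p/{\bf Q}_p}(\det W)$, hence $\nu(\det W)={\rm ord}(\det B)-{\rm ord}(\det B[W^{-1}])$. Both ${\rm ord}(\det\,\cdot\,)$ and $G_p(\,\cdot\,,X)$ are invariant under $GL_m({\mathcal O}_p)$-equivalence — for $G_p$ this follows from its defining sum via the substitution $W\mapsto WU^{-1}$, which permutes the cosets $GL_m({\mathcal O}_p)\backslash{\mathcal D}_{m,i}$ and preserves $\nu(\det W)$ and $\varPi_p(W)$ — so the summand depends on $W$ only through the class $B':=B[W^{-1}]$ in $\widetilde{\rm Her}_m({\mathcal O}_p)/GL_m({\mathcal O}_p)$. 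I would therefore group the sum by $B'$, replacing $X^{2\nu(\det W)}$ with $X^{2({\rm ord}(\det B)-{\rm ord}(\det B'))}$ in each group.

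The multiplicity attached to a fixed class $B'$ is the crux. The set of $W\in\widetilde\Omega(B)$ with $B[W^{-1}]\sim B'$ is exactly $\widetilde\Omega(B',B)=\{W\mid B'\sim B[W^{-1}]\}$ from Lemma 4.1.3; here $\widetilde\Omega(B',B)\subseteq\widetilde\Omega(B)$ automatically, since $B'\in\widetilde{\rm Her}_m({\mathcal O}_p)$ and this set is stable under $GL_m({\mathcal O}_p)$-equivalence, and conversely each $W\in\widetilde\Omega(B)$ falls into one such class. Lemma 4.1.3 (2) then gives that the number of these $W$ modulo $GL_m({\mathcal O}_p)$ equals $\alpha_p(B',B)/\alpha_p(B')$, and classes not represented by $B$ contribute $\alpha_p(B',B)=0$, so the sum extends over all of $\widetilde{\rm Her}_m({\mathcal O}_p)/GL_m({\mathcal O}_p)$. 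Restoring the prefactor $X^{-{\rm ord}(\det B)}X^{e_pm-f_p[m/2]}$ collapses the $X$-powers to $X^{-{\rm ord}(\det B')}X^{{\rm ord}(\det B)-{\rm ord}(\det B')}$, giving the asserted formula.

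I expect the main obstacle to lie in the bookkeeping of the re-indexing: one must verify carefully that the fibre of $W\mapsto[B[W^{-1}]]$ over a class $B'$ is precisely $GL_m({\mathcal O}_p)\backslash\widetilde\Omega(B',B)$ rather than a proper subset, together with the class-invariance of $G_p$. The exponent arithmetic is then routine once the determinant identity $\det(B[W^{-1}])=\det B/N_{K_p/{\bf Q}_p}(\det W)$ is in hand.
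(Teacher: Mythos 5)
Your proposal is correct and follows essentially the same route as the paper: substitute $X\mapsto p^{-m}X^2$ in Lemma 4.2.3, regroup the sum over $W\in GL_m({\mathcal O}_p)\backslash\widetilde\Omega(B)$ according to the $GL_m({\mathcal O}_p)$-class $B'=B[W^{-1}]$, and evaluate the multiplicity $\#(GL_m({\mathcal O}_p)\backslash\widetilde\Omega(B',B))=\alpha_p(B',B)/\alpha_p(B')$ via Lemma 4.1.3 (2). The extra details you supply (the determinant identity giving $\nu(\det W)={\rm ord}(\det B)-{\rm ord}(\det B')$, the class-invariance of $G_p$, and the fibre identification) are exactly the points the paper leaves implicit.
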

\begin{proof}  We have 
$$\widetilde F^{(0)}(B,X)=X^{e_pm-f_p[m/2]}X^{-{\rm ord}(\det B)}F^{(0)}(B,p^{-m}X^2) $$
$$=X^{e_pm-f_p[m/2]}\sum_{W \in  GL_{m}({\mathcal O}_p) \backslash \widetilde \Omega(B)} X^{-{\rm ord}(\det B)}G_p(B[W^{-1}],p^{-m}X^2)(X^2)^{\nu(\det W)}$$
$$=X^{e_pm-f_p[m/2]}$$
$$ \times \sum_{B' \in   {\widetilde{\rm Her}}_{m}({\mathcal O}_p)/GL_{m}({\mathcal O}_p)}\sum_{W \in  GL_{m}({\mathcal O}_p) \backslash \widetilde \Omega(B',B)} X^{-{\rm ord}(\det B)}G_p(B',p^{-m} X^2)(X^2)^{\nu(\det W)}$$
$$=X^{e_pm-f_p[m/2]}\sum_{B' \in   {\widetilde{\rm Her}}_{m}({\mathcal O}_p)/GL_{m}({\mathcal O}_p)  }X^{-{\rm ord}(\det B')}  \#(GL_{m}({\mathcal O}_p) \backslash \widetilde {\Omega}(B',B))$$
$$ \times  G_p(B',p^{-m}X^2)X^{{\rm ord}(\det B)-{\rm ord}(\det B')}.$$ 
Thus the assertion follows from (2) of Lemma 4.1.3.
\end{proof}

\bigskip

Let
$$\widetilde{\mathcal F}_{m,p}(d_0)=\bigcup_{i=0}^{\infty} (\widetilde {\rm Her}_m(\pi^id_0N_{K_p/{\bf Q}_p}({\mathcal O}_p^*),{\mathcal O}_p),$$
and  
$${\mathcal F}_{m,p,*}(d_0)=\widetilde{\mathcal F}_{m,p}(d_0) \cap {\rm Her}_{m,*}({\mathcal O}_p).$$
 
First suppose that $K_p$ is unramified over ${\bf Q}_p$ or $K_p={\bf Q}_p \oplus {\bf Q}_p.$ Let  $H_{m}$  be a function on  ${\rm Her}_m({\mathcal O}_p)^{\times}$  satisfying the following condition: 

\bigskip

$H_{m}(1_{m-r} \bot pB)=H_{r}(pB)$  for any  $B \in {\rm Her}_{r}({\mathcal O}_p).$

\bigskip

\noindent
Let $d_0 \in {\bf Z}_p^*.$  Then we put
$$Q(d_0,H_{m},r,t)= \sum_{B \in p^{-1}\widetilde{\mathcal F}_{r,p}(d_0) \cap {\rm Her}_{r} ({\mathcal O}_p)}{H_{m}(1_{m-r} \bot pB) \over \alpha_p(1_{m-r} \bot pB)}t^{{\rm ord}(\det (pB))}.$$
Next suppose that $K_p$ is ramified over ${\bf Q}_p.$  
Let  $H_{m}$  be a function on  ${\rm Her}_m({\mathcal O}_p)^{\times}$  satisfying the following condition: 

\bigskip

$H_{m}(\Theta_{m-r} \bot \pi^{i_p}B)=H_{r}(\pi^{i_p}B)$  for any  $B \in {\rm Her}_{r,*}({\mathcal O}_p)$ if $m-r$ is even. 

\bigskip

\noindent
Let $d_0 \in {\bf Z}_p^*$ and $m-r$ be even.   Then we put
$$Q(d_0,H_{m},r,t)=\sum_{B \in \pi^{-i_p}\widetilde {\mathcal F}_{r,p}(d_0) \cap {\rm Her}_{r,*} ({\mathcal O}_p)}{H_{m}(\Theta_{m-r} \bot \pi^{i_p}B)  \over \alpha_p(\Theta_{m-r} \bot \pi^{i_p}B)}t^{{\rm ord}(\det (\pi^{i_p}B))}.$$
Then by Lemma 4.2.1 we easily obtain the following. 

\bigskip

   
\begin{props}   
  
  \noindent 
  {\rm (1)} Suppose that $K_p$ is unramified over ${\bf Q}_p$ or $K_p={\bf Q}_p \oplus {\bf Q}_p.$ Then for any $d_0 \in {\bf Z}_p^*$ and a non-negative integer $r$ we have 
$$Q(d_0,H_{m},r,t)={Q(d_0,H_{r},r,t) \over \phi_{m-r}(\xi_p p^{-1})}.$$

  \noindent 
  {\rm (2)} Suppose that $K_p$ is ramified over ${\bf Q}_p.$  Then for any $d_0 \in {\bf Z}_p^*$ and a non-negative integer $r$ such that $m-r$ is even, we have 
$$Q(d_0,H_{m},r,t)={Q(d_0,H_{r},r,t) \over \phi_{(m-r)/2}(p^{-2})}.$$
\end{props}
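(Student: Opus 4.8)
The plan is to prove both parts by directly substituting the defining recursion for $H_m$ into the numerator of the series and the splitting of the local density from Lemma 4.2.1 into the denominator, after which the resulting sum is recognized term-by-term as $Q(d_0,H_r,r,t)$.

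First, in the unramified or split case, I would start from the defining sum
$$Q(d_0,H_m,r,t)=\sum_{B} \frac{H_m(1_{m-r}\bot pB)}{\alpha_p(1_{m-r}\bot pB)}\,t^{{\rm ord}(\det(pB))},$$
where $B$ ranges over $p^{-1}\widetilde{\mathcal F}_{r,p}(d_0)\cap {\rm Her}_r({\mathcal O}_p)$, and replace the numerator by $H_r(pB)$ using the assumed property $H_m(1_{m-r}\bot pB)=H_r(pB)$. Then I would apply Lemma 4.2.1 with $n_0=m-r$ to factor $\alpha_p(1_{m-r}\bot pB)=\bigl(\prod_{i=1}^{m-r}(1-(-p)^{-i})\bigr)\alpha_p(pB)$ in the unramified case, and with the product $\prod_{i=1}^{m-r}(1-p^{-i})$ in the split case. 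The key bookkeeping step is to observe that these products equal $\phi_{m-r}(\xi_p p^{-1})$: in the unramified case $\xi_p=-1$ and $\prod_{i=1}^{m-r}(1-(-p)^{-i})=\prod_{i=1}^{m-r}(1-(-1)^i p^{-i})=\phi_{m-r}(-p^{-1})$, while in the split case $\xi_p=1$ and the product is $\phi_{m-r}(p^{-1})$. Since this factor is independent of $B$, it pulls out of the sum, leaving exactly $Q(d_0,H_r,r,t)$ (here the orthogonal summand collapses to the empty matrix $1_0$ when $m=r$ in the definition of $Q$).

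For the ramified case the argument is identical in structure: I would use $H_m(\Theta_{m-r}\bot \pi^{i_p}B)=H_r(\pi^{i_p}B)$ in the numerator and Lemma 4.2.1(3) with $n_0=m-r$ (even) to factor $\alpha_p(\Theta_{m-r}\bot\pi^{i_p}B)=\bigl(\prod_{i=1}^{(m-r)/2}(1-p^{-2i})\bigr)\alpha_p(\pi^{i_p}B)$. This product is precisely $\phi_{(m-r)/2}(p^{-2})$, which again factors out of the sum to give the claimed identity.

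There is no serious obstacle here — the statement is a formal consequence of Lemma 4.2.1 once the recursion for $H_m$ is invoked, which is why the text announces it with ``by Lemma 4.2.1 we easily obtain.'' The only point requiring genuine care is matching the explicit products of Lemma 4.2.1 with $\phi_{m-r}(\xi_p p^{-1})$ (resp. $\phi_{(m-r)/2}(p^{-2})$) by tracking the correct value of $\xi_p$ in each of the three cases, and confirming that the sum defining $Q(d_0,H_r,r,t)$ is exactly the reduced sum obtained after removing the $B$-independent factor.
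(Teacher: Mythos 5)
Your proposal is correct and follows exactly the route the paper intends: the paper gives no written proof beyond the remark ``by Lemma 4.2.1 we easily obtain the following,'' and your argument---substitute the recursion $H_m(1_{m-r}\bot pB)=H_r(pB)$ (resp.\ $H_m(\Theta_{m-r}\bot\pi^{i_p}B)=H_r(\pi^{i_p}B)$), factor the local density via Lemma 4.2.1, and identify the $B$-independent constant with $\phi_{m-r}(\xi_p p^{-1})$ (resp.\ $\phi_{(m-r)/2}(p^{-2})$)---is precisely that easy deduction, with the $\xi_p$ bookkeeping done correctly.
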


\bigskip


 \subsection{Explicit formulas of formal power series of Koecher-Maass type}
 
 { }
 \noindent
 { }
 
 \bigskip
 
 In this section we give an explicit formula for $P_m(d_0,X,t).$  

\begin{thms}
 Let $m$ be even, and $d_0 \in {\bf Z}_{p}^*.$ 
 
\noindent 
 {\rm (1)} Suppose that $K_p$ is unramified over ${\bf Q}_p.$   Then 
 $$P_m(d_0,X,t)={ 1 \over \phi_{m}(-p^{-1})\prod_{i=1}^{m} (1-t (-p)^{-i}X)(1+t(-p)^{-i}X^{-1})  }.$$

\noindent  
  {\rm (2)} Suppose that $K_p={\bf Q}_p \oplus {\bf Q}_p.$ Then 
  $$P_m(d_0,X,t)={ 1 \over \phi_{m}(p^{-1})\prod_{i=1}^{m} (1-t p^{-i}X)(1-tp^{-i}X^{-1})  }.$$

\noindent 
 {\rm (3)} Suppose that $K_p$ is ramified over ${\bf Q}_p.$   Then 
 $$P_m(d_0,X,t)={ t^{mi_p/2} \over 2\phi_{m/2}(p^{-2})}$$
$$\times \left\{{1 \over \prod_{i=1}^{m/2} (1-t p^{-2i+1}X^{-1})(1-tp^{-2i}X)} + {\chi_{K_p}((-1)^{m/2}d_0)  \over \prod_{i=1}^{m/2} (1-t p^{-2i}X^{-1})(1-tp^{-2i+1}X)}\right \}.$$

\end{thms}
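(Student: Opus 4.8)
The plan is to begin from the definitions of $P_m(d_0,X,t)$ and $\lambda_{m,p}^*$, so that
$$P_m(d_0,X,t)=\sum_{A}\frac{\widetilde F_p^{(0)}(A,X)}{\alpha_p(A)}\,t^{{\rm ord}(\det A)},$$
where $A$ runs over the $GL_m({\mathcal O}_p)$-classes in $\widetilde{\rm Her}_m({\mathcal O}_p)$ whose determinant lies in the norm class of $d_0.$ First I would eliminate the Siegel-series factor in favour of the explicit polynomials $G_p$: substituting the Corollary to Lemma 4.2.3 and interchanging summation, the $A$-sum, for each fixed representative $B',$ collapses to the average $\sum_{A}\alpha_p(B',A)/\alpha_p(A)\,(Xt)^{{\rm ord}(\det A)}.$ By part (1) of Lemma 4.1.3 this equals a sum over $w\in M_m({\mathcal O}_p)^{\times}/GL_m({\mathcal O}_p),$ and since ${\rm ord}(\det B'[w])={\rm ord}(\det B')+\nu(\det w)$ it factors as $(Xt)^{{\rm ord}(\det B')}\sum_{w}(p^{-m}Xt)^{\nu(\det w)}.$ The latter is a Hecke-type series over elementary divisors whose product form is standard in each of the three local cases, and it carries most of the dependence on $X.$ Note that $\alpha_p(B',A)$ vanishes unless $\det A\in\det B'\,N_{K_p/{\bf Q}_p}(K_p^{\times}),$ so only those $B'$ with $\det B'$ in the norm class of $d_0$ survive.

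After this reduction, $P_m$ becomes $X^{e_pm-f_p[m/2]}$ times that lattice series times
$$S=\sum_{B'}\frac{G_p(B',p^{-m}X^2)}{\alpha_p(B')}\,(X^{-1}t)^{{\rm ord}(\det B')}.$$
To evaluate $S,$ the Corollary to Lemma 4.2.2 is decisive: writing each $B'$ in its Jordan form $1_{m-r}\bot pB_1$ (respectively $\Theta_{m-r}\bot\pi^{i_p}B_1$ when $K_p$ is ramified), the value $G_p(B',p^{-m}X^2)$ depends only on $m$ and the rank $r$ of the $p$-divisible part. I would therefore group the $B'$-sum according to $r,$ extract the corresponding $G_p$-factor, factor $\alpha_p(1_{m-r}\bot pB_1)$ through Lemma 4.2.1, and apply Proposition 4.2.4 with the constant weight $H\equiv1$ to reduce the degree-$m$ density to the degree-$r$ one at the cost of $\phi_{m-r}(\xi_p p^{-1})$ (respectively $\phi_{(m-r)/2}(p^{-2})$). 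Summing the resulting geometric series in $X^{-1}t$ over $r$ yields the denominators carrying $X^{-1}$ together with the prefactor $\phi_m(\xi_p p^{-1})^{-1}$ (respectively $\phi_{m/2}(p^{-2})^{-1}$), while the finite $G_p$-polynomials, which are products in $X^2,$ combine with the lattice series of the first paragraph to produce the denominators carrying $X.$

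Finally I would assemble the two families of factors and match the normalizing powers, namely $X^{e_pm-f_p[m/2]}$ and, in the ramified case, the shift $t^{mi_p/2}$ forced by the minimal determinant valuation of $\Theta_m.$ The one essentially arithmetic input is the determinant constraint on $B'$: for $m$ even with $K_p$ a field, the condition $\det B'\in d_0\,N_{K_p/{\bf Q}_p}(K_p^{\times})$ separates the $B'$ of each rank by their norm invariant, and combining this with the symmetry recorded just after the definition of $\lambda_{m,p}^*$ — that replacing $X$ by $X^{-1}$ multiplies $\lambda_{m,p}^*(d,X)$ by $\chi_{K_p}((-1)^{m/2}d)$ — produces the two-term symmetric expression of case (3), the second term being the first with $X$ replaced by $X^{-1}.$ In the unramified and split cases (1) and (2) there is a single norm class, and one obtains the single closed product directly.

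The step I expect to be the main obstacle is the bookkeeping in the ramified case: keeping the constants $e_p,f_p,i_p,$ the evenness constraints on the blocks $\Theta_r,$ and the $\chi_{K_p}$-dependence of the determinant class consistent through the rank-by-rank summation, so that the two geometric families recombine exactly into the stated form $\frac{t^{mi_p/2}}{2\phi_{m/2}(p^{-2})}\{\cdots+\chi_{K_p}((-1)^{m/2}d_0)\cdots\}.$ A secondary technical point is to justify the interchange of summations for ${\rm Re}(s)\gg0$ and to identify the elementary-divisor series with its product form in all three local cases.
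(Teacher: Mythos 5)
Your reduction in the first two paragraphs matches the paper's own route exactly: the passage from $\widetilde F_p^{(0)}$ to $G_p$ via the Corollary to Lemma 4.2.3, the interchange of summation and the identification of the inner sum with $\sum_W (p^{-m}Xt)^{\nu(\det W)}$ via Lemma 4.1.3(1) is Proposition 4.3.3, and the rank-by-rank decomposition of the remaining sum $S=K_m(d_0,X,t)$ using the Corollary to Lemma 4.2.2, Lemma 4.2.1 and Proposition 4.2.4 is Proposition 4.3.5. The gap is in what you do next. After that decomposition, $K_m$ is \emph{not} a geometric series in $X^{-1}t$: it is a finite sum over $r=0,\dots,m$ of terms
$$\frac{p^{-r^2}(tX^{-1})^r\prod_{i=0}^{r-1}\bigl(1-(-1)^m(-p)^iX^2\bigr)}{\phi_{m-r}(-p^{-1})}\,\zeta_r(d_0,tX^{-1}),$$
in which both the numerator (a product of $r$ factors in $X^2$) and the denominator $\prod_{i=1}^r(1+(-1)^ip^{-i}tX^{-1})$ coming from $\zeta_r$ change with $r$. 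Your claim that ``the finite $G_p$-polynomials \dots combine with the lattice series of the first paragraph to produce the denominators carrying $X$'' is precisely the nontrivial identity that has to be proved; as stated it is an assertion, not an argument, and it is the heart of the theorem.

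The paper closes this gap without evaluating the $r$-sum at all. It observes that the sum is a rational function of $t$ whose denominator divides $\prod_{i=1}^m(1+(-1)^ip^{-i}X^{-1}t)$ with numerator $L_m(d_0,X,t)$ of degree at most $m$ in $t$, so that $P_m$ has denominator dividing $\prod_{i=1}^m(1+(-1)^ip^{-i}X^{-1}t)\prod_{i=1}^m(1-p^{-2i}X^2t^2)$; it then invokes the functional equation of the Siegel series ($\widetilde F_p^{(0)}(B,-X^{-1})=\widetilde F_p^{(0)}(B,X)$ for $m$ even and $K_p$ unramified), which gives $P_m(d_0,-X^{-1},t)=P_m(d_0,X,t)$, forces the reduced denominator to be at most $\prod_{i=1}^m(1+(-1)^ip^{-i}X^{-1}t)\prod_{i=1}^m(1-(-1)^ip^{-i}Xt)$, and a degree count then shows the numerator is a constant, equal to $1$ at $t=0$. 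In the ramified case the same device is applied to the symmetrized pieces $P_m^{(0)},P_m^{(1)}$ via $P_m^{(0)}(X^{-1},t)=P_m^{(1)}(X,t)$. You do invoke this symmetry, but only to explain the two-term shape of case (3), not to do the work of collapsing the degree-$m$ numerator against the spurious denominator factors; for cases (1) and (2) you say the closed product is obtained ``directly,'' which it is not. To complete your proof you must either supply the $q$-binomial-type summation identity that evaluates the $r$-sum explicitly, or adopt the rationality-plus-functional-equation argument; without one of these the proof is incomplete at its most essential step.
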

   
\begin{thms}
 Let $m$ be odd, and $d_0 \in {\bf Z}_{p}^*.$ 
 
\noindent 
 {\rm (1)} Suppose that $K_p$ is unramified over ${\bf Q}_p.$   Then 
 $$P_m(d_0,X,t)={ 1 \over \phi_{m}(-p^{-1})\prod_{i=1}^{m} (1+t (-p)^{-i}X)(1+t(-p)^{-i}X^{-1})  }.$$
 
\noindent  
  {\rm (2)} Suppose that $K_p={\bf Q}_p \oplus {\bf Q}_p.$ Then 
  $$P_m(d_0,X,t)={ 1 \over \phi_{m}(p^{-1})\prod_{i=1}^{m} (1-t p^{-i}X)(1-tp^{-i}X^{-1})  }.$$

\noindent 
 {\rm (3)} Suppose that $K_p$ is ramified over ${\bf Q}_p.$   Then 
 $$P_m(d_0,X,t)={ t^{(m+1)i_p/2+\delta_{2p}} \over 2\phi_{(m-1)/2}(p^{-2})\prod_{i=1}^{(m+1)/2} (1-t p^{-2i+1}X)(1-tp^{-2i+1}X^{-1})  }.$$
\end{thms}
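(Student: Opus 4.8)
The plan is to run the same machine as in the proof of Theorem 4.3.1, exploiting two simplifications special to odd $m$. First, since $m$ is odd the functional equation $\widetilde F_p(T,X^{-1})=\widetilde F_p(T,X)$ holds, so $\lambda_{m,p}^*(d,X)$, and hence $P_m(d_0,X,t)$, are invariant under $X\mapsto X^{-1}$; this forces the symmetric shape of all three formulas and, in the ramified case, collapses the two-term character combination that occurred for even $m$ into a single term, so that no factor $\chi_{K_p}((-1)^{m/2}d_0)$ survives. I would fix this symmetry at the outset and use it to constrain the final answer.

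The computation itself proceeds from the Corollary to Lemma 4.2.3, which writes $\widetilde F_p^{(0)}(A,X)$ as a sum over classes $B'$ of $G_p(B',p^{-m}X^2)$ weighted by $\alpha_p(B',A)/\alpha_p(B')$ and a power of $X$ recording ${\rm ord}(\det A)$. I would insert this into the definition of $\lambda_{m,p}^*(\pi^i d_0,X)$, interchange the two summations (summing over $B'$ first), and evaluate the inner sum over $A$ by Lemma 4.1.3(1), which turns $\sum_A \alpha_p(B',A)/\alpha_p(A)$ into a count of super-objects whose only residual dependence is through ${\rm ord}(\det A)$, i.e.\ through the power of $t$. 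To organize the $B'$-sum I would use the Jordan splitting $B'\sim 1_r\bot pB_2$ (resp.\ $\Theta_r\bot\pi^{i_p}B_2$), feed in the explicit $G_p=\prod_{i=0}^{r-1}(1-(\xi_p p)^{m+i}Y)$ of the Corollary to Lemma 4.2.2, and peel off the unimodular block by Proposition 4.2.4, which contributes exactly the prefactor $\phi_m(\xi_p p^{-1})^{-1}$ (resp.\ $\phi_{(m-1)/2}(p^{-2})^{-1}$).

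Assembling everything amounts to summing the resulting geometric series in $t$ indexed by the determinant valuation $i\ge 0$: the factors $1-(\xi_p p)^{m+i}p^{-m}X^2$ coming from $G_p$, once paired with the $X\mapsto X^{-1}$ symmetry and the scaling powers of $X$ and $t$, close up into the products $\prod_{i=1}^m(1+t(-p)^{-i}X)(1+t(-p)^{-i}X^{-1})$ in the unramified case and into the corresponding product in the split case. Finally the identity $\lambda_{m,p}^*=u_p\lambda_{m,p}$ of Proposition 4.3.7 reconciles the normalizations.

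The hard part will be the ramified case (part (3)), exactly as flagged in the introduction. Here three things must be handled with care: the $\Theta_r$-blocks replace the $1_r$-blocks so that all rank bookkeeping runs in steps of two and $\phi_{(m-1)/2}(p^{-2})$ rather than $\phi_m$ appears; the parameter $i_p$ (which drops to $0$ only when $p=2$, $f_2=2$), together with the normalization $e_p=f_p-\delta_{2,p}$, shifts the minimal determinant valuation and produces the prefactor $t^{(m+1)i_p/2+\delta_{2p}}$ that is absent in the unramified case; and the $p=2$ anomalies in Lemma 4.1.9 must be tracked throughout. Verifying that these shifts combine to give precisely the exponent $(m+1)i_p/2+\delta_{2p}$, and that the single surviving term has denominator $\prod_{i=1}^{(m+1)/2}(1-tp^{-2i+1}X)(1-tp^{-2i+1}X^{-1})$, is the delicate bookkeeping that the parallel even-case argument does not already cover.
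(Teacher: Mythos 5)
Your proposal follows the paper's own route: the paper proves Theorem 4.3.2 by "the same argument" as Theorem 4.3.1, i.e.\ Proposition 4.3.3 (the interchange of sums via the Corollary to Lemma 4.2.3 and Lemma 4.1.3(1)), Proposition 4.3.5 with the Corollary to Proposition 4.3.4 to evaluate $K_m$, and the functional equation $\widetilde F_p(T,X^{-1})=\widetilde F_p(T,X)$ for odd $m$ to pin down the reduced denominator and the constant numerator — exactly the chain you describe, including the $\kappa_p(t)$ and $i_p$ bookkeeping that produces the prefactor $t^{(m+1)i_p/2+\delta_{2p}}$ in the ramified case. The only superfluous step is the appeal to Proposition 4.3.7, which relates $\lambda^*_{m,p}$ to $\lambda_{m,p}$ and is not needed for $P_m(d_0,X,t)$ itself, since $P_m$ is defined directly in terms of $\lambda^*_{m,p}$.
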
  
   
To prove Theorems 4.3.1 and 4.3.2, put 
$$K_{m}(d_0,X,t)=\sum_{B' \in \widetilde{\mathcal F}_{r,p}(d_0)}{G_p(B',p^{-m}X^2) \over \alpha_p(B')}(tX^{-1})^{{\rm ord}(\det B')}.$$

\begin{props}
Let $m$ and $d_0$ be as above.  Then we have
$$P_{m}(d_0,X,t)=X^{me_p-[m/2]f_p} K_{m}(d_0,X,t)$$
$$\times \listthree({\prod_{i=1}^{m}(1-t^2X^2p^{2i-2-2m})^{-1}};{ \ {\rm if} \ K_p/{\bf Q}_p \ {\rm is \ unramified }};{\prod_{i=1}^{m}(1-tXp^{i-1-m})^{-2}};{ \ {\rm if} \ K_p={\bf Q}_p \oplus {\bf Q}_p };{\prod_{i=1}^{m}(1-tXp^{i-1-m})^{-1}};{ \ {\rm if} \ K_p/{\bf Q}_p \ {\rm is \ ramified }.})$$
\end{props}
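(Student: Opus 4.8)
The plan is to expand $P_m(d_0,X,t)$ into a double sum and then interchange the order of summation. Since $d_0\in{\bf Z}_p^*$ and $\pi$ is a uniformizer of ${\bf Q}_p$, every $A\in\widetilde{\rm Her}_m(\pi^id_0N_{K_p/{\bf Q}_p}({\mathcal O}_p^*),{\mathcal O}_p)$ satisfies ${\rm ord}(\det A)=i$, so the definition of $\lambda_{m,p}^*$ gives
$$P_m(d_0,X,t)=\sum_{A\in\widetilde{\mathcal F}_{m,p}(d_0)/GL_m({\mathcal O}_p)}\frac{\widetilde F_p^{(0)}(A,X)}{\alpha_p(A)}\,t^{{\rm ord}(\det A)}.$$
Into each term I would insert the expansion of $\widetilde F_p^{(0)}(A,X)$ provided by the Corollary to Lemma 4.2.3, namely $X^{me_p-f_p[m/2]}X^{{\rm ord}(\det A)}$ times $\sum_{B'}X^{-2{\rm ord}(\det B')}\frac{\alpha_p(B',A)}{\alpha_p(B')}G_p(B',p^{-m}X^2)$, the sum running over $B'\in\widetilde{\rm Her}_m({\mathcal O}_p)/GL_m({\mathcal O}_p)$.

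Next I would move the sum over $B'$ to the outside and evaluate the inner sum over $A$ by means of Lemma 4.1.3 (1), which turns $\alpha_p(B',A)/\alpha_p(A)$ into $\#(\Omega(B',A)/GL_m({\mathcal O}_p))\,p^{-m({\rm ord}(\det A)-{\rm ord}(\det B'))}$. The point is that $\bigsqcup_A\Omega(B',A)/GL_m({\mathcal O}_p)$ is in bijection with the cosets $w\in M_m({\mathcal O}_p)^\times/GL_m({\mathcal O}_p)$, with $A$ the class of $B'[w]$ and ${\rm ord}(\det A)={\rm ord}(\det B')+\nu(\det w)$. For this I must check that, whenever $B'$ lies in $\widetilde{\mathcal F}_{m,p}(d_0)$, so does $B'[w]$ for every $w\in M_m({\mathcal O}_p)^\times$: semi-integrality is preserved because $w$ has entries in ${\mathcal O}_p$, and for the determinant one verifies $N_{K_p/{\bf Q}_p}(\det w)\in\pi^{\nu(\det w)}N_{K_p/{\bf Q}_p}({\mathcal O}_p^*)$, so that $\det(B'[w])=N_{K_p/{\bf Q}_p}(\det w)\det B'$ remains in the same $d_0$-family. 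The inner sum then collapses to $(tX)^{{\rm ord}(\det B')}S$, where
$$S=\sum_{w\in M_m({\mathcal O}_p)^\times/GL_m({\mathcal O}_p)}(tXp^{-m})^{\nu(\det w)}$$
depends on neither $B'$ nor $d_0$; comparing the remaining sum over $B'$ with the definition of $K_m(d_0,X,t)$ yields $P_m(d_0,X,t)=X^{me_p-f_p[m/2]}K_m(d_0,X,t)\,S$.

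It then remains to compute $S$, and this is exactly what produces the three displayed factors. When $K_p$ is a field I would use the Hermite normal form over the discrete valuation ring ${\mathcal O}_p$, whose residue field has $p^2$ or $p$ elements according as $K_p/{\bf Q}_p$ is unramified or ramified: counting representatives with prescribed elementary divisors $\varpi^{e_1},\dots,\varpi^{e_m}$ and using $\nu(\det w)=\nu(\varpi)\sum_ie_i$ gives $S=\prod_{i=0}^{m-1}(1-q^iu^{\nu(\varpi)})^{-1}$ with $u=tXp^{-m}$, where $q$ is the residue cardinality. Since $\nu(\varpi)=2$ in the unramified case and $\nu(\varpi)=1$ in the ramified case, substituting $u=tXp^{-m}$ yields $\prod_{i=1}^m(1-t^2X^2p^{2i-2-2m})^{-1}$ and $\prod_{i=1}^m(1-tXp^{i-1-m})^{-1}$ respectively. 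When $K_p={\bf Q}_p\oplus{\bf Q}_p$ the coset space factors as two copies of $GL_m({\bf Z}_p)\backslash M_m({\bf Z}_p)^\times$ and $\nu(\det w)={\rm ord}_p(\det w_1)+{\rm ord}_p(\det w_2)$, so $S$ is the square of $\prod_{i=1}^m(1-tXp^{i-1-m})^{-1}$, giving the remaining factor.

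I expect the main obstacle to be the determinant bookkeeping in the second step: showing that $B'[w]$ never leaves the $d_0$-family, so that the sum over $w$ is genuinely unconstrained and independent of $B'$, rests on the description of the norm subgroup $N_{K_p/{\bf Q}_p}({\mathcal O}_p^*)\subseteq{\bf Z}_p^*$ and on the parity of $\nu$. The ramified case is the delicate one, since there $N_{K_p/{\bf Q}_p}({\mathcal O}_p^*)$ is a proper subgroup of index two in ${\bf Z}_p^*$ and $\pi$ itself must be recognized as a norm.
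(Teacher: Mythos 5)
Your proposal is correct and follows essentially the same route as the paper: expand $\widetilde F_p^{(0)}$ via the Corollary to Lemma 4.2.3, interchange the sums, evaluate the inner sum over $A$ by Lemma 4.1.3 (1) together with the B\"ocherer--Sato-type identification of $\bigsqcup_A \Omega(B',A)/GL_m({\mathcal O}_p)$ with $M_m({\mathcal O}_p)^{\times}/GL_m({\mathcal O}_p)$, and then compute $\sum_W (tXp^{-m})^{\nu(\det W)}$ by elementary divisors in the three cases. The paper compresses the middle step into a citation of [B-S, Theorem 5]; your explicit check that $B'[w]$ stays in the $d_0$-family (via $\det(B'[w])=N_{K_p/{\bf Q}_p}(\det w)\det B'$ and $\pi=N_{K_p/{\bf Q}_p}(\varpi)$ in the ramified case) is exactly the point that makes the inner sum unconstrained, so nothing is missing.
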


\begin{proof} We note that $B'$ belongs to ${\widetilde{\rm Her}}_{m,p}(d_0)$ if $B$ belongs to ${\widetilde{\rm Her}}_{m-l,p}(d_0)$ and $\alpha_p(B',B) \not=0.$ Hence  by Corollary to Lemma 4.2.3 we have
$$P_{m}(d_0,X,t)$$
$$=X^{me_p-[m/2]f_p}  \sum_{B \in \widetilde{\mathcal F}_{m,p}(d_0)}{1 \over \alpha_p(B)}\sum_{B'} {G_p(B',p^{-m}X^2) X^{-{\rm ord}(\det B')}\alpha_p(B',B) \over \alpha_p(B')}$$
$$\times X^{{\rm ord}(\det B)-{\rm ord}(\det B')}t^{{\rm ord}(\det B)}$$
$$=X^{me_p-[m/2]f_p} \sum_{B' \in \widetilde{\mathcal F}_{m,p}(d_0)}{G_p(B',p^{-m}X^2) \over \alpha_p(B')}(tX^{-1})^{{\rm ord}(\det B')}$$
$$\times \sum_{B \in \widetilde{\mathcal F}_{m,p}(d_0)} {\alpha_p(B',B) \over \alpha_p(B)} (tX)^{{\rm ord}(\det B)-{\rm ord}(\det B')}.$$
Hence  by using the same argument as in the proof of [\cite{B-S}, Theorem 5], and by (1) of Lemma 4.1.3, we have 
$$\sum_{B \in \widetilde{\mathcal F}_{m,p}(d_0)}{\alpha_p(B',B) \over \alpha_p(B)} (tX)^{{\rm ord}(\det B)-{\rm ord}(\det B')} $$
$$=\sum_{W \in M_m({\mathcal O}_p)^{\times}/GL_m({\mathcal O}_p)} (tXp^{-m})^{\nu(\det W)}$$
$$=\listthree({\prod_{i=1}^{m}(1-t^2X^2p^{2i-2-2m})^{-1}};{ \ {\rm if} \ K_p/{\bf Q}_p \ {\rm is \ unramified }};{\prod_{i=1}^{m}(1-tXp^{i-1-m})^{-2}};{ \ {\rm if} \ K_p={\bf Q}_p \oplus {\bf Q}_p };{\prod_{i=1}^{m}(1-tXp^{i-1-m})^{-1}};{ \ {\rm if} \ K_p/{\bf Q}_p \ {\rm is \ ramified }.})$$
Thus the assertion holds.
\end{proof}

\bigskip

In order to prove Theorems 4.3.1 and 4.3.2,  we introduce some  notation.   For a positive integer $r$ and $d_0 \in {\bf Z}_p^{\times}$ let
\[
\zeta_{m}(d_{0},t)=  \sum_{T \in {\mathcal F}_{m,p,*}(d_0)}
\frac{1}{\alpha_{p}(T)}t^{{\rm ord}(\det T)}.
\]
We make the convention that 
 $\zeta_{0}(d_0,t) = 1$  or $0$ according as  $d_0 \in {\bf Z}_p^*$ or not. 
To obtain an explicit formula of $\zeta_{m}(d_{0},t)$ let $Z_m(u,d)$ be the integral  defined as 
$$Z_{m,*}(u,d)=\int_{{\mathcal F}_{m,p,*}(d_0)} |\det x|^{s-m} |dx|,$$
 where $u=p^{-s},$ and $|dx|$ is the measure on ${\rm Her}_m(K_p)$ so that the volume of ${\rm Her}_m({\mathcal O}_p)$ is $1.$ Then by Theorem 4.2 of \cite{Sa} we obtain:
 
 \bigskip
 
 \begin{props}
 Let $d_0 \in {\bf Z}_p^*.$ 
 
 \noindent
  {\rm (1)} Suppose that $K_p$ is unramified over ${\bf Q}_p.$ Then
 $$Z_{m,*}(u,d_0)={(p^{-1},p^{-2})_{[(m+1)/2]}(-p^{-2},p^{-2})_{[m/2]} \over \prod_{i=1}^{m}(1-(-1)^{m+i}p^{i-1}u)}.$$
   
    {\rm (2)} Suppose that $K_p={\bf Q}_p \oplus {\bf Q}_p.$    Then 
 $$Z_{m,*}(u,d_0)={\phi_m(p^{-1}) \over \prod_{i=1}^{m}(1-p^{i-1}u)}.$$
 {\rm (3)} Suppose that $K_p$ is ramified over ${\bf Q}_p.$  \\
 {\rm (3.1)} Let $p\not=2.$ Then
 $$Z_{m,*}(u,d_0)={1 \over 2} (p^{-1},p^{-2})_{[(m+1)/2]}$$
 $$\times  \left\{\begin{array}{ll}
   { 1  \over  \prod_{i=1}^{(m+1)/2} (1-p^{2i-2}u)} & \ {\rm if} \ m \ {\rm is \ odd,} \\
 \Bigl( { 1 \over \prod_{i=1}^{m/2}(1-p^{2i-1}u)} +{ \chi_{K_p}((-1)^{m/2}d_0)p^{-m/2} \over  \prod_{i=1}^{m/2}(1-p^{2i-2}u)}\Bigr) & \ {\rm if} \ m \ {\rm is \ even.} 
 \end{array}
 \right.$$
 {\rm (3.2)} Let $p=2$ and $f_2=2.$ Then
 $$Z_{m,*}(u,d_0)={1 \over 2} (p^{-1},p^{-2})_{[(m+1)/2]}$$
 $$\times  \left\{\begin{array}{ll}
   { u^{(m+1)/2}  \over  \prod_{i=1}^{(m+1)/2} (1-p^{2i-2}u)} & \ {\rm if} \ m \ {\rm is \ odd,} \\
 u^{m/2}p^{-m/2} \Bigl({ 1 \over   \prod_{i=1}^{m/2}(1-p^{2i-1}u)} +{ \chi_{K_p}((-1)^{m/2}d_0)p^{-m/2} \over  \prod_{i=1}^{m/2}(1-p^{2i-2}u)}\Bigr) & \ {\rm if} \ m \ {\rm is \ even.} 
 \end{array}
 \right.$$
 {\rm (3.3)} Let $p=2$ and $f_2=3.$ Then
 $$Z_{m,*}(u,d_0)={1 \over 2} (p^{-1},p^{-2})_{[(m+1)/2]}$$
 $$\times  \left\{\begin{array}{ll}
   { u  \over  \prod_{i=1}^{(m+1)/2} (1-p^{2i-2}u)} & \ {\rm if} \ m \ {\rm is \ odd,} \\
 p^{-m} \Bigl({ 1 \over  \prod_{i=1}^{m/2}(1-p^{2i-1}u)} +{ \chi_{K_p}((-1)^{m/2}d_0)p^{-m/2} \over  \prod_{i=1}^{m/2}(1-p^{2i-2}u)}\Bigr) & \ {\rm if} \ m \ {\rm is \ even.} 
 \end{array}
 \right.$$
 \end{props}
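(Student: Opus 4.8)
The plan is to recognize $Z_{m,*}(u,d_0)$ as a determinant-coset piece of the $p$-adic local zeta function of the prehomogeneous vector space $({\rm Res}_{K_p/{\bf Q}_p}(GL_m),\,{\rm Her}_m(K_p))$, on which $GL_m(K_p)$ acts by $x \mapsto x[g]=g^*xg$ with relative invariant $\det x$ transforming under the norm character $g \mapsto N_{K_p/{\bf Q}_p}(\det x)$. Since $\det x \in {\bf Q}_p^{\times}$ for non-degenerate $x \in {\rm Her}_m(K_p)$, the rational orbits are classified by the class of $\det x$ in ${\bf Q}_p^{\times}/N_{K_p/{\bf Q}_p}(K_p^{\times})$, and ${\mathcal F}_{m,p,*}(d_0)$ selects exactly those $x$ (subject to the diagonal $*$-condition) with $\det x \in d_0\,\pi^{{\bf Z}_{\ge 0}}\,N_{K_p/{\bf Q}_p}({\mathcal O}_p^*)$; the variable $u=p^{-s}$ then plays the role of a generating variable in ${\rm ord}(\det x)$.

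First I would reduce to character-twisted full integrals. The finite group ${\bf Z}_p^{*}/N_{K_p/{\bf Q}_p}({\mathcal O}_p^{*})$ is trivial when $K_p$ is unramified or $K_p={\bf Q}_p \oplus {\bf Q}_p$, and has order $2$ when $K_p/{\bf Q}_p$ is ramified, its nontrivial character being (the restriction of) $\chi_{K_p}$. Because ${\mathcal F}_{m,p,*}(d_0)$ depends on $d_0$ only through its class in this group, finite Fourier inversion expresses $Z_{m,*}(u,d_0)$ as the average over characters $\omega$ of $\overline{\omega(d_0)}\int \omega(\det x)|\det x|^{s-m}\,|dx|$, taken over the relevant $*$-lattice. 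This yields a single summand in the unramified and split cases, and exactly two summands weighted by $1$ and $\chi_{K_p}((-1)^{m/2}d_0)$ in the ramified case, matching the shape of (3); the factor $(-1)^{m/2}$ arises from comparing the norm class of $\det x$ with that of the split-off hyperbolic block $\Theta_m$, which is meaningful only for even $m$, explaining why no $\chi_{K_p}$-term survives when $m$ is odd.

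Each character-twisted full integral is precisely the local zeta function evaluated in Sato \cite{Sa}, Theorem 4.2; substituting its value and simplifying produces the displayed rational functions. The Pochhammer prefactors $(p^{-1},p^{-2})_{[(m+1)/2]}(-p^{-2},p^{-2})_{[m/2]}$ and $\phi_m(p^{-1})$ are the volume constants of the type $\int_{{\rm Her}_m({\mathcal O}_p)^{\times}}|dx|$ supplied by that theorem, while the denominators, products of the form $\prod_i(1-c_i p^{j}u)$, encode the rationality of the zeta function in $u$; the alternating signs $(-1)^{m+i}$ in (1) reflect the Hasse invariants of the anisotropic unramified Hermitian spaces.

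The genuinely delicate step, and the one I expect to be the main obstacle, is the ramified dyadic bookkeeping in (3.2) and (3.3). There the domain ${\rm Her}_{m,*}({\mathcal O}_p)$ is a proper sublattice of ${\rm Her}_m({\mathcal O}_p)$, with diagonal entries constrained to $\pi{\bf Z}_p$ and, in the $f_2=2$ case, off-diagonal entries pushed into $\varpi{\mathcal O}_p$, while the semi-integral normalization $\widetilde{\rm Her}_m = p^{e_p}\widehat{\rm Her}_m$ with $e_p=f_p-\delta_{2,p}$ shifts both the measure and ${\rm ord}(\det x)$. I would track these two rescalings explicitly: the change of lattice multiplies $|dx|$ by a fixed power of $p$, and the $e_p$-shift multiplies $\det x$ by $\pi^{e_p m}$, which is exactly what converts the clean formula of (3.1) into the extra prefactors $u^{(m+1)/2}$, $u^{m/2}p^{-m/2}$, $u$, and $p^{-m}$ appearing in (3.2) and (3.3). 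Reconciling these with Sato's normalization, rather than any conceptual difficulty, is where the care is required.
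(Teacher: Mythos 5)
Your overall route is the paper's: for $K_p$ unramified, split, or ramified with $p\neq 2$, the integral $Z_{m,*}(u,d_0)$ is literally the orbital zeta function $Z_m(u,d_0)$ computed in \cite{Sa}, Theorem 4.2 (note: the reference is H.~Saito, not F.~Sato), and the paper's proof of (1), (2), (3.1) is exactly this citation. Your framing via the prehomogeneous space ${\rm Her}_m(K_p)$ and averaging over characters of ${\bf Z}_p^*/N_{K_p/{\bf Q}_p}({\mathcal O}_p^*)$ is a fair description of where the two summands weighted by $1$ and $\chi_{K_p}((-1)^{m/2}d_0)$ come from, and adds nothing problematic.

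The gap is in the step you yourself flag as the crux, namely (3.2) and (3.3). You propose to obtain them from (3.1) by tracking ``two rescalings'': a measure factor from the change of lattice and a determinant factor from the $e_p$-shift. This cannot work. The domains ${\rm Her}_m({\mathcal O}_2)_1$ and ${\rm Her}_m(\varpi{\mathcal O}_2)_1$ are not homotheties of ${\rm Her}_m({\mathcal O}_p)$ (the only Hermitian-compatible scalings are $A\mapsto \lambda A\bar\lambda=N(\lambda)A$, which shrink the off-diagonal entries by $\pi$, not by $\varpi$), and the integrand $|\det x|^{s-m}$ is not constant on cosets of the sublattice, so the integral over the sublattice is not a constant times the integral over the full lattice. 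You can see the failure directly in the answer: a homothety by $c$ with $\nu_{K_p}(c)=a$ multiplies $Z_{m,*}$ by $p^{-am^2}u^{am}$, a factor of uniform shape in $m$, whereas the actual ratio of (3.2) to (3.1) is $u^{(m+1)/2}$ for $m$ odd and $u^{m/2}p^{-m/2}$ for $m$ even --- a parity-dependent discrepancy that no rescaling produces. What is actually required (and what the paper does) is to rerun Saito's orbit-by-orbit computation from the proof of Theorem 4.2 of \cite{Sa} on these dyadic sublattices, since that case is simply not covered by the statement of his theorem; your plan as written would terminate in an incorrect formula.
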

 
 \begin{proof} First suppose that $K_p$ is unramified over ${\bf Q}_p,$ $K_p={\bf Q}_p \oplus {\bf Q}_p,$ or $K_p$ is ramified over ${\bf Q}_p$ and $p \not=2.$ Then $Z_{m,*}(u,d_0)$ coincides with $Z_m(u,d_0)$ in [\cite{Sa}, Theorem 4.2]. Hence the assertion follows from (1) and (2) and the former half of (3) of [loc. cit].  Next suppose that $p=2$ and $f_2=2.$ Then $Z_{m,*}(u,d_0)$ is not treated in  [loc. cit],  but we can prove the assertion (3.2) using the same argument as in the proof of the latter half of (3) of [loc. cit]. Similarly we can prove (3.3) by using the same argument as in the proof of the former half of (3) of [loc. cit].
 
\end{proof} 

\begin{xcor}
Let $d_0 \in {\bf Z}_p^*.$

\noindent
{\rm (1)} Suppose that $K_p$ is unramified over ${\bf Q}_p.$   Then 
$$\zeta_m(d_0,t)={1 \over \phi_m(-p^{-1})}{1 \over \prod_{i=1}^m (1+(-1)^ip^{-i}t)}.$$

\noindent
{\rm (2)} Suppose that $K_p={\bf Q}_p \oplus {\bf Q}_p.$    Then 
$$\zeta_m(d_0,t)={1 \over \phi_m(p^{-1})}{1 \over \prod_{i=1}^m (1-p^{-i}t)}.$$

\noindent
{\rm (3)} Suppose that $K_p$ is ramified over ${\bf Q}_p.$   

\noindent
{\rm (3.1)} Let $m$ be even. Then 
$$\zeta_m(d_0,t)={ p^{m(m+1)f_p/2-m^2\delta_{2,p}} {\kappa_p}(t) \over 2\phi_{m/2}(p^{-2})}$$$$ \times \{ {1   \over \prod_{i=1}^{m/2} (1-p^{-2i+1} t)} + {\chi_{K_p}((-1)^{m/2}d_0) p^{-i_pm/2} \over \prod_{i=1}^{m/2} (1-p^{-2i} t)}\},$$
where $i_p=0,$ or $1$ according as $p=2$ and $f_p=2$, or not, and 
$$\kappa_p(t)=\left\{\begin{array} {ll}
1    & \ {\rm if} \ p \not=2 \\
t^{m/2}p^{-m(m+1)/2} & \ {\rm if} \ p=2 \ {\rm and} \ f_2=2 \\
p^{-m}  & \ {\rm if} \ p=2 \ {\rm and} \ f_2=3 
\end{array}
\right.$$

\noindent
{\rm (3.2)} Let $m$ be odd.  Then
$$\zeta_m(d_0,t)={p^{m(m+1)f_p/2-m^2\delta_{2,p}} \kappa_p(t) \over 2\phi_{(m-1)/2}(p^{-2})}{1 \over \prod_{i=1}^{(m+1)/2} (1-p^{-2i+1}t)},$$
where 
$$\kappa_p(t)=\left\{\begin{array} {ll}
1    & \ {\rm if} \ p \not=2 \\
t^{(m+1)/2}p^{-m(m+1)/2} & \ {\rm if} \ p=2 \ {\rm and} \ f_2=2 \\
tp^{-m}   & \ {\rm if} \ p=2 \ {\rm and} \ f_2=3 
\end{array}
\right.$$

\end{xcor}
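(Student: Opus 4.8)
The plan is to deduce the Corollary from Proposition 4.3.5 by identifying $\zeta_m(d_0,t)$ with the local zeta integral $Z_{m,*}(u,d_0)$ under a single change of variables, so that each of the three cases of the Corollary is read off from the corresponding case of the Proposition. The bridge is an orbit decomposition of the integral. Since $\det(T[g])=N_{K_p/{\bf Q}_p}(\det g)\det T$ and $N_{K_p/{\bf Q}_p}(\det g)\in{\bf Z}_p^{*}$ for $g\in GL_m({\mathcal O}_p)$, the valuation ${\rm ord}(\det x)$ is constant on every $GL_m({\mathcal O}_p)$-orbit, and ${\mathcal F}_{m,p,*}(d_0)$ breaks up (up to a measure-zero set) into the orbits $O_T=\{x\mid x\sim_{GL_m({\mathcal O}_p)}T\}$. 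Reading both the sum defining $\zeta_m$ and the decomposition below as sums over $GL_m({\mathcal O}_p)$-classes (which is forced, since each orbit is an open set of positive measure), I get
\[
Z_{m,*}(u,d_0)=\sum_{T}|\det T|_p^{\,s-m}\,{\rm vol}(O_T).
\]

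Next I would compute ${\rm vol}(O_T)$ directly from the definition of $\upsilon_p$. Reducing modulo $p^a$ and applying the orbit--stabilizer count inside ${\rm Her}_m({\mathcal O}_p)/p^a$, with $\#({\rm Her}_m({\mathcal O}_p)/p^a)=p^{am^2}$ and $\#\Upsilon_a(T)=p^{am^2}\upsilon_p(T)$, one obtains
\[
{\rm vol}(O_T)=\frac{{\rm vol}(GL_m({\mathcal O}_p))}{\upsilon_p(T)}=\frac{\phi_{m,p}(p^{-1})}{\upsilon_p(T)},
\]
the last equality because a residue-field (or, in the split case, product) computation gives ${\rm vol}(GL_m({\mathcal O}_p))=\prod_{i=1}^m(1-|k|^{-i})=\phi_{m,p}(p^{-1})$, the residue field $k$ having $p^2$, $p\oplus p$, resp. $p$ elements in the three cases. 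Converting $\upsilon_p$ to $\alpha_p$ by Lemma 4.1.1 and using $|\det T|_p^{\,s-m}=(p^m u)^{{\rm ord}(\det T)}$ yields the master identity
\[
\zeta_m(d_0,t)=\frac{\epsilon_p}{\phi_{m,p}(p^{-1})}\,Z_{m,*}(tp^{-m},d_0),
\]
where $\epsilon_p=1$ unless $K_p/{\bf Q}_p$ is ramified, in which case $\epsilon_p=p^{m(m+1)f_p/2-m^2\delta_{2,p}}$ is exactly the prefactor supplied by Lemma 4.1.1.

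It then remains to substitute $u=tp^{-m}$ into the explicit formulas of Proposition 4.3.5 and simplify. Reversing the order of each product turns the denominators into those of the Corollary verbatim (e.g. in the unramified case $\prod_{i=1}^m(1-(-1)^{m+i}p^{i-1}u)$ becomes $\prod_{i=1}^m(1+(-1)^i p^{-i}t)$ under $i\mapsto m+1-i$), while the leading constants collapse by telescoping identities such as
\[
\phi_{m/2}(p^{-2})\,(p^{-1},p^{-2})_{m/2}=\phi_m(p^{-1})\qquad(\text{ramified, }m\text{ even}),
\]
and its unramified analogue $\phi_m(-p^{-1})\,(p^{-1},p^{-2})_{[(m+1)/2]}(-p^{-2},p^{-2})_{[m/2]}=\phi_m(p^{-2})$. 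The surviving powers of $u$ in cases (3.2) and (3.3) of the Proposition become precisely the factor $\kappa_p(t)$ after the substitution, and the $\epsilon_p$ above is exactly the $p^{m(m+1)f_p/2-m^2\delta_{2,p}}$ in front of the Corollary's ramified formulas.

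The main obstacle I anticipate is justifying the orbit decomposition itself in the ramified and, above all, the dyadic ($p=2$, $f_2\in\{2,3\}$) cases. There one must verify that ${\mathcal F}_{m,p,*}(d_0)$ --- cut out by the semi-integral condition $\widetilde{\rm Her}_m({\mathcal O}_p)=p^{e_p}\widehat{\rm Her}_m({\mathcal O}_p)$ together with the diagonal condition defining ${\rm Her}_{m,*}({\mathcal O}_p)$ --- is genuinely a union of full $GL_m({\mathcal O}_p)$-orbits, so that the volume formula may be applied orbit by orbit, and one must track the measure normalization and the $\upsilon_p\leftrightarrow\alpha_p$ conversion through the shifts encoded by $i_p$ and $e_p$. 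Once this constant bookkeeping is pinned down, the remaining computation is mechanical and reduces entirely to Proposition 4.3.5 and Lemma 4.1.1.
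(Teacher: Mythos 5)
Your proposal is correct and follows essentially the same route as the paper: the paper's proof consists precisely of the identity $\zeta_m(d_0,t)=\epsilon_p\,Z_{m,*}(p^{-m}t,d_0)/\phi_{m,p}(p^{-1})$ (asserted there as ``a simple computation,'' using Lemma 3.1 for the ramified factor $p^{m(m+1)f_p/2-m^2\delta_{2,p}}$), followed by substitution into the explicit formulas for $Z_{m,*}$; your orbit-volume argument just fills in that computation. The only blemishes are reference slips (the $Z_{m,*}$ formulas are Proposition 4.3.4, not 4.3.5, and the $\upsilon_p$--$\alpha_p$ conversion is Lemma 3.1, not Lemma 4.1.1).
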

\begin{proof}
First suppose that $K_p$ is unramified over ${\bf Q}_p.$ Then by a simple computation  we have 
$$\zeta_m(d_0,t)={Z_{m,*}(p^{-m}t,d_0) \over \phi_m(p^{-2})}.$$
Next@suppose that $K_p={\bf Q}_p \oplus {\bf Q}_p.$ Then similarly to above 
$$\zeta_m(d_0,t)={Z_{m,*}(p^{-m}t,d_0) \over \phi_m(p^{-1})^2}.$$
Finally suppose that $K_p$ is ramified over ${\bf Q}_p.$
Then by a simple computation and Lemma 3.1
$$\zeta_m(d_0,t)={p^{m(m+1)f_p/2-m^2\delta_{2,p}}Z_{m,*}(p^{-m}t,d_0) \over \phi_m(p^{-1})}.$$
  Thus the assertions follow from Proposition 4.3.4.

\end{proof}

\begin{props}
Let $d_0 \in {\bf Z}_p^*.$ 

\noindent
{\rm (1)} Suppose that $K_p$ is unramified over ${\bf Q}_p.$   Then
$$K_{m}(d_0,X,t)$$
$$=\sum_{r=0}^{m}{p^{-r^2}(tX^{-1})^r \prod_{i=0}^{r-1}(1-(-1)^m(-p)^{i}X^2) \over \phi_{m-r}(-p^{-1})}\zeta_{r}(d_0,tX^{-1}).$$

\noindent
{\rm (2)} Suppose that $K_p={\bf Q}_p \oplus {\bf Q}_p.$    Then 
$$K_{m}(d_0,X,t)$$
$$=\sum_{r=0}^{m}{p^{-r^2}(tX^{-1})^r \prod_{i=0}^{r-1}(1-p^{i}X^2) \over \phi_{m-r}(p^{-1})}\zeta_{r}(d_0,tX^{-1}).$$

\noindent
{\rm (3)} Suppose that $K_p$ is ramified over ${\bf Q}_p.$ Then 
$$K_{m}(d_0,X,t)$$
$$=\sum_{r=0}^{m/2}{p^{-4i_pr^2}(tX^{-1})^{(m/2+r)i_p} \prod_{i=0}^{r-1}(1-p^{2i}X^2) \over \phi_{(m-2r)/2}(p^{-2})}\zeta_{2r}((-1)^{m/2-r}d_0,tX^{-1})$$
if $m$ is even, and 
$$K_{m}(d_0,X,t)$$
$$=\sum_{r=0}^{(m-1)/2}{p^{-(2r+1)^2i_p}(tX^{-1})^{((m+1)/2+r)i_p} \prod_{i=0}^{r-1}(1-p^{2i+1}X^2) \over \phi_{(m-2r-1)/2}(p^{-2})}\zeta_{2r+1}((-1)^{(m-2r-1)/2}d_0,tX^{-1})$$
if $m$ is odd.

\end{props}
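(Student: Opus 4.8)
The plan is to compute $K_m(d_0,X,t)$ by breaking the sum over $GL_m({\mathcal O}_p)$-classes in $\widetilde{\mathcal F}_{m,p}(d_0)$ into strata indexed by the Jordan-type invariant of each class. Recalling the normal form of Jacobowitz \cite{J} quoted above, every $B'$ is equivalent to $1_{m-r}\bot pB_2$ with $B_2\in{\rm Her}_{r,*}({\mathcal O}_p)$ when $K_p$ is unramified or split, and to $\Theta_{m-2r}\bot\pi^{i_p}B_2$ with $B_2\in{\rm Her}_{2r,*}({\mathcal O}_p)$ (resp. $\Theta_{m-2r-1}\bot\pi^{i_p}B_2$ with $B_2\in{\rm Her}_{2r+1,*}({\mathcal O}_p)$ for $m$ odd) when $K_p$ is ramified. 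The dimension of the scaled block --- equal to $r$, $2r$, or $2r+1$ in the three cases --- together with the class of $B_2$ is an invariant of $B'$, and $B_2\mapsto B'$ is a bijection from classes of $B_2$ (subject to a determinant condition identified below) onto the corresponding stratum. First I would therefore write $K_m$ as a sum over the admissible $r$ of an inner sum running over the classes of $B_2$.

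Second, I would extract from each stratum the three factors depending only on $r$. The polynomial $G_p(B',p^{-m}X^2)$ is given by the Corollary to Lemma 4.2.2 and, after the substitution $Y=p^{-m}X^2$, simplifies to $\prod_{i=0}^{r-1}(1-(-1)^m(-p)^iX^2)$, $\prod_{i=0}^{r-1}(1-p^iX^2)$, or $\prod_{i=0}^{r-1}(1-p^{2i}X^2)$ (resp. $\prod_{i=0}^{r-1}(1-p^{2i+1}X^2)$) in the unramified, split, and ramified even (resp. odd) cases; in each case it is independent of $B_2$ and leaves the inner sum. Lemma 4.2.1 factors the denominator as $\phi_{m-r}(\xi_pp^{-1})\,\alpha_p(pB_2)$, resp. $\phi_{(m-2r)/2}(p^{-2})\,\alpha_p(\pi^{i_p}B_2)$, pulling out the cyclotomic factor. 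Finally Lemma 4.1.5 gives $\alpha_p(pB_2)=p^{r^2}\alpha_p(B_2)$ and $\alpha_p(\pi^{i_p}B_2)=p^{i_p(\dim B_2)^2}\alpha_p(B_2)$, which produces the prefactors $p^{-r^2}$, $p^{-4i_pr^2}$, and $p^{-(2r+1)^2i_p}$.

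It then remains to recognize the surviving inner sum as a value of $\zeta$. In the unramified and split cases $\det B'=p^{r}\det B_2$, so ${\rm ord}(\det B')=r+{\rm ord}(\det B_2)$, which yields the factor $(tX^{-1})^{r}$ and turns the inner sum into $\zeta_r(d_0,tX^{-1})$; the determinant condition on $B'$ transfers to the one defining ${\mathcal F}_{r,p,*}(d_0)$ precisely because $N_{K_p/{\bf Q}_p}({\mathcal O}_p^*)={\bf Z}_p^*$ there. In the ramified case $\det\Theta_{m-2r}=(-1)^{(m-2r)/2}\pi^{i_p(m-2r)/2}$ and $\det(\pi^{i_p}B_2)=\pi^{i_p\dim B_2}\det B_2$, so the combined $\pi$-exponent gives the power $(tX^{-1})^{(m/2+r)i_p}$ (resp. $(tX^{-1})^{((m+1)/2+r)i_p}$), while the sign $(-1)^{(m-2r)/2}=(-1)^{m/2-r}$ shifts the determinant class and converts the inner sum into $\zeta_{2r}((-1)^{m/2-r}d_0,tX^{-1})$ (resp. $\zeta_{2r+1}((-1)^{(m-2r-1)/2}d_0,tX^{-1})$). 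Reassembling the pulled-out factors over all $r$ gives the four stated formulas.

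The hard part will be the ramified case. There I must keep exact track of the sign $(-1)^{m/2-r}$, which reindexes the character $\chi_{K_p}((-1)^{\bullet}d_0)$ built into $\zeta$ by the Corollary to Proposition 4.3.4, of the $i_p$-dependent powers of $\pi$ that appear simultaneously in $\det B'$ and in the scaling of the density, and of the $p=2$ anomalies recorded by $i_p$ and $\delta_{2,p}$. I would also have to check that the normal-form parametrization is genuinely a bijection on $GL_m({\mathcal O}_p)$-orbits, and that, when $K_p$ is ramified, the properness of $N_{K_p/{\bf Q}_p}({\mathcal O}_p^*)$ in ${\bf Z}_p^*$ is what forces the determinant class of $B_2$ to carry the sign $(-1)^{m/2-r}$ rather than having it absorbed.
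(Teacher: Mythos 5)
Your argument is correct and is essentially the paper's own proof: the author disposes of this proposition by citing the Corollary to Lemma 4.2.2 together with Proposition 4.2.4, and your stratification by the Jacobowitz normal form combined with Lemma 4.2.1 and Lemma 4.1.5 is exactly the content of Proposition 4.2.4 unpacked, with the same determinant and sign bookkeeping in the ramified case.
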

 
 \begin{proof} 
 The assertions can be proved by using Corollary to Lemma 4.2.2 and  Proposition 4.2.4 (cf. [\cite{I-K3}, Proposition 3.1]). 
\end{proof}

\bigskip

It is well known that $\#({\bf Z}_p^*/N_{K_p/{\bf Q}_p}({\mathcal O}_p^*))=2$ if $K_p/{\bf Q}_p$ is ramified. Hence we can take a complete set ${\mathcal N}_p$ of representatives of ${\bf Z}_p^*/N_{K_p/{\bf Q}_p}({\mathcal O}_p^*)$ so that ${\mathcal N}_p=\{1,\xi_0\}$ with $\chi_{K_p}(\xi_0)=-1.$

 \begin{proof}[{\bf Proof of Theorem 4.3.1.}]
(1)   By Corollary to Proposition 4.3.4 and Proposition 4.3.5, we have 
$$K_m(d_0,X,t)={1 \over \phi_m(-p^{-1})}{L_m(d_0,X,t) \over \prod_{i=1}^m (1+(-1)^ip^{-i}X^{-1}t)},$$
where $L_m(d_0,X,t)$ is a polynomial in $t$ of degree $m.$
Hence 
$$P_m(d_0,X,t)={1 \over \phi_m(-p^{-1})}{L_m(d_0,X,t) \over \prod_{i=1}^m (1+(-1)^ip^{-i}X^{-1}t)\prod_{i=1}^m (1-p^{-2i}X^2t^2) }.$$
We have
$$\widetilde F(B,-X^{-1})=\widetilde F(B,X)$$ 
for any $B \in \widetilde F_p^{(0)}(B,X).$ Hence we have
$$P_m(d_0,-X^{-1},t)=P_m(d_0,X,t),$$
and therefore the denominator of the rational function $P_m(d_0,X,t)$ in $t$ is at most 
$$\prod_{i=1}^m (1+(-1)^ip^{-i}X^{-1}t)\prod_{i=1}^m (1-(-1)^ip^{-i}Xt).$$
Thus
$$P_m(d_0,X,t)={a \over \phi_m(-p^{-1}) \prod_{i=1}^m (1+(-1)^ip^{-i}X^{-1}t)\prod_{i=1}^m (1-(-1)^ip^{-i}Xt)},$$
with some constant $a.$ It is easily seen that we have $a=1.$ This proves the assertion. 

(2) The assertion can be proved by using the same argument as above.

(3) By Corollary to Proposition 4.3.4 and Proposition 4.3.5, we have
$$K_m(d,X,t)$$
$$= {1 \over 2}\left\{ { L^{(0)}(X,t)  \over \prod_{i=1}^{m/2} (1-p^{-2i+1} X^{-1}t)} + {\chi_{K_p}((-1)^{m/2}d_0)L^{(1)}(X,t)   \over \prod_{i=1}^{m/2} (1-p^{-2i} X^{-1}t)} \right\}$$
with some polynomials $L^{(0)}(X,t)$ and $L^{(1)}(X,t)$ in $t$ of degrees at most  $m.$ 
Thus we have
$$P_m(d,X,t)$$
$$= {1 \over 2}\left\{ { L^{(0)}(X,t)  \over \prod_{i=1}^{m/2} (1-p^{-2i+1} X^{-1}t)\prod_{i=1}^{m} (1-p^{-i} Xt)} + {\chi_{K_p}((-1)^{m/2}d_0)L^{(1)}(X,t)   \over \prod_{i=1}^{m/2} (1-p^{-2i} X^{-1}t)\prod_{i=1}^{m} (1-p^{-i} Xt) } \right\}.$$
For $l=0,1$ put 
$$P_{m}^{(l)}(X,t)={1 \over 2} \sum_{d \in {\mathcal N}_p} \chi_{K_p}((-1)^{m/2}d)^lP_m(d,X,t).$$
Then 
$$P_m^{(0)}(X,t)={L^{(0)}(X,t) \over 2\phi_{m/2}(p^{-2})}{ 1 \over \prod_{i=1}^{m/2} (1-p^{-2i+1} X^{-1}t)\prod_{i=1}^{m} (1-p^{-i} Xt) } ,$$
and
$$P_m^{(1)}(X,t)={L^{(1)}(X,t) \over 2\phi_{m/2}(p^{-2})}{ 1 \over \prod_{i=1}^{m/2} (1-p^{-2i} X^{-1}t)\prod_{i=1}^{m} (1-p^{-i} Xt) }.$$
Then by the functional equation of Siegel series we have
$$P_m(d,X^{-1},t)=\chi_{K_p}((-1)^{m/2}d)P_m(d,X,t)$$
for any $d \in {\mathcal N}_p.$ Hence we have
$$P_m^{(0)}(X^{-1},t)=P_m^{(1)}(X,t).$$
Hence the reduced denominator of the rational function $P_m^{(0)}(X,t)$ in $t$ is at most 
$$\prod_{i=1}^{m/2} (1-p^{-2i+1} X^{-1}t)\prod_{i=1}^{m/2} (1-p^{-2i} Xt),$$
and similarly to (1) we have
$$P_m^{(0)}(X,t)= {1 \over 2\phi_{m/2}(p^{-2})\prod_{i=1}^{m/2} (1-p^{-2i+1} X^{-1}t)\prod_{i=1}^{m/2} (1-p^{-2i} Xt)  }.$$
Similarly
$$P_m^{(1)}(X,t)= {1 \over 2\phi_{m/2}(p^{-2})\prod_{i=1}^{m/2} (1-p^{-2i} X^{-1}t)\prod_{i=1}^{m/2} (1-p^{-2i+1} Xt)  }.$$
We have
$$P_m(d_0,X,t)=P_m^{(0)}(X,t)+\chi_{K_p}((-1)^{m/2}d_0)P_m^{(1)}(X,t).$$
This proves the assertion.

\end{proof}

\bigskip

\begin{proof}
[{\bf Proof of Theorem 4.3.2.}]
The assertion can also be proved by using the same argument as above.
\end{proof}

\begin{thms}
Let $d_0 \in {\bf Z}_p^*.$ 

\noindent
{\rm (1)}  Suppose that  $K_p$ is unramified  over ${\bf Q}_p$ or that $K_p={\bf Q}_p \oplus {\bf Q}_p.$ Then
$$\hat P_{m}(d_0,X,t)=P_{m}(d_0,X,t)$$
for any $m >0.$

\noindent
{\rm (2)}  Suppose that  $K_p$ is ramified  over ${\bf Q}_p.$   Then
$$\hat P_{2n+1}(d_0,X,t)=P_{2n+1}(d_0,X,t)$$
and 
$$\hat P_{2n}(d_0,X,t)={ 1 \over 2\phi_{n}(p^{-2})}$$
$$\times \left\{{t^{n i_p } \over \prod_{i=1}^{n} (1-t p^{-2i+1}X^{-1})(1-tp^{-2i}X)} + {\chi_{K_p}((-1)^{n}d_0)  (t \chi_{K_p}(p))^{ni_p} \over \prod_{i=1}^{n} (1-t p^{-2i}\chi_{K_p}(p)X^{-1})(1-tp^{-2i+1}\chi_{K_p}(p)X)}\right \}.$$

\end{thms}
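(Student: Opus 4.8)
The plan is to deduce Theorem 4.3.6 from the explicit formulas for $P_m(d_0,X,t)$ already obtained in Theorems 4.3.1 and 4.3.2, by keeping track of how replacing the powers $\pi^i$ by $p^i$ alters the norm class of the determinant.

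For part (1) there is nothing to do beyond unwinding definitions: when $K_p$ is unramified or $K_p={\bf Q}_p \oplus {\bf Q}_p$ the normalizations fixed at the beginning of this section give $\pi=p$, so $\hat P_m(d_0,X,t)=\sum_i \lambda_m^*(p^id_0,X)t^i=\sum_i \lambda_m^*(\pi^id_0,X)t^i=P_m(d_0,X,t)$. For the odd ramified case, Theorem 4.3.2 (3) shows that $P_{2n+1}(d_0,X,t)$ does not depend on $d_0$; comparing the two representatives $d_0\in\{1,\xi_0\}$ of ${\bf Z}_p^*/N_{K_p/{\bf Q}_p}({\mathcal O}_p^*)$ then forces $\lambda_{2n+1}^*(\pi^i,X)=\lambda_{2n+1}^*(\pi^i\xi_0,X)$ for all $i$, i.e. $\lambda_{2n+1}^*(d,X)$ depends only on ${\rm ord}(d)$. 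Since $p^id_0$ and $\pi^id_0$ have the same valuation $i$, the coefficients of $\hat P_{2n+1}$ and $P_{2n+1}$ coincide.

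The substance is the even ramified case. Here $\lambda_{2n}^*(d,X)$ depends on $d$ only through ${\rm ord}(d)$ and the class of $d$ in ${\bf Z}_p^*/N_{K_p/{\bf Q}_p}({\mathcal O}_p^*)$, since the index set $\widetilde{\rm Her}_m(dN_{K_p/{\bf Q}_p}({\mathcal O}_p^*),{\mathcal O}_p)$ only sees this coset. Writing $p=\pi u$ with $u\in{\bf Z}_p^*$ and using $\pi=N_{K_p/{\bf Q}_p}(\varpi)$, we get $\chi_{K_p}(\pi)=1$ and hence $\chi_{K_p}(u)=\chi_{K_p}(p)$. If $\chi_{K_p}(p)=1$ then $u^i$ is a norm for every $i$, so $\lambda_{2n}^*(p^id_0,X)=\lambda_{2n}^*(\pi^id_0,X)$ and $\hat P_{2n}=P_{2n}$, which is exactly the asserted formula when $\chi_{K_p}(p)=1$. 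If $\chi_{K_p}(p)=-1$, then $u^i$ is a norm for $i$ even and a non-norm for $i$ odd, so $\lambda_{2n}^*(p^id_0,X)=\lambda_{2n}^*(\pi^id_0,X)$ for $i$ even and $=\lambda_{2n}^*(\pi^i\xi_0d_0,X)$ for $i$ odd. Thus $\hat P_{2n}(d_0,X,t)$ is the even-in-$t$ part of $P_{2n}(d_0,X,t)$ plus the odd-in-$t$ part of $P_{2n}(\xi_0 d_0,X,t)$, namely
$$\hat P_{2n}(d_0,X,t)=\tfrac12\bigl(P_{2n}(d_0,X,t)+P_{2n}(d_0,X,-t)\bigr)+\tfrac12\bigl(P_{2n}(\xi_0d_0,X,t)-P_{2n}(\xi_0d_0,X,-t)\bigr).$$

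Finally I would substitute the formula of Theorem 4.3.1 (3). Abbreviating its two denominators by $A(X,t)=\prod_{i=1}^{n}(1-tp^{-2i+1}X^{-1})(1-tp^{-2i}X)$ and $B(X,t)=\prod_{i=1}^{n}(1-tp^{-2i}X^{-1})(1-tp^{-2i+1}X)$, and writing $c=\chi_{K_p}((-1)^nd_0)$ (so that $\chi_{K_p}((-1)^n\xi_0d_0)=-c$), the $A$-terms reinforce and the $c/B$-terms cancel in the first bracket, while the reverse happens in the second; after dividing by the common factor $2\phi_n(p^{-2})$ one is left with $\frac{1}{2\phi_n(p^{-2})}\{t^{ni_p}/A(X,t)+c(-t)^{ni_p}/B(X,-t)\}$. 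Since $\chi_{K_p}(p)=-1$ here, $-t=\chi_{K_p}(p)t$, so the second summand equals $c(\chi_{K_p}(p)t)^{ni_p}/B(X,\chi_{K_p}(p)t)$, which is precisely the second term in the claimed formula. The only place demanding care is the norm-class bookkeeping in the third paragraph — verifying that $\lambda_{2n}^*$ really factors through ${\rm ord}(d)$ together with the norm coset and that $\chi_{K_p}(u)=\chi_{K_p}(p)$; once this is in hand, the remaining manipulation is a routine even/odd extraction.
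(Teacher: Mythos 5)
Your proposal is correct and follows essentially the same route as the paper: the paper likewise disposes of (1) and the odd ramified case by noting the formulas are independent of the unit part of $d$, and for $m=2n$ ramified it performs the identical even/odd decomposition in $t$, using that $p^i\pi^{-i}$ lies in $N_{K_p/{\bf Q}_p}({\mathcal O}_p^*)$ exactly when $\chi_{K_p}(p)^i=1$, before substituting Theorem 4.3.1(3) (organized there through the auxiliary series $P_{2n}^{(0)}$ and $P_{2n}^{(1)}$ rather than your direct $A$, $B$ bookkeeping). The only caveat is that your phrase about terms ``reinforcing/cancelling within each bracket'' is loose --- the cancellation happens only after the two brackets are summed --- but your displayed identity and final formula are right.
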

\begin{proof}
The assertion (1) is clear from the definition. We note that 
$P_{m}(d_0,X,t)$ does not depend on the choice of $\pi.$ 
Suppose that $K_p$ is ramified  over ${\bf Q}_p.$ If $m=2n+1,$ then it follows from (3) of Theorem 4.3.2 that we have
$$\lambda^*_{m,p}(\pi^id,X) =\lambda^*_{m,p}(\pi^i,X)$$
for any $d \in {\bf Z}_p^*,$ and in particular we have
$$\lambda^*_{m,p}(p^id_0,X) =\lambda^*_{m,p}(\pi^i,X).$$
This proves the assertion. Suppose that $m=2n.$ Write $\hat P_{2n}(d_0,X,t)$ as
$$\hat P_{2n}(d_0,X,t)=\hat P_{2n}(d_0,X,t)_{even}+\hat P_{2n}(d_0,X,t)_{odd}, $$
where
$$\hat P_{2n}(d_0,X,t)_{even}={1 \over 2}\{\hat P_{2n}(d_0,X,t)+\hat P_{2n}(d_0,X,-t)\},$$
and
$$\hat P_{2n}(d_0,X,t)_{odd}={1 \over 2}\{\hat P_{2n}(d_0,X,t)-\hat P_{2n}(d_0,X,-t)\}.$$
We have
$$\hat P_{2n}(d_0,X,t)_{even}=\sum_{i=0}^{\infty} \lambda^*_{2n,p}(p^{2i}d_0,X,Y)t^{2i}=\sum_{i=0}^{\infty} \lambda^*_{2n,p}(\pi^{2i}d_0,X,Y)t^{2i}$$
and
$$\hat P_{2n}(d_0,X,t)_{odd}=\sum_{i=0}^{\infty} \lambda^*_{2n,p}(p^{2i+1}d_0,X)t^{2i+1}
=\sum_{i=0}^{\infty} \lambda^*_{2n,p}(\pi^{2i+1}d_0 \pi p^{-1},X)t^{2i+1}.$$
Hence we have
$$\hat P_{2n}(d_0,X,t)_{even}={1 \over 2}\{P_{2n}(d_0,X,t)+ P_{2n}(d_0,X,-t)\},$$
and
$$\hat P_{2n}(d_0,X,Y,t)_{odd}={1 \over 2}\{P_{2n}(d_0 \pi p^{-1} ,X,t)-P_{2n}(d_0 \pi p^{-1},X,-t)\},$$
and hence we have
$$\hat P_{2n}(d_0,X,t)=P_{2n}^{(0)}(d_0,X,t)+{1 \over 2} (1+\chi_{K_p}(\pi p^{-1})) \chi_{K_p}((-1)^nd_0) P_{2n}^{(1)}(d_0,X,t)$$
$$+{1 \over 2} (1-\chi_{K_p}(\pi p^{-1})) \chi_{K_p}((-1)^nd_0) P_{2n}^{(1)}(d_0,X,-t).$$
Assume that $\chi_{K_p}(\pi p^{-1})=1.$ Then $\chi(d_0 \pi p^{-1})=\chi(d_0),$ and we have
$$\hat P_{2n}(d_0,X,t)=P_{2n}(d_0,X,t).$$
Suppose that $\chi_{K_p}(\pi p^{-1})=-1.$ Then $\chi(d_0 \pi p^{-1})=-\chi(d_0),$ and we have
$$\hat P_{2n}(d_0,X,t)=P_{2n}^{(0)}(d_0,X,t)+ \chi_{K_p}((-1)^nd_0) P_{2n}^{(1)}(d_0,X,-t)$$
Since $\pi \in N_{K_p/{\bf Q}_p}(K_p^{\times}),$ we have  $\chi_{K_p}(\pi p^{-1})=\chi_{K_p}(p).$ 
 This proves the assertion.
\end{proof}
\begin{xcor}    
Let $m=2n$ be even. Suppose that $K_p$ is ramified over ${\bf Q}_p.$ 
For $l=0,1$ put
$$\hat P_{2n}^{(l)}(X,t)={1 \over 2}\sum_{d \in {\mathcal N}_p} \chi_{K_p}((-1)^nd)^l \hat P_{2n}(d,X,t).$$
Then we have
$$\hat P_{2n}(d,X,t)={1 \over 2} (\hat P_{2n}^{(0)}(X,t)+\chi_{K_p}((-1)^nd)\hat P_{2n}^{(1)}(X,t)),$$
and 
 $$\hat P_{2n}^{(0)}(X,t)=P_{2n}^{(0)}(X,t),$$
and
$$\hat P_{2n}^{(1)}(X,t)=P_{2n}^{(1)}(X,\chi_{K_p}(p) t),$$
\end{xcor}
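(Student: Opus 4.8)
The plan is to deduce all three identities from the closed formula for $\hat P_{2n}(d,X,t)$ obtained at the very end of the proof of Theorem 4.3.6. Substituting the relation $\chi_{K_p}(\pi p^{-1})=\chi_{K_p}(p)$ recorded there, that formula reads
$$\hat P_{2n}(d,X,t)=P_{2n}^{(0)}(X,t)+\tfrac12\bigl(1+\chi_{K_p}(p)\bigr)\chi_{K_p}((-1)^nd)P_{2n}^{(1)}(X,t)+\tfrac12\bigl(1-\chi_{K_p}(p)\bigr)\chi_{K_p}((-1)^nd)P_{2n}^{(1)}(X,-t),$$
where $P_{2n}^{(0)}(X,t)$ and $P_{2n}^{(1)}(X,t)$ are the $d$-independent functions introduced in the proof of Theorem 4.3.1. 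The only external input I would use is the orthogonality of characters on the order-two group ${\bf Z}_p^{*}/N_{K_p/{\bf Q}_p}({\mathcal O}_p^{*})$: since ${\mathcal N}_p=\{1,\xi_0\}$ with $\chi_{K_p}(\xi_0)=-1$, one has $\sum_{d\in{\mathcal N}_p}\chi_{K_p}((-1)^nd)=0$, while $\chi_{K_p}((-1)^nd)^2=1$ for each $d$.

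For the first identity I would simply invert the two defining sums of $\hat P_{2n}^{(0)}$ and $\hat P_{2n}^{(1)}$ over the two-element set ${\mathcal N}_p$; using $\chi_{K_p}((-1)^nd)^2=1$, the combination $\hat P_{2n}^{(0)}(X,t)+\chi_{K_p}((-1)^nd)\hat P_{2n}^{(1)}(X,t)$ recovers the single summand $\hat P_{2n}(d,X,t)$, which is the asserted decomposition. This is nothing more than finite Fourier inversion on a group of order two.

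For $\hat P_{2n}^{(0)}(X,t)=P_{2n}^{(0)}(X,t)$ I would plug the displayed formula into $\hat P_{2n}^{(0)}(X,t)=\tfrac12\sum_{d\in{\mathcal N}_p}\hat P_{2n}(d,X,t)$. The two terms carrying the factor $\chi_{K_p}((-1)^nd)$ vanish upon summation by the orthogonality relation, whereas the term $P_{2n}^{(0)}(X,t)$, being independent of $d$, survives with total weight $\tfrac12\cdot 2=1$.

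Finally, for $\hat P_{2n}^{(1)}(X,t)=P_{2n}^{(1)}(X,\chi_{K_p}(p)t)$ I would substitute the formula into $\hat P_{2n}^{(1)}(X,t)=\tfrac12\sum_{d\in{\mathcal N}_p}\chi_{K_p}((-1)^nd)\hat P_{2n}(d,X,t)$. Now the $P_{2n}^{(0)}$ contribution is killed by orthogonality, and $\chi_{K_p}((-1)^nd)^2=1$ reduces the remaining two terms to $\tfrac12\bigl[(1+\chi_{K_p}(p))P_{2n}^{(1)}(X,t)+(1-\chi_{K_p}(p))P_{2n}^{(1)}(X,-t)\bigr]$. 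Since $\chi_{K_p}(p)\in\{1,-1\}$, exactly one of $1\pm\chi_{K_p}(p)$ equals $2$ and the other is $0$, so the expression collapses to $P_{2n}^{(1)}(X,\chi_{K_p}(p)t)$. I expect this last identity to be the only step needing attention, and even there the work is purely the bookkeeping of which parity survives; there is no genuine obstacle, the whole argument being the orthogonality of the two characters of an order-two group together with the observation that $\chi_{K_p}(p)=\pm1$ selects the sign in the substitution $t\mapsto\chi_{K_p}(p)t$.
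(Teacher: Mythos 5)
Your proposal is correct and follows essentially the same route as the paper: the corollary is extracted from the displayed formula at the end of the proof of Theorem 4.3.6 by finite Fourier inversion on the order-two group ${\bf Z}_p^{*}/N_{K_p/{\bf Q}_p}({\mathcal O}_p^{*})$, together with the identity $\chi_{K_p}(\pi p^{-1})=\chi_{K_p}(p)$, exactly as you do. Note only that your (correct) inversion identity $\hat P_{2n}(d,X,t)=\hat P_{2n}^{(0)}(X,t)+\chi_{K_p}((-1)^{n}d)\hat P_{2n}^{(1)}(X,t)$ differs from the printed statement by the spurious factor $\tfrac12$, which is evidently a typo in the paper, as one sees by comparing with the analogous $\tfrac12$-free inversion formula for $P_{m}^{(l)}(X,t)$ in the proof of Theorem 4.3.1.
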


\bigskip
The following result will be used to prove Theorems 2.3 and 2.4.
\begin{props}
Let $d \in {\bf Z}_p^{\times}.$ 
 Then we have 
$$\lambda_{m,p}^*(d,X)=u_p\lambda_{m,p}(d,X).$$ 
   \end{props}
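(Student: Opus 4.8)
The plan is to compare the two sums by analysing how a single $GL_m({\mathcal O}_p)$-equivalence class decomposes into $SL_m({\mathcal O}_p)$-classes of fixed determinant. The first thing I would record is that each of the three quantities $\widetilde F_p^{(0)}(A,X)$, $\alpha_p(A)$ and $l_{p,A}$ depends only on the $GL_m({\mathcal O}_p)$-class of $A$: the Siegel series and the local density $\alpha_p(A)$ are local invariants, and ${\rm ord}(\det A)$ is unchanged because $\det(A[g])=N_{K_p/{\bf Q}_p}(\det g)\det A$ with $\det g\in{\mathcal O}_p^*$; moreover if $A'=A[g]$ then ${\mathcal U}_{A'}(K_p)\cap GL_m({\mathcal O}_p)=g^{-1}\bigl({\mathcal U}_A(K_p)\cap GL_m({\mathcal O}_p)\bigr)g$, so that $\widetilde{U_{p,A'}}=\widetilde{U_{p,A}}$ and hence $l_{p,A'}=l_{p,A}$.

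Set $S_d=\{A\in\widetilde{\rm Her}_m({\mathcal O}_p)\ | \ \det A=d\}$. I would first note that $\widetilde{\rm Her}_m(dN_{K_p/{\bf Q}_p}({\mathcal O}_p^*),{\mathcal O}_p)/GL_m({\mathcal O}_p)$ is in bijection with the set of $GL_m({\mathcal O}_p)$-classes meeting $S_d$: if $\det A=dN_{K_p/{\bf Q}_p}(\varepsilon)$ with $\varepsilon\in{\mathcal O}_p^*$, then $g={\rm diag}(\varepsilon^{-1},1,\dots,1)$ lies in $GL_m({\mathcal O}_p)$ and $\det(A[g])=d$, so every class in the source contains a representative of determinant exactly $d$.

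The key step is to count, for a fixed $A\in S_d$, the $SL_m({\mathcal O}_p)$-classes inside $(GL_m({\mathcal O}_p)\cdot A)\cap S_d$. Writing $H=\{g\in GL_m({\mathcal O}_p)\ | \ N_{K_p/{\bf Q}_p}(\det g)=1\}$, this intersection is $\{A[g]\ | \ g\in H\}$, and I would prove that $A[g_1]\sim_{SL_m({\mathcal O}_p)}A[g_2]$ precisely when $\det g_1\equiv\det g_2\bmod\widetilde{U_{p,A}}$. Indeed $A[g_1 s]=A[g_2]$ for some $s\in SL_m({\mathcal O}_p)$ forces $h:=g_1 s g_2^{-1}\in{\mathcal U}_A(K_p)\cap GL_m({\mathcal O}_p)$, whence $\det g_1(\det g_2)^{-1}=\det h\in\widetilde{U_{p,A}}$; conversely, if $\det g_1(\det g_2)^{-1}=\det a$ with $a\in{\mathcal U}_A(K_p)\cap GL_m({\mathcal O}_p)$, then $s=g_1^{-1}a g_2$ lies in $SL_m({\mathcal O}_p)$ and satisfies $A[g_1 s]=A[g_2]$. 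Since $\det\colon H\to U_{1,p}$ is surjective (take $g={\rm diag}(\varepsilon,1,\dots,1)$ for $\varepsilon\in U_{1,p}$), these classes are parametrised by $U_{1,p}/\widetilde{U_{p,A}}$, so there are exactly $l_{p,A}=[U_{1,p}:\widetilde{U_{p,A}}]$ of them.

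Finally I would regroup the sum defining $\lambda_{m,p}(d,X)$ according to $GL_m({\mathcal O}_p)$-classes. Each class with a representative $A$ of determinant $d$ contributes $l_{p,A}$ equal terms $\widetilde F_p^{(0)}(A,X)/(u_p l_{p,A}\alpha_p(A))$, whose sum is $\widetilde F_p^{(0)}(A,X)/(u_p\alpha_p(A))$; summing over all such classes yields $u_p^{-1}\lambda_{m,p}^*(d,X)$, which is the assertion. The argument is uniform in $p$, and the main obstacle is precisely the orbit count of the third paragraph — the surjectivity of $\det$ on $H$ together with the determinant criterion for $SL_m({\mathcal O}_p)$-equivalence, identifying the fibre with $U_{1,p}/\widetilde{U_{p,A}}$; once this is in place, the rest is bookkeeping in which the factor $u_p$ merely factors out.
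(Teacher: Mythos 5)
Your proposal is correct and follows essentially the same route as the paper: the paper introduces the group $GL_m({\mathcal O}_p)_1=\{U\in GL_m({\mathcal O}_p)\mid \overline{\det U}\det U=1\}$ (your $H$), notes the bijection between $\widetilde{\rm Her}_m(d,{\mathcal O}_p)/GL_m({\mathcal O}_p)_1$ and $\widetilde{\rm Her}_m(dN_{K_p/{\bf Q}_p}({\mathcal O}_p^*),{\mathcal O}_p)/GL_m({\mathcal O}_p)$, and asserts that each such class splits into exactly $l_{p,T}$ classes under $SL_m({\mathcal O}_p)$. Your determinant criterion for $SL_m({\mathcal O}_p)$-equivalence and the surjectivity of $\det$ on $H$ simply supply the details the paper leaves implicit.
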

   
 \begin{proof}
 Let $I$ be the left-hand side of the above equation.  Let 
 $$GL_m({\mathcal O}_p)_1=\{U \in GL_m({\mathcal O}_p) \ | \ \overline{\det U} \det U=1 \}.$$ Then there exists a bijection from  $\widetilde {\rm Her}_{m}(d,{\mathcal O}_p)/GL_m({\mathcal O}_p)_1$ to \\
 $\widetilde {\rm Her}_{m}(dN_{K_p/{\bf Q}_p}({\mathcal O}_p^*),{\mathcal O}_p)/GL_m({\mathcal O}_p).$ Hence 
$$I=\sum_{A \in {\widetilde{\rm Her}}_m(d,{\mathcal O}_p)/GL_{m}({\mathcal O}_p)_1}  {\widetilde F_p^{(0)}(A,X) \over \alpha_p(A)}.$$
Now for $T \in \widetilde {\rm Her}_{m}(d,{\mathcal O}_p),$  let $l$ be the number of $SL_m({\mathcal O}_p)$-equivalence classes in $\widetilde {\rm Her}_{m}(d,{\mathcal O}_p)$ which are $GL_m({\mathcal O}_p)$-equivalent to $T$. Then it can easily be shown that 
$l=l_{p,T}.$
 Hence the assertion holds. 
 \end{proof}

\section{Proof of the main theorem}

{\bf Proof of Theorem 2.3.} For a while put $\lambda_p^*(d)=\lambda_{m,p}^*(d,\alpha_p^{-1}).$
 Then by Theorem 3.4 and Proposition 4.3.7, we have
$$L(s,I_{2n}(f))=\mu_{2n,k,D}\sum_{d} \prod_p (u_p^{-1}\lambda_p^*(d)) d^{-s+k+2n}.$$
Then by (1) and (2) of Theorem 4.3.1, and (1) of Theorem 4.3.6, $\lambda_p^*(d)$  depends only on $p^{{\rm ord}_p(d)}$ if $p \nmid D.$ Hence we write $\lambda_p^*(d)$ as $\widetilde \lambda_p(p^{{\rm ord}_p(d)}).$
 On the other hand, if $p \mid D,$ by (3) of Theorem 4.3. and , (2) of Theorem 4.3.6, 
$\lambda_p^*(d)$ can be expressed as 
$$\lambda_p^*(d)=\lambda_p^{(0)}(d)+ \chi_{K_p}((-1)^nd p^{-{\rm ord}_p(d)})\lambda_p^{(1)}(d),$$
where $\lambda_p^{(l)}(d)$ is a rational number depending only on $p^{{\rm ord}_p(d)}$ for $l=0,1.$ Hence we write $\lambda_p^{(l)}(d)$ as $\widetilde \lambda_p^{(l)}(p^{{\rm ord}_p(d)}).$
Then we have  
$$b_m(f;d)=\sum_{Q \subset Q_D} \prod_{p|d, p \nmid D} \left(u_p^{-1}\widetilde \lambda_{p}(p^{{\rm ord}_p(d)})\prod_{q \in Q} \chi_{K_q}(p^{{\rm ord}_p(d)})\right)$$
$$ \times \prod_{p|d, p \mid D,p \not\in Q} \left(u_p^{-1}\widetilde \lambda_{p}^{(0)}(p^{{\rm ord}_p(d)})\prod_{q \in Q} \chi_{K_q}(p^{{\rm ord}_p(d)})\right)$$
$$ \times \prod_{p|d, p \in Q} \left(u_p^{-1}\widetilde \lambda_{p}^{(1)}(p^{{\rm ord}_p(d)})\prod_{q \in Q, q \not=p} \chi_{K_q}(p^{{\rm ord}_p(d)})\right) 
\prod_{q \in Q} \chi_{K_q}((-1)^n))$$
for a positive integer $d.$ 
We note that for a subset $Q$ of $Q_D$ we have 
$$\chi_Q(m)=\prod_{q \in Q} \chi_{K_q}(m)$$ for an integer $m$ coprime to any $q \in Q,$ and
$$\chi_{Q}'(p)=\chi_{K_p}(p)\prod_{q \in Q, q \not=p} \chi_{K_q}(p)$$
for any $p \in Q.$ 
Hence, by Theorems 4.3.1 and 4.3.6, and Corollary to Theorem 4.3.6, we have 
$$L(s,I_{2n}(f)) =\mu_{2n,k,D}\sum_{Q \subset Q_D} \prod_{p \nmid D} \sum_{i=0}^{\infty} u_p^{-1}\widetilde \lambda_p(p^i)\chi_Q(p^i)p^{(-s+k+2n)i}$$
$$\times \prod_{p \mid D, p \not\in Q} \sum_{i=0}^{\infty} u_p^{-1}\widetilde\lambda_p^{(0)}(p^i)\chi_Q(p^i)p^{(-s+k+2n)i} \chi_Q((-1)^n)$$
$$ \times  \prod_{p \in Q}\sum_{i=0}^{\infty} u_p^{-1}\widetilde\lambda_p^{(1)}(p^i)\left(\prod_{q \in Q, q \not=p} \chi_{K_q}(p^i) \right)p^{(-s+k+2n)i}.$$
$$=\mu_{2n,k,D}\sum_{Q \subset Q_D} \chi_Q((-1)^n)\prod_{p \nmid D} (u_p^{-1}P_{2n,p}(1,\alpha_p^{-1},\chi_Q(p)p^{-s+k+2n}))$$
$$\times \prod_{p \mid D, p \not\in Q} (u_p^{-1}P_{2n,p}^{(0)}(\alpha_p^{-1},\chi_Q(p)p^{-s+k+2n})) \prod_{p \in Q} (u_p^{-1}P_{2n,p}^{(1)}(\alpha_p^{-1},\chi_Q'(p)p^{-s+k+2n})).$$
Now for $l=0,1$ write 
$P_{2n,p}^{(l)}(X,t)$ as 
$$P_{2n,p}^{(l)}(X,t)=t^{ni_p}\widetilde {P}_{2n,p}^{(l)}(X,t),$$
where $i_p=0$ or $1$ according as $4 || D$ and $p=2,$ or not. Notice that $u_p=(1-\chi(p)p^{-1})^{-1}$ if $p \nmid D$ and $u_p=2^{-1}$ if $p | D.$   Hence  we have 
$$L(s,I_{2n}(f)) =\mu_{2n,k,D}\sum_{Q \subset Q_D} \chi_Q((-1)^n) $$
$$ \times \prod_{p \in Q_D'} p^{(-s+k+2n)n} (\prod_{p \in Q_D, p \not\in Q}\chi_Q(p) \prod_{p \in  Q } \chi_Q'(p))^n$$
$$ \times \prod_{p \nmid D} ((1-\chi(p)p^{-1})P_{2n,p}(1,\alpha_p^{-1},\chi_Q(p)p^{-s+k+2n}))$$
$$\times \prod_{p \mid D, p \not\in Q} (2\widetilde P_{2n,p}^{(0)}(\alpha_p^{-1},\chi_Q(p)p^{-s+k+2n})) \prod_{p \in Q} (2\widetilde P_{2n,p}^{(1)}(\alpha_p^{-1},\chi_Q'(p)p^{-s+k+2n})),$$
where $Q_D'=Q_D \backslash \{2 \}$ or $Q_D$ according as $4 || D$ or not.
Note that 
$$2^{2c_D n(-s+k+2n)} \prod_{p \in Q_D'} p^{(-s+k+2n)n}=D^{(-s+k+2n)n},$$
and  $$\prod_{p \in Q_D, p \not\in Q}\chi_Q(p) \prod_{p \in Q } \chi_Q'(p)=1.$$
Thus the assertion follows  from Theorem  4.3.1. 

\bigskip
{\bf Proof of Theorem 2.4.}
The assertion follows directly from Theorems 3.4 and 4.3.2.

 \bigskip

\vfil

\noindent
Hidenori KATSURADA \\
Muroran Institute of Technology  27-1 Mizumoto, Muroran, 050-8585, Japan \\
E-mail: hidenori@mmm.muroran-it.ac.jp \\


\begin{thebibliography}{9999999}
\addcontentsline{toc}{section}{Bibliography}
\bibitem[BS87] {B-S}S. B{\"o}cherer and F. Sato, 
\textit{Rationality of certain formal power series related to local densities}, Comment. Math. Univ. St.Paul. {\bf 36} (1987), 53--86. 
\bibitem[Gri90]
{G} V. A. Gritsenko, \textit {The Maass space for ${\rm SU}(2,2)$. The Hecke ring, and zeta functions} (Russian), Translated in Proc. Steklov Inst. Math. 1991, no. 4, 75--86. \textit{Galois theory, rings, algebraic groups and their applications} (Russian), Trudy Mat. Inst. Steklov. 183 (1990), 68--78, 223-225.
\bibitem[IK04]
{I-K2}T. Ibukiyama and H. Katsurada,  
\textit{An explicit formula for Koecher-Maa{\ss} Dirichlet series for the Ikeda lifting}, 
Abh. Math. Sem. Hamburg {\bf 74} (2004), 101--121.
\bibitem[IK06] {I-K3}\underline{\hspace{2cm}},  \textit{Koecher-Maa{\ss} series for real analytic Siegel Eisenstein series}, Automorphic forms and zeta functions, 170--197, World Sci. Publ., Hackensack, NJ, 2006. 
\bibitem[IS95]
{I-S}T. Ibukiyama and H. Saito, 
\textit{On zeta functions associated to symmetric matrices. I. An explicit form of zeta functions}, 
Amer. J. Math. {\bf 117} (1995), 1097--1155. 
\bibitem[Ike01]
{Ik1}T. Ikeda, 
\textit{On the lifting of elliptic modular forms to 
Siegel cusp forms of degree $2n$}, 
Ann. of Math. {\bf 154} (2001), no. 3, 641--681. 
\bibitem[Ike08]
{Ik3}\underline{\hspace{2cm}}, \textit{On the lifting of hermitian modular forms},
Compositio Math. {\bf 144}(2008) 1107--1154.
\bibitem[Jac62]
{J}R. Jacobowitz, \textit{Hermitian forms over local fields}, Amer. J. Math. {\bf 84}(1962), 441--465.
\bibitem [Kat13] 
{Kat4} H. Katsurada, \textit{On the period  of the Ikeda lift for $U(m,m),$} Preprint 2013.
\bibitem [Kat14]
{Kat5} \underline{\hspace{2cm}}, \textit{Koecher-Maass series of the adelic Eisenstein series and the adelic Ikeda lift for  $U(m,m),$} To appear in Comment Math. St. Pauli.
\bibitem[Kit83]
{Ki0} Y. Kitaoka, \textit{A note on local densities of quadratic forms}, Nagoya Math. J. {\bf 92}(1983), 145--152.
\bibitem[Kit84]
{Ki1} \underline{\hspace{2cm}}, 
\textit{Dirichlet series in the theory of Siegel modular forms}, 
Nagoya Math. J. {\bf 95} (1984), 73--84. 
\bibitem[Kit93]
{Ki2} \underline{\hspace{2cm}}, 
\textit{Arithmetic of quadratic forms}, 
Cambridge Tracts in Mathematics, 106. Cambridge University Press, Cambridge, 1993
\bibitem[Koj82]
{Koj} H. Kojima, \textit {An arithmetic of Hermitian modular forms of degree two,} Invent. Math. {\bf 69}(1982), 217--227.
\bibitem[Kri91] 
{Kri} A. Krieg, \textit {The Maass spaces on the Hermitian half-space of degree 2,} Math. Ann. {\bf 289}(1991), 663--681.
\bibitem[Mi06]
{Mi} Y. Mizuno, \textit {An explicit formula of the Koecher-Maass series associated with hermitian modular forms belonging to the Maass space,}
Manuscripta Math. {\bf 119}(2006), 159--181.
\bibitem[Re71]
 {Re} U. Rehmann, \textit {Klassenzahlen einiger totaldefiniter klassischer Gruppen ueber Zahlkoerpern,} Dissertation, G\"ottingen,  1971
\bibitem[Sa97]
{Sa} H. Saito, \textit {Explicit formula of orbital $p$-adic zeta functions associated to symmetric and Hermitian matrices,}
 Comment. Math. Univ. St. Pauli {\bf 46} (1997), 175--216.
\bibitem[Sch85]{Sch}
W. Scharlau, \textit {Quadratic forms and Hermitian forms,} Springer -Verlag, Berlin Heidelberg New York Tokyo, 1985.
\bibitem[Sh97]
{Sh1} G. Shimura, \textit {Euler products and Eisenstein series}, CBMS Regional Conference Series in Math. 93(1997), Amer. Math. Soc.
\bibitem[Sh00]
{Sh2}\underline{\hspace{2cm}}, 
\textit{Arithmeticity in the theory of automorphic forms}, 
Mathematical Surveys and Monographs, 82, Amer. Math. Soc., 2000.
\bibitem[Su95]
{Su} T. Sugano, \textit{Jacobi forms and the theta lifting,} Comment Math. Univ. St. Pauli {\bf 44}(1995), 1--58.
\bibitem[We82]
{We} A. Weil, \textit{Adeles and algebraic groups}, Birkh\"auser, Boston, 1982.
\end{thebibliography}
\end{document}